\documentclass{amsart}

\usepackage[left=1.5in, right=1.5in]{geometry}     
\usepackage{graphicx}              
\usepackage{amsmath}               
\usepackage{amsfonts}              
\usepackage{amsthm}                
\usepackage{enumitem}
\usepackage{amssymb}
\usepackage{url}
\usepackage{color}
\usepackage{mathrsfs}
\usepackage{mathtools}
\usepackage{mathabx}
\usepackage{dsfont}

\numberwithin{equation}{section}

\newtheorem{thm}{Theorem}[section]
\newtheorem{lem}[thm]{Lemma}
\newtheorem{prop}[thm]{Proposition}
\newtheorem{cor}[thm]{Corollary}

\theoremstyle{definition}
\theoremstyle{definition}\newtheorem{example}[thm]{Example}
\theoremstyle{definition}\newtheorem*{remark}{Remark}

\newcommand{\norm}[1]{\left\lVert#1\right\rVert} 
\newcommand{\RR}{\mathbb{R}}

\newcommand{\NN}{\mathbb{N}}

\newcommand{\Sumn}{\displaystyle\sum}
\newcommand{\Suma}{\Sumn_{\alpha\in R_+}}
\newcommand*\diff{\mathop{}\!\mathrm{d}}

\newcommand{\al}{\alpha}
\newcommand{\alx}{\langle\alpha,x\rangle}

\newcommand{\IntN}{\displaystyle\int_{\RR^N}}
\newcommand{\1}{\mathds{1}}
\newcommand\restr[2]{{
  \left.\kern-\nulldelimiterspace  #1 
  \vphantom{\big|} 
  \right|_{#2} 
  }}

\begin{document}

\title[Logarithmic Sobolev inequalities for Dunkl operators]{Logarithmic Sobolev inequalities for Dunkl operators with applications to functional inequalities for singular Boltzmann-Gibbs measures}
\author{Andrei Velicu}
\address{Andrei Velicu, Department of Mathematics, Imperial College London, Huxley Building, 180 Queen's Gate, London SW7 2AZ, United Kingdom \textit{and} Institut de Math\'ematiques de Toulouse, Universit\'e Paul Sabatier, 118 route de Narbonne, 31062 Toulouse, France}
\email{andrei.velicu@math.univ-toulouse.fr}
\date{}

\subjclass[2010]{35A23, 26D10, 46N55, 60E15, 42B10, 43A32}
\keywords{Logarithmic Sobolev inequality, Poincar\'e inequality, Dunkl operators, Boltzmann-Gibbs measure, Concentration of measure}

\begin{abstract}
In this paper we study several inequalities of log-Sobolev type for Dunkl operators. After proving an equivalent of the classical inequality for the usual Dunkl measure $\mu_k$, we also study a number of inequalities for probability measures of Boltzmann type of the form $e^{-|x|^p}\diff\mu_k$. These are obtained using the method of $U$-bounds. Poincar\'e inequalities are obtained as consequences of the log-Sobolev inequality. The connection between Poincar\'e and log-Sobolev inequalities is further examined, obtaining in particular tight log-Sobolev inequalities. Finally, we study application to exponential integrability and to functional inequalities for a class of singular Boltzmann-Gibbs measures.
\end{abstract}

\maketitle

\section{Introduction}

The logarithmic Sobolev inequality (or log-Sobolev inequality, for short), on a general measure space $(\Omega, \mathcal{F}, \mu)$ with a quadratic form $Q$ defined on a suitable space of functions on $\Omega$, states that
\begin{equation} \label{euclideanlogsobolev}
\int_\Omega f^2 \log \frac{f^2}{\int_\Omega f^2 \diff \mu} \diff \mu
\leq C Q(f) + D \int_\Omega f^2 \diff \mu,
\end{equation}
for some constants $C$ and $D$. If $D=0$, we say that (\ref{euclideanlogsobolev}) is a tight log-Sobolev inequality. Although this inequality was used before, it was first explicitly recognised in Gross's seminal paper \cite{Gross}. His main result was the equivalence of log-Sobolev inequalities to hypercontractivity. For more information about the properties and uses of log-Sobolev inequalities, as well as some historical background, see \cite{BGL} and \cite{GZ} and references therein.

Dunkl operators are differential-difference operators which generalise the usual partial derivatives by including difference terms defined in terms of a finite reflection group. Although originally introduced to study special functions with certain symmetries, they have found other applications, for example in mathematical physics where they have been used to study Calogero-Moser-Sutherland (CMS) models of interacting particles. A short introduction to the theory of Dunkl operators is given below in Section \ref{SEC:intro}. More information about applications to CMS models can be found in \cite{vDV}, and an overview of their use in probability theory is contained in \cite{GRY}. 


We begin our study with a log-Sobolev inequality for the Dunkl measure $\mu_k$ which we prove using the Sobolev inequality for Dunkl operators and Jensen's inequality:
\begin{equation} \label{basiclogsob_intro}
\int_{\RR^N} f^2 \log \frac{f^2}{\int f^2 \diff\mu_k} \diff\mu_k 
\leq C_1 \int_{\RR^N} |\nabla_k f|^2 \diff\mu_k + C_2 \int_{\RR^N} f^2 \diff\mu_k.
\end{equation} 
Here $\mu_k$ is the Dunkl measure and $\nabla_k$ is the Dunkl gradient (see Section \ref{SEC:intro} for a definition of these terms and an introduction to Dunkl theory). This result will be the basis of many of the subsequent inequalities. Our main aim is to study functional inequalities for the Boltzmann-type probability measures
\begin{equation} \label{Boltzmann_meas}
\diff\mu_U = \frac{1}{Z} e^{-|x|^p} \diff\mu_k,
\end{equation}
where $Z$ is just a normalising constant. The strategy to prove (non-tight) log-Sobolev inequalities for such measures is to apply inequality \eqref{basiclogsob_intro} to a function with a suitable weight. This will indeed almost prove the inequality we desire, except for a few residual terms. In order to estimate these terms we use $U$-bounds, which were introduced in \cite{HZ} as part of powerful machinery to study quite general functional inequalities.

We also exploit the connection between the log-Sobolev and Poincar\'e inequalities. In general, it is known that the tight log-Sobolev inequality implies the Poincar\'e inequality. On the other hand, a non-tight log-Sobolev inequality, in the presence of a Poincar\'e inequality, can be improved to obtain a tight log-Sobolev inequality. We use these ideas both to produce new Poincar\'e inequalities for Dunkl operators, and to deduce tight log-Sobolev inequalities from our previous results.

Let us summarise our main results. Firstly, for $p>1$ and $\diff\mu_U$ defined by \eqref{Boltzmann_meas}, we shall prove the Poincar\'e inequality
$$\IntN \left| f-\IntN f \diff\mu_U \right|^2 \diff\mu_U \leq C \IntN |\nabla_k f|^2 \diff\mu_U.$$
For the same measures but for $p\geq 2$, we shall prove the tight log-Sobolev inequality
$$\IntN f^2 \log \frac{f^2}{\int f^2 \diff\mu_U} \diff\mu_U 
\leq C \IntN |\nabla_k f|^2 \diff\mu_U.$$
Such an inequality cannot hold for $1<p<2$, and in this range we shall prove a more general tight $\Phi$-Sobolev inequality
$$\IntN \Phi(f^2) \diff\mu_U - \Phi \left( \IntN f^2 \diff\mu_U \right)
\leq C \IntN |\nabla_k f|^2 \diff\mu_U,$$
where $\Phi(x)=x(\log(x+1))^2$ and $s=2\frac{p-1}{p}$. Finally, we also prove a generalised log-Sobolev inequality in $L^1$:
$$\IntN f \left|\log \frac{|f|}{\int |f| \diff\mu_U} \right|^s \diff\mu_U \leq C_1 \IntN |\nabla_k f| \diff\mu_U + C_2 \IntN |f| \diff\mu_U,$$
where $p\geq 1$ and $s=\frac{p-1}{p}$.

In terms of applications, we first prove exponential integrability of Lipschitz functions for probability measures of the form \eqref{Boltzmann_meas}, as well as a Gaussian measure concentration property for the same family of measures.

Finally, we also study applications of our inequalities to singular Boltzmann-Gibbs measures. A good expository paper on functional inequalities for such measures is \cite{CL}. In this paper, the probability measures in question are of the form $\frac{1}{Z} \1_D e^{-U} \diff x$, where $Z$ is a normalising constant, $\1_D$ is the indicator function of $D=\{ x\in \RR^N | x_1>x_2>\ldots >x_N\}$, and 
$$ U(x) = V(x) + \sum_{i<j} W(x_i-x_j).$$
In this notation, $V$ is the confinement potential and it is assumed to be  strongly convex, and $W:(0,\infty) \to \RR$ is the interaction potential, which is assumed to be convex. This setting can be naturally interpreted in terms of Dunkl theory. Indeed, the set $D$ corresponds to a Weyl chamber associated to the root system $A_{N-1}$, and the canonical choice $W(u)=-2k\log u$ produces exactly the Dunkl measure for the same root system:
$$e^{-\sum_{i<j} W(x_i-x_j)}=\prod_{i<j} (x_i-x_j)^{2k} =w_k(x).$$
Using this idea, from our results we obtain functional inequalities similar to those of \cite{CL}, for confinement potentials of the form $V(x)=|x|^p$. Note that in our case $V$ is not strongly-convex, so our results complement those of \cite{CL}. Moreover, our results hold for any root system and so they allow for different new types of interaction potentials; a discussion of examples corresponding to different root systems is contained below.

This paper is organised as follows. After a short introduction to Dunkl theory in Section \ref{SEC:intro}, we prove the main log-Sobolev inequality for the Dunkl measure $\mu_k$ in Section \ref{SEC:classicallogsob}. To illustrate the method of $U$-bounds, in the same section we also prove the log-Sobolev inequality for the Gaussian measure $e^{-|x|^2}\diff\mu_k$. More general $U$-bounds are proved in Section \ref{SEC:ubounds}, which we then apply in Section \ref{SEC:weightedlogsob} to obtain the desired (non-tight) log-Sobolev inequalities for Boltzmann measures. In Section \ref{poincaresection} we prove Poincar\'e inequalities which we then use in Section \ref{SEC:tightlogsob} to obtain tight log-Sobolev inequalities. Finally, in Section \ref{SEC:appl} we discuss applications to exponential integrability and singular Gibbs measures.

\section{Introduction to Dunkl theory}
\label{SEC:intro}

In this section we will present a very quick introduction to Dunkl operators. For more details see the survey papers \cite{Rosler} and \cite{Anker}.

A root system is a finite set $R\subset \RR^N\setminus \{0\}$ such that $R \cap \alpha \RR = \{ -\alpha, \alpha\}$ and $\sigma_\alpha(R) = R$ for all $\alpha\in R$. Here $\sigma_\alpha$ is the reflection in the hyperplane orthogonal to the root $\alpha$, i.e.,
$$ \sigma_\alpha x = x - 2 \frac{\alx}{\langle \alpha,\alpha \rangle} \alpha.$$
The group generated by all the reflections $\sigma_\alpha$ for $\alpha\in R$ is a finite group, and we denote it by $G$. 


Let $k:R \to [0,\infty)$ be a $G$-invariant function, i.e., $k(\alpha)=k(g\alpha)$ for all $g\in G$ and all $\alpha\in R$. We will normally write $k_\alpha=k(\alpha)$ as these will be the coefficients in our Dunkl operators. We can write the root system $R$ as a disjoint union $R=R_+\cup (-R_+)$, where $R_+$ and $-R_+$ are separated by a hyperplane through the origin and we call $R_+$ a positive subsystem;  this decomposition is not unique, but the particular choice of positive subsystem does not make a difference in the definitions below because of the $G$-invariance of the coefficients $k$.

The Weyl chambers associated to the root system $R$ are the connected components of $\RR^N \setminus \{x\in\RR^N : \alx =0 \text{ for some } \alpha\in R \}$. It can be checked that the reflection group $G$ acts simply transitively on the set of Weyl chambers so, in particular, the number of Weyl chambers equals the order of the group, $|G|$.

From now on we fix a root system in $\RR^N$ with positive subsystem $R_+$. We also assume without loss of generality that $|\alpha|^2=2$ for all $\al
\in R$. For $i=1,\ldots, N$ we define the Dunkl operator on $C^1(\RR^N)$ by
$$ T_i f(x) = \partial_i f(x) + \Suma k_\alpha \alpha_i \frac{f(x)-f(\sigma_\alpha x)}{\alx}.$$
We will denote by $\nabla_k=(T_1,\ldots, T_N)$ the Dunkl gradient, and $\Delta_k = \displaystyle\sum_{i=1}^N T_i^2$ will denote the Dunkl Laplacian. Note that for $k=0$ Dunkl operators reduce to partial derivatives, and $\nabla_0=\nabla$ and $\Delta_0=\Delta$ are the usual gradient and Laplacian.

We can express the Dunkl Laplacian in terms of the usual gradient and Laplacian using the following formula:
\begin{equation} \label{Dunkllaplacian}
\Delta_k f(x) = \Delta f(x) + 2\Suma k_\alpha \left[ \frac{\langle \nabla f(x),\alpha \rangle}{\alx} - \frac{f(x)-f(\sigma_\alpha x)}{\alx^2} \right].
\end{equation}

The weight function naturally associated to Dunkl operators is
$$ w_k(x) = \prod_{\alpha\in R_+} |\alx|^{2k_\alpha}.$$
This is a homogeneous function of degree
$$ \gamma := \Suma k_\alpha.$$
We will work in spaces $L^p(\mu_k)$, where $\diff\mu_k = w_k(x) \diff x$ is the weighted measure; the norm of these spaces will be written simply $\norm{\cdot}_p$. With respect to this weighted measure we have the integration by parts formula
$$ \IntN T_i(f) g \diff\mu_k = - \IntN f T_i(g) \diff\mu_k.$$

For any $f \in L^1_\text{loc}(\mu_k)$ we say that $T_if$ exists in a weak sense if there exists $g\in L^1_\text{loc}(\mu_k)$ such that
$$ \IntN f T_i\varphi \diff\mu_k = -\IntN g \varphi \diff\mu_k \qquad \forall \varphi \in C_c^\infty(\RR^N)$$
and we write $T_if=g$ (higher order derivatives are defined similarly). We can then define a Dunkl Sobolev space $W_k^{n,p}(\RR^N)$ for all $n\in\NN$ and $1\leq p \leq \infty$ as the space of all functions $f\in L^p(\mu_k)$ for which $T^\eta f$ exists in a weak sense and $T^\eta f \in L^p(\mu_k)$ for all $\eta \in \NN_0^N$ with $|\eta| \leq n$. It can be checked (following, for example, the ideas of \cite[Section 5.2]{Evans}) that this is a Banach space under the norm
$$ \norm{f}_{W_k^{n,p}(\RR^N)} := \left(\sum_{\eta\in \NN_0^N, |\eta| \leq n} \norm{T^\eta f}_p^p \right)^{1/p}.$$
In the particular case $p=2$, we write $H_k^n(\RR^N) := W_k^{n,2}(\RR^N)$. More generally, for any measure $\mu$ we can define $W_k^{n,p}(\mu)$ as the space for which $T^\eta f \in L^p(\mu)$ for all $0\leq |\eta| \leq n$.

One of the main differences between Dunkl operators and classical partial derivatives is that the Leibniz rule does not hold in general. Instead, we have the following. 

\begin{lem} \label{generalisedLeibniz}
If one of the functions $f,g$ is $G$-invariant, then we have the Leibniz rule
$$ T_i(fg) = f T_ig + g T_if.$$
In general, we have
$$ T_i(fg)(x) = T_if(x)g(x) + f(x)T_ig(x) - \Suma k_\alpha \alpha_i \frac{(f(x)-f(\sigma_\alpha x))(g(x)-g(\sigma_\alpha x))}{\alx}.$$
\end{lem}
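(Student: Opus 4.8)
The plan is to prove the general formula by a direct computation from the definition of $T_i$, and then obtain the $G$-invariant case as an immediate consequence. First I would write out $T_i(fg)(x)$ using the definition, separating the differential part $\partial_i(fg)(x)$ from the sum of difference quotients. The differential part obeys the ordinary Leibniz rule, $\partial_i(fg) = (\partial_i f)\, g + f\, (\partial_i g)$, so that contribution is already in the desired shape and needs no further work.

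The heart of the argument is to handle the difference terms $\frac{(fg)(x) - (fg)(\sigma_\alpha x)}{\alx}$. Abbreviating $a = f(x)$, $a' = f(\sigma_\alpha x)$, $b = g(x)$, and $b' = g(\sigma_\alpha x)$, the only nontrivial ingredient I need is the elementary algebraic identity
\[
ab - a'b' = (a - a')\,b + a\,(b - b') - (a-a')(b-b'),
\]
which one verifies by expanding the right-hand side. Substituting this into each summand and dividing by $\alx$ splits the difference part into three pieces: the term $\left(\Suma k_\alpha \alpha_i \frac{f(x)-f(\sigma_\alpha x)}{\alx}\right) g(x)$, the term $f(x)\left(\Suma k_\alpha \alpha_i \frac{g(x)-g(\sigma_\alpha x)}{\alx}\right)$, and the cross term $-\Suma k_\alpha \alpha_i \frac{(f(x)-f(\sigma_\alpha x))(g(x)-g(\sigma_\alpha x))}{\alx}$.

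Recombining, the first piece together with $(\partial_i f)\,g$ assembles into $(T_i f)\,g$, the second piece together with $f\,(\partial_i g)$ assembles into $f\,(T_i g)$, and the cross term is exactly the correction appearing in the statement. This establishes the general formula.

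Finally, for the special case I would argue as follows: if $g$ (say) is $G$-invariant, then since every reflection $\sigma_\alpha$ lies in $G$, we have $g(\sigma_\alpha x) = g(x)$ for all $\alpha \in R_+$, so each factor $g(x) - g(\sigma_\alpha x)$ vanishes and the correction term is identically zero; the case where $f$ is $G$-invariant is symmetric. There is no serious obstacle here — the computation is entirely routine once the product-difference identity above is in hand — so the only points requiring care are getting that algebraic splitting correct and keeping track of which terms recombine into $T_i f$ and $T_i g$.
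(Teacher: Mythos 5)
Your proof is correct: the product-difference identity $ab - a'b' = (a-a')b + a(b-b') - (a-a')(b-b')$ is verified by expansion, and substituting it into each difference quotient in the definition of $T_i$ yields exactly the stated formula, with the $G$-invariant case following because $g(\sigma_\alpha x) = g(x)$ kills every factor $g(x)-g(\sigma_\alpha x)$. The paper states this lemma without proof, and your direct computation is the standard argument one would supply, so there is nothing to compare beyond noting that your write-up fills the gap correctly.
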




A Sobolev inequality is available for the Dunkl gradient (see \cite{V}):

\begin{prop}
Let $1 \leq p < N+2\gamma$ and $q=\frac{p(N+2\gamma)}{N+2\gamma-p}$. Then there exists a constant $C>0$ such that we have the inequality 
$$ \norm{f}_q \leq C \norm{\nabla_k f}_p \qquad \forall f\in W^{1,p}_k(\RR^N).$$
\end{prop}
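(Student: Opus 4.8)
The plan is to establish the Dunkl Sobolev inequality
$$ \norm{f}_q \leq C \norm{\nabla_k f}_p, \qquad q = \frac{p(N+2\gamma)}{N+2\gamma - p}, $$
by reducing it to the single most favorable case $p=1$ and then bootstrapping to general $p$ via a power-substitution trick, exactly as one proves the classical Gagliardo--Nirenberg--Sobolev inequality. The key structural fact driving everything is that the weighted measure $\mu_k$ is homogeneous of degree $D := N + 2\gamma$ in the scaling sense: under dilation $x \mapsto \lambda x$, one has $\diff\mu_k(\lambda x) = \lambda^{N+2\gamma} \diff\mu_k(x)$, since $w_k$ is homogeneous of degree $2\gamma$ and Lebesgue measure contributes $N$. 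This makes $D$ the effective dimension, so the exponent $q$ is precisely the Sobolev conjugate of $p$ for dimension $D$, which is the sanity check one wants before starting.

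Let me think about how to get the base case $p=1$, where the target is $\norm{f}_{q_0} \leq C\norm{\nabla_k f}_1$ with $q_0 = \frac{D}{D-1}$.

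\emph{Proof plan.} \textbf{Step 1 (the base case $p=1$).} I would first prove the inequality for $p=1$, for $f \in C_c^\infty(\RR^N)$ by density. The cleanest route is a heat-semigroup or, more elementarily, a rearrangement/scaling argument, but the most robust approach in the Dunkl setting is to invoke a Nash-type or isoperimetric estimate compatible with the measure $\mu_k$. Concretely, I would exploit the scaling homogeneity above together with a known Nash inequality or the ultracontractivity of the Dunkl heat semigroup $e^{t\Delta_k}$, whose kernel satisfies Gaussian-type bounds with respect to $\mu_k$ and the effective dimension $D$. From the on-diagonal decay $\norm{e^{t\Delta_k}}_{1\to\infty} \lesssim t^{-D/2}$ one obtains, by the standard equivalence (Coulhon, Varopoulos--Saloff-Coste--Coulhon theory adapted to the doubling space $(\RR^N, \mu_k, |\cdot|)$), the Sobolev inequality $\norm{f}_{q_0} \leq C \norm{(-\Delta_k)^{1/2} f}_1$, and hence $\norm{f}_{q_0} \leq C\norm{\nabla_k f}_1$ after controlling the square root of the Laplacian by the gradient. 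The point is that $(\RR^N, \mu_k)$ is a doubling metric measure space of homogeneous dimension $D$, so the general machinery applies once the heat-kernel bound is in hand.

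\textbf{Step 2 (bootstrap to general $p$).} Given the case $p=1$, I would apply it to the function $g = |f|^{t}$ for a suitable exponent $t > 1$ to be chosen. The obstruction here is that the Leibniz rule fails for Dunkl operators (Lemma \ref{generalisedLeibniz}), so $\nabla_k(|f|^t)$ is \emph{not} simply $t|f|^{t-1}\nabla_k f$; there is an extra difference-term correction. However, the correction term in Lemma \ref{generalisedLeibniz} can be controlled: using $|f|^t = |f|^{t-1}\cdot |f|$ and the sign/convexity structure, one checks that the non-local contribution has a favorable sign or is pointwise dominated, so that the pointwise bound $|\nabla_k(|f|^t)| \leq t\, |f|^{t-1}\,|\nabla_k f|$ still holds (this mirrors the classical chain-rule inequality and is exactly where the Dunkl theory demands care). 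Applying the $p=1$ inequality to $|f|^t$, using H\"older with the conjugate exponents of $p$ on the right-hand side, and choosing $t = \frac{q(p-1)+p}{p} $ so that the resulting exponents match, yields $\norm{f}_q \leq C\norm{\nabla_k f}_p$ after rearranging.

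\textbf{The main obstacle} I anticipate is \textbf{Step 2}, specifically justifying the chain-rule inequality $|\nabla_k(|f|^t)| \leq t\,|f|^{t-1}|\nabla_k f|$ in the presence of the difference terms. In the classical case this is immediate, but here one must show the extra term
$$ \Suma k_\alpha \alpha_i \frac{\big(|f(x)|^t - |f(\sigma_\alpha x)|^t\big)}{\alx} $$
is controlled by $t|f|^{t-1}$ times the analogous first-order expression for $f$ itself. This follows from the elementary inequality $\big| a^t - b^t \big| \leq t\,\max(a,b)^{t-1}|a-b|$ for $a,b \geq 0$ combined with the nonnegativity of $k_\alpha$, but the bookkeeping across all roots $\alpha \in R_+$ and the interaction with the local term $\partial_i$ is the delicate point, and is presumably where the cited reference \cite{V} concentrates its effort. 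Once this pointwise gradient estimate is secured, the rest is the standard H\"older-and-optimize computation and poses no conceptual difficulty.
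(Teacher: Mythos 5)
The paper does not prove this proposition at all --- it is imported from \cite{V} --- so there is no internal argument to compare against; your outline has to stand on its own, and it has two genuine gaps, both located exactly at the points you flag as ``delicate'' and then wave through. The most serious one is in Step 2: the pointwise chain-rule bound $|\nabla_k(|f|^t)|\leq t\,|f|^{t-1}|\nabla_k f|$ is simply false for Dunkl operators. Take $N=1$ with $\sigma_\alpha x=-x$ and a point $x_0$ where $f(x_0)=0$, $f'(x_0)=0$ but $f(-x_0)\neq 0$: then $T(|f|^t)(x_0)=-k\,|f(-x_0)|^t/x_0\neq 0$ while $t|f(x_0)|^{t-1}|Tf(x_0)|=0$ for $t>1$. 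The elementary inequality you invoke produces the factor $\max\bigl(|f(x)|,|f(\sigma_\alpha x)|\bigr)^{t-1}$, which is not dominated by $|f(x)|^{t-1}$, and locality in $|f(x)|$ is precisely what the subsequent H\"older step needs. One can symmetrise under $x\mapsto\sigma_\alpha x$ (both $\mu_k$ and $|\alx|$ are invariant) to get an integrated bound involving $|f(x)|^{t-1}$ times the individual difference quotients $|f(x)-f(\sigma_\alpha x)|/|\alx|$, but those quotients are not pointwise controlled by $|\nabla_k f(x)|$ either, since each $T_if$ is a signed sum of the local derivative and the difference terms and cancellation is possible. This is the central obstruction of the Dunkl setting, not bookkeeping, and your outline contains no idea that overcomes it.

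Step 1 also does not close. Ultracontractivity $\norm{e^{t\Delta_k}}_{1\to\infty}\lesssim t^{-D/2}$ yields, via the Varopoulos-type equivalences you cite, the inequalities $\norm{f}_q\leq C\norm{(-\Delta_k)^{1/2}f}_p$ only for $1<p<D$; the endpoint $p=1$ --- which is the one case your bootstrap actually requires --- is exactly the case excluded by that machinery. Moreover, passing from $\norm{(-\Delta_k)^{1/2}f}_1$ to $\norm{\nabla_k f}_1$ amounts to an $L^1$ bound for a Riesz transform, which fails already in the classical unweighted case (only a weak-$(1,1)$ bound holds). A workable route to the $p=1$ Dunkl Sobolev inequality has to go through something like a weak-type $(1,\frac{D}{D-1})$ estimate for the Dunkl--Riesz potential combined with Maz'ya-type truncation, or an isoperimetric argument adapted to $\mu_k$; that is where the substance of \cite{V} lies, and it is absent here. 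Your identification of $D=N+2\gamma$ as the effective dimension and of the $p=1$-plus-bootstrap architecture is the right frame, but neither pillar of the argument is actually established.
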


The theory of Dunkl operators is enriched by the construction of the Dunkl kernel, which acts as a generalisation of the classical exponential function. Using the Dunkl kernel it is then possible to define a Dunkl transform, which generalises the classical Fourier transform, with which it shares many important properties. The approaches in this paper are elementary and do not make any use of these notions, so we will not go into further details here; the interested reader can find a more complete account in the review papers recommended at the beginning of this section.

%
%

\section{The main Log-Sobolev inequalities} \label{SEC:classicallogsob}

To begin with, we have the following Dunkl equivalent of the classical log-Sobolev inequality.

\begin{thm} \label{logsobolev}
There exists a constant $c \in \RR$ such that for any $\epsilon >0$ and for any $f\in H^1_k(\RR^N)$ we have
$$ \int_{\RR^N} f^2 \log \frac{f^2}{\int f^2 \diff\mu_k} \diff\mu_k 
\leq \epsilon \int_{\RR^N} |\nabla_k f|^2 \diff\mu_k + C(\epsilon) \int_{\RR^N} f^2 \diff\mu_k,$$
where $C(\epsilon) = \frac{N+2\gamma}{2}(\log\frac{1}{\epsilon} - c)$.
\end{thm}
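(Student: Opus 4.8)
The plan is to derive this "entropy--energy" form of the log-Sobolev inequality from the Sobolev inequality stated just above, together with Jensen's inequality, exactly as the introduction advertises. The starting point is the Sobolev inequality with $p=2$, which gives $q = \frac{2(N+2\gamma)}{N+2\gamma-2}$ and
$$ \norm{f}_q^2 \leq C_S^2 \norm{\nabla_k f}_2^2. $$

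First I would rewrite the entropy functional in terms of an $L^r$-norm ratio. The key identity is that for the entropy of $f^2$ one has, by differentiating the $L^r$ norm in $r$ at $r=2$,
$$ \frac{d}{dr}\Big|_{r=2} \log \norm{f}_r^2 = \frac{1}{2\norm{f}_2^2} \int_{\RR^N} f^2 \log \frac{f^2}{\int f^2 \diff\mu_k} \diff\mu_k. $$
Equivalently, and more directly usable here, I would apply Jensen's inequality to the concave function $\log$ with respect to the probability measure $\frac{f^2}{\int f^2 \diff\mu_k}\diff\mu_k$. Writing the entropy as
$$ \int f^2 \log \frac{f^2}{\int f^2\diff\mu_k}\diff\mu_k = \frac{2}{q-2}\int f^2 \log\frac{(f^2)^{(q-2)/2}}{\int f^2\diff\mu_k}\diff\mu_k, $$
and pulling the exponent $(q-2)/2$ out as a factor multiplying $\log$, Jensen against $\log$ bounds this by $\frac{2}{q-2}\left(\int f^2 \diff\mu_k\right)\log\frac{\int |f|^q\diff\mu_k}{(\int f^2\diff\mu_k)^{q/2}}$, i.e. by $\frac{2}{q-2}\norm{f}_2^2 \log \frac{\norm{f}_q^q}{\norm{f}_2^q}$. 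This turns the entropy into something controlled purely by the ratio $\norm{f}_q/\norm{f}_2$, to which the Sobolev inequality applies.

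Next I would insert the Sobolev bound $\norm{f}_q \leq C_S \norm{\nabla_k f}_2$, and here the scale-invariance trick that produces the free parameter $\epsilon$ enters. Because the inequality must hold for all $f$ and the left-hand side is invariant under $f\mapsto \lambda f$, I may normalize $\norm{f}_2 = 1$; then the entropy is bounded by $\frac{2}{q-2}\cdot \frac{q}{2}\log \norm{f}_q \le \frac{q}{q-2}\log\big(C_S\norm{\nabla_k f}_2\big)$. Applying the elementary inequality $\log t \le \delta t^2 - \frac{1}{2}\log\frac{1}{\delta} + \text{const}$ (equivalently $a\log b \le \tfrac{a}{2}\log(2ab^2/a) $, optimized by elementary calculus $\log s \le \frac{\epsilon'}{2}s^2 + \frac12\log\frac{1}{\epsilon'} - \frac12$) to $t = C_S\norm{\nabla_k f}_2$ linearizes the logarithm, converting $\log\norm{\nabla_k f}_2$ into a term of the form $\epsilon\norm{\nabla_k f}_2^2$ plus a constant of the form $\frac{N+2\gamma}{2}(\log\frac1\epsilon - c)$, after rescaling $\epsilon$ to absorb the factors $\frac{q}{q-2}$ and $C_S$. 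Tracking that $\frac{q}{q-2} = \frac{N+2\gamma}{2}$ for the Sobolev exponent is exactly what delivers the stated prefactor in $C(\epsilon)$.

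The main obstacle, and the step demanding care, is the bookkeeping in the last paragraph: I must undo the normalization $\norm{f}_2=1$ to recover the homogeneous two-term form with $\int f^2\diff\mu_k$ on both sides, and I must choose the free constant in the linearization $\log t \le \epsilon t^2 + (\text{const})$ so that the arbitrary $\epsilon>0$ in the statement matches the coefficient of $\int|\nabla_k f|^2\diff\mu_k$ while all remaining constants collapse into precisely $\frac{N+2\gamma}{2}(\log\frac1\epsilon - c)$ with a single universal $c$ absorbing $C_S$ and the numerical constants. I expect everything to go through for smooth compactly supported $f$, with a routine density argument in $H^1_k(\RR^N)$ extending it to the stated class; the only genuinely Dunkl-specific input is the Sobolev inequality already quoted, so no analogue of the Leibniz rule or other difference-operator subtleties should intervene here.
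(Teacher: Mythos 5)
Your proposal is correct and follows essentially the same route as the paper: Jensen's inequality for the concave logarithm against the probability measure $\frac{f^2}{\int f^2\,\diff\mu_k}\diff\mu_k$ reduces the entropy to $\frac{N+2\gamma}{2}\norm{f}_2^2\log\bigl(\norm{f}_q^2/\norm{f}_2^2\bigr)$ with $q$ the Sobolev exponent, after which the Dunkl--Sobolev inequality and the linearization $\log x\leq \epsilon x+\log\frac{1}{\epsilon}-1$ yield the stated form of $C(\epsilon)$. (Only a cosmetic slip: in your displayed identity the exponent $(q-2)/2$ should apply to the whole ratio $f^2/\int f^2\,\diff\mu_k$, not just the numerator; the Jensen bound you then state is the correct one.)
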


\begin{proof}
Fix $f\in H^1_k(\RR^N)$, $f \neq 0$. Then $\frac{f^2}{\int_{\RR^N} f^2 \diff\mu_k} \diff\mu_k$ is a probability measure, and so by Jensen's inequality we have, for any $\delta>0$, 
\begin{align*}
\int_{\RR^N} f^2 \log \frac{f^2}{\int f^2 \diff\mu_k} \diff\mu_k
&=\frac{1}{\delta} \int_{\RR^N} f^2 \diff\mu_k \cdot \int_{\RR^N} \frac{f^2}{\int f^2 \diff\mu_k} \log \left( \frac{f^2}{\int f^2 \diff\mu_k} \right)^\delta \diff\mu_k
\\
&\leq \frac{1}{\delta} \int_{\RR^N} f^2 \diff\mu_k \cdot \log \int_{\RR^N} \left( \frac{f^2}{\int f^2\diff\mu_k} \right)^{1+\delta} \diff\mu_k 
\\
&= \frac{\delta+1}{\delta} \norm{f}_{2}^2 \log \frac{\norm{f}_{2+2\delta}^2}{\norm{f}_2^2}.
\end{align*}
We then use the elementary inequality 
$$ \log x\leq \epsilon x +  \log \frac{1}{\epsilon} - 1,$$ 
which holds for all $x,\epsilon >0$. Thus
\begin{align*}
\int_{\RR^N} f^2 \log \frac{f^2}{\int f^2 \diff\mu_k} \diff\mu_k
&\leq \frac{\delta+1}{\delta}  \left[\epsilon \norm{f}_{2+2\delta}^2 + (\log \frac{1}{\epsilon} - 1) \norm{f}_2^2 \right].
\end{align*}
Finally, by choosing $\delta>0$ such that $2+2\delta = \frac{2(N+2\gamma)}{N+2\gamma-2}$, we can apply Sobolev's inequality to deduce 
\begin{align*}
\int_{\RR^N} f^2 \log \frac{f^2}{\int f^2 \diff\mu_k} \diff\mu_k
&\leq \frac{\delta+1}{\delta} C_{DS}^2 \epsilon \int_{\RR^N} |\nabla_k f|^2 \diff\mu_k + \frac{\delta+1}{\delta} (\log \frac{1}{\epsilon} - 1) \int_{\RR^N} f^2 \diff\mu_k,
\end{align*}
where $C_{DS}$ is the Sobolev constant. A simple relabelling of the constants finishes the proof.
\end{proof}

Using the general results of \cite{Davies}, from this inequality we can deduce a more general $L^p$ result, as well as the ultracontractivity property. 

\begin{cor}
Let $2<p<\infty$. Then, for any $\epsilon >0$ and for any $f \in C_c^\infty(\RR^N)$ such that $f\geq 0$, we have
$$ \IntN f^p \log \frac{f^p}{\int f^p \diff\mu_k} \diff\mu_k 
\leq \epsilon \IntN \nabla_k f \cdot \nabla_k (f^{p-1}) \diff\mu_k 
+ C\left( \frac{2\epsilon}{p} \right) \IntN f^p \diff\mu_k,$$
where $C(\epsilon)$ is as in the previous Theorem. 
\end{cor}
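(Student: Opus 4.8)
The plan is to derive this $L^p$ statement from the $L^2$ log-Sobolev inequality of Theorem~\ref{logsobolev} by the standard trick of applying the $L^2$ inequality to the function $g = f^{p/2}$. Concretely, I would set $g = f^{p/2}$ (legitimate since $f \geq 0$ and $f \in C_c^\infty$, so $g$ is nonnegative and sufficiently smooth to lie in $H^1_k(\RR^N)$). Then $g^2 = f^p$, and the entropy term on the left of Theorem~\ref{logsobolev} becomes exactly the desired left-hand side, namely $\IntN f^p \log \frac{f^p}{\int f^p \diff\mu_k} \diff\mu_k$. With the choice $\epsilon \mapsto \frac{2\epsilon}{p}$ in Theorem~\ref{logsobolev}, the residual term is $C\!\left(\frac{2\epsilon}{p}\right)\IntN f^p \diff\mu_k$, which matches the claimed constant; the remaining task is to reconcile the gradient term $\frac{2\epsilon}{p}\IntN |\nabla_k g|^2 \diff\mu_k$ with the stated $\epsilon \IntN \nabla_k f \cdot \nabla_k(f^{p-1})\diff\mu_k$.

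The heart of the argument is therefore an identity (or at least an inequality in the right direction) relating $|\nabla_k(f^{p/2})|^2$ to $\nabla_k f \cdot \nabla_k(f^{p-1})$. In the classical (non-Dunkl) case one simply computes $\nabla(f^{p/2}) = \frac{p}{2}f^{p/2-1}\nabla f$, so $|\nabla(f^{p/2})|^2 = \frac{p^2}{4}f^{p-2}|\nabla f|^2$, while $\nabla f \cdot \nabla(f^{p-1}) = (p-1)f^{p-2}|\nabla f|^2$; hence $\frac{2}{p}|\nabla(f^{p/2})|^2 = \frac{p}{2(p-1)}\,\nabla f\cdot\nabla(f^{p-1})$, and the factor $\frac{p}{2(p-1)}\le 1$ for $p\ge 2$ lets one absorb it into the constant $\epsilon$. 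I would replicate this computation, but using the generalised Leibniz rule of Lemma~\ref{generalisedLeibniz} in place of the ordinary chain/product rule. This is precisely where the main obstacle lies: Dunkl operators do not obey the chain rule, so $T_i(f^{p/2})$ and $T_i(f^{p-1})$ each acquire extra difference terms of the form $\Suma k_\alpha \alpha_i \frac{(\cdots)(\cdots)}{\alx}$, and the clean pointwise identity above no longer holds. The key technical step is to show that, after integrating against $\diff\mu_k$, these difference contributions either cancel or contribute with the correct sign so that the integrated inequality $\frac{2}{p}\IntN|\nabla_k(f^{p/2})|^2\diff\mu_k \le \IntN \nabla_k f\cdot\nabla_k(f^{p-1})\diff\mu_k$ still goes through.

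To make this rigorous I would exploit the reflection structure: for each fixed $\alpha$, the difference terms are antisymmetric (or symmetric) under $x \mapsto \sigma_\alpha x$, and since $\mu_k$ is $G$-invariant one may symmetrise the integrand over the two-point orbit $\{x, \sigma_\alpha x\}$. In the classical limit these difference terms vanish, so one expects the symmetrised integrals to reduce to a sum of terms each of a definite sign. I anticipate that the relevant quantity to control is, for each $\alpha$, an expression of the shape $(f(x)^{p/2} - f(\sigma_\alpha x)^{p/2})^2$ versus $(f(x)-f(\sigma_\alpha x))(f(x)^{p-1}-f(\sigma_\alpha x)^{p-1})$, and that an elementary convexity inequality for the map $t \mapsto t^{p/2}$ on $[0,\infty)$ (valid for $p \ge 2$) yields the comparison with the constant $\frac{2}{p}$. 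This convexity/rearrangement estimate on the two-point orbits is the crux; once it is established the result follows by combining it with Theorem~\ref{logsobolev} applied to $g=f^{p/2}$ and relabelling. The ultracontractivity and the extension to the full range via the results of \cite{Davies} are then invoked as a black box exactly as stated.
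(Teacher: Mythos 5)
Your plan is sound and, in substance, it reconstructs the proof of the result the paper actually cites: the paper's own argument is the single sentence ``this follows from Theorem \ref{logsobolev} and Davies, Lemma 2.2.6'', and Davies's lemma is proved exactly by the substitution $g=f^{p/2}$ with $\epsilon\mapsto 2\epsilon/p$ together with the Dirichlet-form comparison $Q(f^{p/2})\leq\frac{p^2}{4(p-1)}\langle Hf,f^{p-1}\rangle$. So you are not taking a different route so much as opening the black box. The one ingredient you gesture at but do not supply is the precise form of the ``symmetrisation'': what makes the comparison work is the carr\'e-du-champ decomposition of the Dunkl energy form into a local part and a jump part,
$$ \IntN \nabla_k u\cdot\nabla_k v\diff\mu_k = \IntN \nabla u\cdot\nabla v\diff\mu_k + \Suma k_\alpha \IntN \frac{(u(x)-u(\sigma_\alpha x))(v(x)-v(\sigma_\alpha x))}{\alx^2}\diff\mu_k, $$
which is true and standard in Dunkl theory but is not stated in this paper and must be proved (it is where the cross terms between the differential and difference parts of $T_i$ cancel after integration). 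Granting it, the local part gives the classical identity $|\nabla(f^{p/2})|^2=\frac{p^2}{4(p-1)}\nabla f\cdot\nabla(f^{p-1})$ and the jump part is handled by the two-point Stroock--Varopoulos inequality $(a^{p/2}-b^{p/2})^2\leq\frac{p^2}{4(p-1)}(a-b)(a^{p-1}-b^{p-1})$ for $a,b\geq 0$ (proved by writing $a^{p/2}-b^{p/2}=\frac{p}{2}\int_b^a t^{p/2-1}\diff t$ and applying Cauchy--Schwarz, rather than bare convexity); both pieces of $\IntN\nabla_k f\cdot\nabla_k(f^{p-1})\diff\mu_k$ are nonnegative, which you need in order to discard the factor $\frac{p}{2(p-1)}\leq 1$. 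What your version buys is self-containedness; what the paper's citation buys is brevity, at the cost of having to know that the Dunkl energy form is genuinely a Dirichlet form (equivalently, that the Dunkl heat semigroup is submarkovian) so that Davies's general lemma applies.
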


\begin{proof}
This follows from the previous Theorem and \cite[Lemma 2.2.6]{Davies}.
\end{proof}

Finally, we recover the ultracontractivity property for the Dunkl heat semigroup. This was already established in \cite{V} using properties of the heat kernel; using the new log-Sobolev approach, no a priori bounds on the heat kernel are necessary.

\begin{cor}
The Dunkl heat semigroup $(H_t)_{t\geq 0}$ on $L^2(\mu_k)$ with generator $\Delta_k$ is ultracontractive. More precisely, for all $t>0$ we have
$$ \norm{H_tf}_{\infty} \leq C t^{-\frac{N+2\gamma}{4}} \norm{f}_2,$$
for a constant $C>0$.
\end{cor}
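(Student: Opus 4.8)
The plan is to invoke the abstract equivalence, due to Davies, between a family of logarithmic Sobolev inequalities carrying a logarithmically-growing remainder and an ultracontractive bound of the stated power-law type; the content of the corollary is then simply to check that Theorem \ref{logsobolev} supplies exactly the hypothesis of that equivalence, with dimensional parameter $\nu = N + 2\gamma$.

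First I would set up the analytic framework. The operator $\Delta_k$ is self-adjoint and non-positive on $L^2(\mu_k)$: the integration by parts formula $\IntN T_i(f) g \diff\mu_k = -\IntN f T_i(g)\diff\mu_k$ gives, for $f$ in the form domain,
\[
\la -\Delta_k f, f \ra_{L^2(\mu_k)} = \sum_{i=1}^N \IntN |T_i f|^2 \diff\mu_k = \IntN |\nabla_k f|^2 \diff\mu_k =: Q(f) \geq 0,
\]
so that $Q$ is precisely the Dirichlet form associated with the heat semigroup $H_t = e^{t\Delta_k}$. Since $(H_t)_{t\geq 0}$ is a symmetric submarkovian semigroup, it is a contraction on every $L^p(\mu_k)$, which places us squarely in the setting of Davies' ultracontractivity theory.

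Next I would rewrite Theorem \ref{logsobolev} in the normalisation used by Davies. Expanding the entropy and using $\log f^2 = 2\log|f|$, the inequality of Theorem \ref{logsobolev} becomes
\[
\IntN f^2 \log|f| \diff\mu_k \leq \frac{\epsilon}{2} Q(f) + \frac{C(\epsilon)}{2}\norm{f}_2^2 + \norm{f}_2^2 \log\norm{f}_2.
\]
Relabelling $\epsilon \mapsto 2\epsilon$ puts this in the form $\IntN f^2 \log|f|\diff\mu_k \leq \epsilon Q(f) + \beta(\epsilon)\norm{f}_2^2 + \norm{f}_2^2\log\norm{f}_2$ with $\beta(\epsilon) = \tfrac12 C(2\epsilon) = \frac{N+2\gamma}{4}\bigl(\log\tfrac{1}{\epsilon} - c'\bigr)$ for a constant $c'$. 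The decisive feature is the coefficient $\frac{N+2\gamma}{4}$ of $\log\frac1\epsilon$, which identifies the effective dimension as $\nu = N + 2\gamma$.

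Finally I would apply the equivalence theorem of Davies (in the form of \cite[Theorem 2.2.3]{Davies}): a logarithmic Sobolev inequality with $\beta(\epsilon) = \frac{\nu}{4}\log\frac{1}{\epsilon} + O(1)$, valid for all $\epsilon > 0$, is equivalent to the ultracontractive bound $\norm{H_t}_{\infty} \leq C t^{-\nu/4}\norm{f}_2$ for all $t > 0$. With $\nu = N+2\gamma$ this yields precisely the claim. The only genuine work lies in the bookkeeping of constants in the previous step — in particular the factor of two relating $\log f^2$ to $\log|f|$ together with the relabelling of $\epsilon$ — which must be tracked carefully so that the coefficient $\frac{N+2\gamma}{2}$ appearing in $C(\epsilon)$ produces exactly the exponent $(N+2\gamma)/4$; the verification that the Dunkl semigroup genuinely meets the structural hypotheses (self-adjointness and the identification of $Q$ as its Dirichlet form) is the other point requiring care, though it follows directly from the integration by parts formula.
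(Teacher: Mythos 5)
Your overall strategy is the paper's: reduce the corollary to Davies' abstract machinery by checking that $\Delta_k$ generates a symmetric semigroup whose Dirichlet form is $Q(f)=\IntN |\nabla_k f|^2\diff\mu_k$, rewrite Theorem \ref{logsobolev} in Davies' normalisation, and read off the effective dimension $\nu=N+2\gamma$ from the coefficient of $\log\frac{1}{\epsilon}$. That bookkeeping (the factor of two from $\log f^2=2\log|f|$ and the relabelling of $\epsilon$) is carried out correctly. However, the reduction as you state it has a gap: the implication you need is the \emph{hard} direction, log-Sobolev $\Rightarrow$ ultracontractivity, and in Davies' book that is Theorem 2.2.7, not Theorem 2.2.3 --- the latter is the converse (ultracontractivity $\Rightarrow$ log-Sobolev). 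Theorem 2.2.7 does not take the single $L^2$ inequality as input; its hypothesis is the whole family of $L^p$ logarithmic Sobolev inequalities for $2<p<\infty$, of the form
\[ \IntN f^p \log \frac{f^p}{\int f^p \diff\mu_k} \diff\mu_k \leq \epsilon \IntN \nabla_k f \cdot \nabla_k (f^{p-1}) \diff\mu_k + C\!\left(\tfrac{2\epsilon}{p}\right) \IntN f^p \diff\mu_k, \]
which is precisely what the paper's preceding corollary supplies by invoking Davies' Lemma 2.2.6. Your proposal elides this step entirely by describing the result as a direct $L^2$-level equivalence.

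The omission matters because the passage from $p=2$ to general $p$ is not free: it rests on the submarkovian property of $(H_t)$ through a Stroock--Varopoulos-type inequality $\la -\Delta_k f^{p/2}, f^{p/2}\ra \leq \frac{p^2}{4(p-1)} \la -\Delta_k f, f^{p-1}\ra$. You do assert that the semigroup is symmetric submarkovian, which is the right hypothesis to flag, but positivity preservation for the Dunkl heat semigroup is itself a nontrivial fact (it comes from the positivity of the Dunkl heat kernel, which ultimately relies on R\"osler's positivity results) and deserves at least a citation rather than the bare phrase ``which places us squarely in the setting.'' The argument is entirely repairable --- insert the $L^2\to L^p$ upgrade via Davies' Lemma 2.2.6 and then cite Theorem 2.2.7 --- but as written the key citation does not prove the direction you need.
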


\begin{proof}
This follows from the previous Corollary and \cite[Theorem 2.2.7]{Davies}.
\end{proof}

In what follows, we want to study inequalities for probability measures of the form
$$\diff\mu_U:= \frac{1}{Z} e^{-U} \diff\mu_k,$$
where $U = |x|^p$ and $Z=\IntN e^{-U} \diff\mu_k$. To illustrate the method and to motivate the study of $U$-bounds in the next section, we first consider Gaussian weight in the following Theorem. This result will be further refined and generalised in the next sections, but the main lines of the proof will remain the same. 

\begin{thm} \label{Gaussianlog-sob}
Let $U=|x|^2$ and consider the probability measure $\mu_U:=\frac{1}{Z} e^{-U} \mu_k$, where $Z=\IntN e^{-U} \diff\mu_k$. Then, there exist constants $C_1,C_2>0$ such that the following inequality holds for all $f\in H^1_k(\RR^N)$:
$$ \int_{\RR^N} f^2 \log \frac{f^2}{\int f^2 \diff\mu_U} \diff\mu_U
\leq C_1 \int_{\RR^N} |\nabla_k f|^2 \diff\mu_U + C_2 \int_{\RR^N} f^2 \diff\mu_U.$$
\end{thm}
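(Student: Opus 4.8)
The plan is to follow the strategy announced in the introduction: apply the base inequality of Theorem \ref{logsobolev} to a reweighted function and then absorb the resulting residual terms by means of a $U$-bound tailored to $U=|x|^2$. Concretely, I would set $g=fe^{-U/2}$, so that $g^2=f^2e^{-U}$ and hence $\IntN g^2\diff\mu_k = Z\IntN f^2\diff\mu_U$. Feeding $g$ into Theorem \ref{logsobolev} and dividing by $Z$, the left-hand side unfolds, via the identity $\log(g^2/\int g^2\diff\mu_k)=\log(f^2/\int f^2\diff\mu_U)-U-\log Z$, into the target entropy $\IntN f^2\log\frac{f^2}{\int f^2\diff\mu_U}\diff\mu_U$ together with the two bookkeeping terms $\IntN f^2U\diff\mu_U$ and $(\log Z)\IntN f^2\diff\mu_U$. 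The last of these is already of the admissible form $C_2\IntN f^2\diff\mu_U$; the term $\IntN f^2U\diff\mu_U=\IntN|x|^2f^2\diff\mu_U$ is the first residual to be controlled.

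Next I would compute the gradient on the right-hand side. Since $U=|x|^2$ is $G$-invariant, so is $e^{-U/2}$, and the Leibniz rule of Lemma \ref{generalisedLeibniz} applies without its extra difference term, giving $T_ig=e^{-U/2}(T_if-x_if)$ and hence $|\nabla_kg|^2=e^{-U}|\nabla_kf-xf|^2$. After dividing by $Z$, the gradient contribution $\frac{\epsilon}{Z}\IntN|\nabla_kg|^2\diff\mu_k$ expands as $\epsilon\IntN|\nabla_kf|^2\diff\mu_U$ (the term we want) together with the cross term $-2\epsilon\IntN f\langle\nabla_kf,x\rangle\diff\mu_U$ and the quadratic term $\epsilon\IntN|x|^2f^2\diff\mu_U$. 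The cross term is harmless: by Cauchy--Schwarz and Young it is bounded by $\epsilon\IntN|\nabla_kf|^2\diff\mu_U+\epsilon\IntN|x|^2f^2\diff\mu_U$. At this point every surviving bad term is a multiple of $\IntN|x|^2f^2\diff\mu_U$, so the whole argument reduces to a single $U$-bound of the form $\IntN|x|^2f^2\diff\mu_U\le A\IntN|\nabla_kf|^2\diff\mu_U+B\IntN f^2\diff\mu_U$.

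Establishing this $U$-bound is the heart of the matter and the step I expect to be the main obstacle. I would derive it from an integration-by-parts identity for $\mu_U$: since $e^{-U}$ is $G$-invariant, the Dunkl integration by parts gives $\IntN T_i(\phi)\diff\mu_U=\IntN\phi\,\partial_iU\diff\mu_U=2\IntN\phi\,x_i\diff\mu_U$. Taking $\phi=x_if^2$ and summing over $i$, the left-hand side equals $2\IntN|x|^2f^2\diff\mu_U$, while the right-hand side, computed with the general Leibniz rule, is $N\IntN f^2\diff\mu_U+\IntN\langle x,\nabla(f^2)\rangle\diff\mu_U$ plus a reflection sum. The key computations are that $\langle\alpha,\sigma_\alpha x\rangle=-\alx$ (using $|\alpha|^2=2$), which collapses the difference quotients into the clean expression $\Suma k_\alpha(f^2(x)+f^2(\sigma_\alpha x))$, and the $G$-invariance of $\mu_U$, which lets me replace $\IntN f^2(\sigma_\alpha x)\diff\mu_U$ by $\IntN f^2\diff\mu_U$, contributing $2\gamma\IntN f^2\diff\mu_U$. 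Rewriting $\langle x,\nabla(f^2)\rangle$ in terms of the Dunkl gradient produces a further reflection term which, crucially, assembles after integration into $-\Suma k_\alpha\IntN(f(x)-f(\sigma_\alpha x))^2\diff\mu_U\le 0$; discarding this favourable term leaves $2\IntN|x|^2f^2\diff\mu_U\le(N+2\gamma)\IntN f^2\diff\mu_U+2\IntN f\langle x,\nabla_kf\rangle\diff\mu_U$, and a final Young inequality on the remaining cross term yields the $U$-bound with $A=1$ and $B=N+2\gamma$.

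With the $U$-bound in hand, the proof closes mechanically: I collect all contributions, replace $\IntN|x|^2f^2\diff\mu_U$ by $A\IntN|\nabla_kf|^2\diff\mu_U+B\IntN f^2\diff\mu_U$, fix $\epsilon$ (say $\epsilon=1$), and relabel the resulting absolute constants as $C_1$ and $C_2$. Everything hinges on the integration-by-parts identity for $\mu_U$ and on recognising that the reflection contributions combine with a sign that lets them simply be thrown away rather than estimated; the relation $\langle\alpha,\sigma_\alpha x\rangle=-\alx$ and the $G$-invariance of the measure are precisely what make these difference terms tractable. This same skeleton, with $|x|^2$ replaced by $|x|^p$ and the exact $U$-bound replaced by the more delicate $U$-bounds of the next section, is what I would expect to drive the general Boltzmann case.
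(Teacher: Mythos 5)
Your proposal is correct and follows essentially the same route as the paper: apply Theorem \ref{logsobolev} to $\frac{1}{\sqrt{Z}}fe^{-U/2}$, use the $G$-invariant Leibniz rule to expand the gradient term, and reduce everything to the $U$-bound $\IntN |x|^2 f^2\,\diff\mu_U \le \IntN |\nabla_k f|^2\,\diff\mu_U + (N+2\gamma)\IntN f^2\,\diff\mu_U$, which both you and the paper derive by integration by parts combined with the generalised Leibniz rule applied to $T_i(x_i\,\cdot)$. The only (cosmetic) difference is in the bookkeeping of that $U$-bound: you integrate $T_i(x_i f^2)$ directly against $\mu_U$ and keep the reflection contribution as the manifestly nonpositive term $-\Suma k_\alpha \int (f(x)-f(\sigma_\alpha x))^2\,\diff\mu_U$, whereas the paper works with the cross term $A=\frac{1}{Z}\IntN f\,\nabla U\cdot\nabla_k(fe^{-U/2})e^{-U/2}\,\diff\mu_k$ against $\mu_k$ and estimates the reflection sum via $2XY\le X^2+Y^2$ --- both yield the same constants.
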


\begin{proof}
Plugging $\frac{1}{\sqrt{Z}} fe^{-U/2}$ in Theorem \ref{logsobolev}, we have
\begin{align*}
\int_{\RR^N} f^2 \log \frac{f^2}{\int f^2 \diff\mu_U} \diff\mu_U
&\leq \frac{1}{Z} \epsilon \int_{\RR^N} |\nabla_k(fe^{-U/2})|^2 \diff\mu_k 
\\
&\qquad \qquad
+ (C(\epsilon) + \log Z) \int_{\RR^N} f^2 \diff\mu_U + \int_{\RR^N} f^2 U \diff\mu_U.
\end{align*}
Since $U$ is $G$-invariant, we can use the Leibniz rule to compute
\begin{align*}
\frac{1}{Z} \int_{\RR^N} |\nabla_k(fe^{-U/2})|^2 \diff\mu_k
&=\int_{\RR^N} |\nabla_k f - \frac{1}{2} f \nabla U|^2 \diff\mu_U
\\
&\leq 2 \int_{\RR^N} |\nabla_k f|^2 \diff\mu_U + \frac{1}{2} \int_{\RR^N} f^2 |\nabla U|^2 \diff\mu_U
\\
&=2 \int_{\RR^N} |\nabla_k f|^2 \diff\mu_U + 2 \int_{\RR^N} f^2 U \diff\mu_U,
\end{align*}
where, in the last line, we used the fact that $\nabla U = 2x$, so $|\nabla U|^2 = 4U$. Replacing this inequality in the above, we have
\begin{equation} \label{gaussianlogsob2}
\begin{aligned} 
\int_{\RR^N} f^2 \log \frac{f^2}{\int f^2 \diff\mu_U} \diff\mu_U
&\leq 2\epsilon \int_{\RR^N} |\nabla_k f|^2 \diff\mu_U 
\\
&\qquad \qquad
+ (C(\epsilon) + \log Z) \int_{\RR^N} f^2 + (1+2\epsilon) \IntN f^2U \diff\mu_U.
\end{aligned}
\end{equation}

We now use the identity 
\begin{equation} \label{logsobidentity}
 \nabla_k(fe^{-U/2}) = e^{-U/2}\nabla_k f - \frac{1}{2} f e^{-U/2} \nabla U
\end{equation}
to deduce 
\begin{equation} \label{leibnizgaussian}
\begin{aligned} 
\int_{\RR^N} |\nabla_k f|^2 \diff\mu_U 
&= \frac{1}{Z} \int_{\RR^N} |\nabla_k(fe^{-U/2})|^2 \diff\mu_k 
+ \frac{1}{4}\int_{\RR^N} f^2 |\nabla U|^2 \diff\mu_U 
\\
&\qquad 
+ \frac{1}{Z} \int_{\RR^N} f \nabla U \cdot \nabla_k(fe^{-U/2}) e^{-U/2} \diff\mu_k.
\end{aligned}
\end{equation}
Keeping in mind that $\nabla U = 2x$, this equality implies that
\begin{equation} \label{gaussianlogsob1}
\int_{\RR^N} |\nabla_k f|^2 \diff\mu_U  
\geq \int_{\RR^N} f^2U \diff\mu_U + A,
\end{equation}
where
\begin{align*}
A:= \frac{1}{Z} \int_{\RR^N} f \nabla U \cdot \nabla_k(fe^{-U/2}) e^{-U/2} \diff\mu_k
\end{align*}
We now compute the quantity $A$. Firstly, by integration by parts we have
$$ A=-\sum_{i=1}^N \frac{1}{Z} \int_{\RR^N} f e^{-U/2} T_i(2x_i f e^{-U/2}) \diff\mu_k(x).$$
Using Lemma \ref{generalisedLeibniz}, we have
\begin{equation} \label{leibnizxi}
\begin{aligned}
&T_i(x_ife^{-U/2}) 
\\
&\qquad 
= x_i T_i(f e^{-U/2}) + f e^{-U/2} T_i(x_i)
- \Suma k_\alpha \alpha_i e^{-U(x)/2} \frac{(f(x)-f(\sigma_\alpha x))(x_i - (\sigma_\alpha x)_i)}{\alx}
\\
&\qquad 
= x_i T_i(f e^{-U/2}) + f e^{-U/2} (1+\Suma k_\alpha \alpha_i^2)
- \Suma k_\alpha \alpha_i^2 e^{-U(x)/2} (f(x)-f(\sigma_\alpha x))
\end{aligned}
\end{equation}
Thus
\begin{align*}
A
&= -A - 2(N+2\gamma) \int_{\RR^N} f^2 \diff\mu_U + 4 \Suma k_\alpha \int_{\RR^N} f(x)(f(x)-f(\sigma_\alpha x)) \diff\mu_U(x), 
\end{align*}
and so
$$ A = - N \int_{\RR^N} f^2 \diff\mu_U - 2 \Suma k_\alpha \int_{\RR^N} f(x)f(\sigma_\alpha x) \diff\mu_U(x).$$
Using the elementary inequality $2XY \leq X^2+Y^2$, we obtain
$$ A \geq - (N+2\gamma) \int_{\RR^N} f^2 \diff\mu_U.$$

Replacing this in equation (\ref{gaussianlogsob1}), we obtain
\begin{align} \label{keyineq}
\int_{\RR^N}  f^2 U \diff\mu_U 
\leq \int_{\RR^N} |\nabla_kf|^2 \diff\mu_U + (N+2\gamma) \int_{\RR^N} f^2 \diff\mu_U.
\end{align}
Finally, using this in (\ref{gaussianlogsob2}), we have
$$ \int_{\RR^N} f^2 \log \frac{f^2}{\int f^2 \diff\mu_U} \diff\mu_U
\leq C_1 \int_{\RR^N} |\nabla_k f|^2 \diff\mu_U + C_2 \int_{\RR^N} f^2 \diff\mu_U,$$
for some constants $C_1,C_2>0$, as required.
\end{proof}

\section{U-bounds} \label{SEC:ubounds}

Looking back at the proof of the weighted log-Sobolev inequality in Theorem \ref{Gaussianlog-sob}, we can see that inequality (\ref{keyineq}) was the key element. Inequalities of this form are called U-bounds (cf. \cite{HZ}). In this section we will prove more general U-bounds by adapting our proof slightly, and these will later be used to deduce log-Sobolev inequalities.

\begin{prop} \label{Ubounds-2p-prop}
Let $p>1$ and consider the function $U(x)=|x|^p$ and the probability measure $\mu_U:= \frac{1}{Z} e^{-U} \mu_k$, where $Z = \IntN e^{-U} \diff\mu_k$. Then, for any $f\in H_k^1(\mu_U)$, we have the inequality
\begin{equation} \label{Ubounds-2p}
\int_{\RR^N} f^2  |x|^{2(p-1)} \diff\mu_U  
\leq C \int_{\RR^N} |\nabla_k f|^2 \diff\mu_U + D \int_{\RR^N} f^2 \diff\mu_U,
\end{equation}
for some constants $C,D>0$.
\end{prop}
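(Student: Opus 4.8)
The plan is to mimic the proof of Theorem \ref{Gaussianlog-sob} almost verbatim, since the Gaussian case $p=2$ is precisely the instance that produced the $U$-bound \eqref{keyineq}, and the only genuinely $p$-dependent quantities are $\nabla U$ and the way it couples to $U$ itself. I would start from the identity \eqref{leibnizgaussian}, which in the present generality reads
\begin{equation*}
\int_{\RR^N} |\nabla_k f|^2 \diff\mu_U
= \frac{1}{Z}\int_{\RR^N} |\nabla_k(fe^{-U/2})|^2 \diff\mu_k
+ \frac14 \int_{\RR^N} f^2 |\nabla U|^2 \diff\mu_U
+ A,
\end{equation*}
where $A = \frac{1}{Z}\int_{\RR^N} f\,\nabla U \cdot \nabla_k(fe^{-U/2})\,e^{-U/2}\diff\mu_k$. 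Dropping the nonnegative first term, and using that for $U=|x|^p$ one has $\nabla U = p|x|^{p-2}x$, hence $|\nabla U|^2 = p^2 |x|^{2(p-1)}$, this already isolates the target quantity $\int f^2 |x|^{2(p-1)}\diff\mu_U$ on the right-hand side (up to the factor $p^2/4$), so that
\begin{equation*}
\frac{p^2}{4}\int_{\RR^N} f^2 |x|^{2(p-1)}\diff\mu_U
\leq \int_{\RR^N} |\nabla_k f|^2 \diff\mu_U - A.
\end{equation*}
The whole task therefore reduces to bounding $-A$ from above by a controlled multiple of the two right-hand terms in \eqref{Ubounds-2p}.

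To estimate $A$ I would integrate by parts as in the Gaussian proof, writing $A = -\sum_i \frac{1}{Z}\int f e^{-U/2}\, T_i\!\big((\partial_i U)\, f e^{-U/2}\big)\diff\mu_k$, and then expand the Dunkl product $T_i\big((\partial_i U) f e^{-U/2}\big)$ via Lemma \ref{generalisedLeibniz}. This splits into three groups of terms: the symmetric term that reproduces $-A$ itself (allowing one to solve for $A$, as in \eqref{leibnizxi}), a term involving $\sum_i T_i(\partial_i U) = \Delta_k U$, and the difference terms carrying the factor $(\partial_i U)(x) - (\partial_i U)(\sigma_\alpha x)$ together with $f(x)-f(\sigma_\alpha x)$. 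Since $U$ is $G$-invariant, I would use formula \eqref{Dunkllaplacian} to compute $\Delta_k U = \Delta U + 2\sum_{\alpha} k_\alpha \langle \nabla U,\alpha\rangle/\alx$; for $U=|x|^p$ a direct computation gives $\Delta U = p(N+p-2)|x|^{p-2}$ and $\langle\nabla U,\alpha\rangle/\alx = p|x|^{p-2}$, so $\Delta_k U = p(N+2\gamma+p-2)|x|^{p-2}$, which produces a term of the form $\mathrm{const}\cdot\int f^2 |x|^{p-2}\diff\mu_U$.

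The main obstacle is that, unlike the $p=2$ case where $\nabla U = 2x$ is odd so the reflection difference $(\partial_i U)(x)-(\partial_i U)(\sigma_\alpha x)$ collapses to a multiple of $x_i$, for general $p$ the residual terms involve quantities such as $|x|^{p-2}$ and the nonsmooth behaviour near the origin, and the resulting lower-order integral $\int f^2 |x|^{p-2}\diff\mu_U$ is \emph{not} directly one of the two terms on the right of \eqref{Ubounds-2p}. The key point I would exploit is that $|x|^{p-2} \leq \frac{p^2}{4}\eta\, |x|^{2(p-1)} + c_\eta$ for any $\eta>0$ by Young's inequality (valid since $p>1$ forces $p-2 < 2(p-1)$), which lets me absorb a small fraction of this lower-order term back into the leading $\int f^2|x|^{2(p-1)}\diff\mu_U$ on the left-hand side and bound the remainder by $\int f^2\diff\mu_U$. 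The reflection-difference terms I would control exactly as in the Gaussian proof, applying $2XY \leq X^2+Y^2$ to the products $f(x)\big(f(x)-f(\sigma_\alpha x)\big)$ so as to bound them by a multiple of $\int f^2\diff\mu_U$ (here using that the weight $|x|^{p-2}$ is $G$-invariant, so the change of variables $x\mapsto\sigma_\alpha x$ preserves $\diff\mu_U$). After collecting terms and choosing $\eta$ small enough to keep the coefficient of the leading term positive, a relabelling of constants yields \eqref{Ubounds-2p}.
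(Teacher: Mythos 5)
Your derivation of the intermediate bound
\begin{equation*}
\int_{\RR^N} f^2|x|^{2(p-1)}\diff\mu_U \leq \frac{4}{p^2}\int_{\RR^N}|\nabla_k f|^2\diff\mu_U + C_p\int_{\RR^N} f^2|x|^{p-2}\diff\mu_U
\end{equation*}
matches the paper's, including the computation of $\Delta_k U=p(N+2\gamma+p-2)|x|^{p-2}$ and the symmetrisation of the reflection terms (note, though, that after applying $2XY\le X^2+Y^2$ those terms are bounded by a multiple of $\int f^2|x|^{p-2}\diff\mu_U$, not of $\int f^2\diff\mu_U$; the weight stays attached, which is harmless since it feeds into the same absorption step). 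For $p\ge 2$ your absorption is also correct: the pointwise inequality $t^{p-2}\le\eta\,t^{2(p-1)}+c_\eta$ holds for all $t\ge0$ in that range, and is an acceptable substitute for the paper's H\"older--Young interpolation between $\int f^2\diff\mu_U$ and $\int f^2|x|^{2(p-1)}\diff\mu_U$.

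The gap is in the range $1<p<2$, where the exponent $p-2$ is negative. The inequality $|x|^{p-2}\le\eta|x|^{2(p-1)}+c_\eta$ you invoke is false there: as $x\to0$ the left-hand side blows up while the right-hand side tends to the finite constant $c_\eta$. The condition $p-2<2(p-1)$ that you cite only governs the comparison at infinity, not at the origin, so the singular lower-order term $\int f^2|x|^{p-2}\diff\mu_U$ cannot be absorbed this way. The paper handles this range with a localisation: taking $\phi$ equal to $0$ on $|x|\le1$, to $1$ on $|x|\ge2$, and Lipschitz in between, it writes $f=f\phi+f(1-\phi)$, applies the intermediate bound to $f\phi$ (whose support avoids the unit ball, so that $|x|^{p-2}\le1$ there and the troublesome term is at once dominated by $\int f^2\diff\mu_U$, at the cost of the extra gradient contribution $|\nabla_k(f\phi)|^2\le 2|\nabla_k f|^2+2f^2$ from the Leibniz rule), and bounds the compactly supported piece $f(1-\phi)$ trivially by $2^{2(p-1)}\int f^2\diff\mu_U$. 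Some such cutoff near the origin is needed to complete your argument for $1<p<2$.
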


\begin{proof}
We follow more closely the proof of (\ref{keyineq}). We have
$$ \nabla U(x) = p|x|^{p-2} x.$$
From (\ref{leibnizgaussian}), we obtain
$$ \frac{p^2}{4} \int_{\RR^N} f^2 |x|^{2(p-1)} \diff\mu_U
\leq \int_{\RR^N} |\nabla_k f|^2 \diff\mu_U - A,$$
where
$$ A= \frac{1}{Z} \int_{\RR^N} f \nabla U \cdot \nabla_k(fe^{-U/2}) e^{-U/2} \diff\mu_k.$$
As before
\begin{align*}
2A
&= -\left[p(p-2)+p(N+2\gamma)-2\gamma p \right] \int_{\RR^N} f^2 |x|^{p-2} \diff\mu_U 
\\
&\qquad
-2p \Suma k_\alpha \int_{\RR^N} |x|^{p-2} f(x)f(\sigma_\alpha x) \diff\mu_U(x)
\\
&\geq -\left[p^2+p(N+2\gamma-2) \right] \int_{\RR^N} f^2 |x|^{p-2} \diff\mu_U.
\end{align*}
Thus
\begin{align} \label{Ubounds-2p-intermediate}
\int_{\RR^N} f^2 |x|^{2(p-1)} \diff\mu_U
\leq \frac{4}{p^2} \int_{\RR^N} |\nabla_k f|^2 \diff\mu_U 
+ \frac{2\left[p^2+p(N+2\gamma-2) \right]}{p^2} \int_{\RR^N} f^2|x|^{p-2} \diff\mu_U.
\end{align}

Assume first that $p>2$ and let $\epsilon>0$. Then, using H\"older's inequality with coefficients $\frac{p}{2(p-1)}$ and $\frac{p-2}{2(p-1)}$, and then Young's inequality with the same coefficients, we have
\begin{align*}
\IntN f^2|x|^{p-2} \diff\mu_U 
&\leq \left( \IntN f^2 \diff\mu_U \right)^\frac{p}{2(p-1)} \left(\IntN f^2 |x|^{2(p-1)} \diff\mu_U\right)^\frac{p-2}{2(p-1)}
\\
&\leq  \frac{p}{2(p-1)} \epsilon^{-\frac{p-2}{p}}\IntN f^2 \diff\mu_U + \frac{p-2}{2(p-1)} \epsilon \IntN f^2 |x|^{2(p-1)} \diff\mu_U.
\end{align*}
Thus, by choosing $\epsilon>0$ small enough such that
$$ 1 > \frac{(p-2)[p^2+p(N+2\gamma-2)]}{p^2(p-1)} \epsilon,$$
we obtain inequality (\ref{Ubounds-2p}) for some constants $C,D>0$.

The case $1<p<2$ requires more care. Let $\phi : \RR^2 \to [0,1]$ be defined by
\begin{equation} \label{aux_fct}
\phi (x) = \begin{cases} 0, & |x| < 1 \\
|x|-1, & 1 \leq |x| \leq 2 \\
1, & |x| > 2.
\end{cases}
\end{equation}
Note that $\phi$ is radial, so $G$-invariant, and $\nabla \phi(x)=\frac{x}{|x|}$ on $1\leq |x| \leq 2$, and it vanishes elsewhere; in particular, $|\nabla \phi| \leq 1$ on $\RR^N$.
Then, writing $f=f\phi + f(1-\phi)$, we have
\begin{equation} \label{Ubound2-specialcase-gen}
\IntN f^2 |x|^{2(p-1)} \diff\mu_U \leq 2 \IntN |f\phi|^2 |x|^{2(p-1)} \diff\mu_U + 2 \IntN |f(1-\phi)|^2 |x|^{2(p-1)} \diff\mu_U,
\end{equation}
and we estimate each of the terms on the right hand side separately. Firstly, by \eqref{Ubounds-2p-intermediate}, we have
\begin{equation*}
\IntN |f\phi|^2 |x|^{2(p-1)} \diff\mu_U 
\leq \frac{4}{p^2} \IntN |\nabla_k (f\phi)|^2 \diff\mu_U + C_p \IntN |f\phi|^2 |x|^{p-2} \diff\mu_U
\end{equation*}
where $C_p:=\frac{2[p^2+p(N+2\gamma-2)]}{p^2}$. By the Leibniz rule (since $\phi$ is $G$-invariant) and using the properties of the function $\phi$, we have 
$$ |\nabla_k(f\phi)|^2 \leq 2 \phi^2 |\nabla_k f|^2 + 2 f^2 |\nabla \phi|^2 
\leq 2 |\nabla_k f|^2 + 2 f^2.$$
Moreover, note that $f\phi=0$ on $|x| \leq 1$ and outside this region we have $|x|^{p-2} \leq 1$ (since $p<2$), so
$$ \IntN |f\phi|^2 |x|^{p-2} \diff\mu_U \leq \IntN f^2 \diff \mu_U.$$
Thus, putting the last three inequalities together, we have obtained
\begin{equation} \label{Ubound2-specialcase-step1}
\IntN |f\phi|^2 |x|^{2(p-1)} \diff \mu_U \leq \frac{8}{p^2} \IntN |\nabla_k f|^2 \diff\mu_U 
+ (\frac{8}{p^2} + C_p) \IntN f^2 \diff\mu_U.
\end{equation}
We now turn to the second term in \eqref{Ubound2-specialcase-gen}. Here we simply note that the function $f(1-\phi)$ is supported on $|x| \leq 2$, and thus
\begin{equation} \label{Ubound2-specialcase-step2}
\IntN |f(1-\phi)|^2 |x|^{2(p-1)} \diff\mu_U \leq 2^{2(p-1)} \IntN |f(1-\phi)|^2 \diff\mu_U \leq 2^{2(p-1)} \IntN f^2 \diff\mu_U.
\end{equation}
Therefore, putting \eqref{Ubound2-specialcase-gen}, \eqref{Ubound2-specialcase-step1} and \eqref{Ubound2-specialcase-step2} together, we obtain the $U$-bound \eqref{Ubounds-2p} for some $C,D>0$.
\end{proof}

From this result we can obtain another type of $U$-bound which will be essential in the later study of log-Sobolev inequalities. Note however that this bound holds in the more restricted range $p\geq 2$.

\begin{cor} \label{Ubounds-2p-cor}
Let $p \geq 2$ and consider the function $U(x)=|x|^p$ and the probability measure $\mu_U:= \frac{1}{Z} e^{-U} \mu_k$, where $Z=\IntN e^{-U} \diff\mu_k$. We then have, for any $f\in H^1_k(\RR^N)$, the inequality
\begin{equation} \label{Ubounds-2p-cor-eqn}
\IntN f^2 |x|^{p} \diff\mu_U \leq C \IntN |\nabla_k f|^2 \diff\mu_k + D \IntN f^2 \diff\mu_U,
\end{equation}
for some constants $C,D>0$.
\end{cor}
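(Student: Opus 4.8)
The plan is to deduce this from the $U$-bound of Proposition~\ref{Ubounds-2p-prop}, exploiting that the exponent $p$ is dominated by the exponent $2(p-1)$ appearing there precisely when $p\geq 2$. The first step is to record the elementary pointwise comparison
$$ |x|^p \leq |x|^{2(p-1)} + 1 \qquad \text{for all } x\in\RR^N, $$
valid whenever $p\geq 2$: for $|x|\leq 1$ the left-hand side is at most $1$, while for $|x|\geq 1$ the inequality $p\leq 2(p-1)$, equivalent to $p\geq 2$, gives $|x|^p\leq |x|^{2(p-1)}$. This is exactly the point at which the restriction $p\geq 2$ enters, and it explains why the corollary lives in a smaller range than Proposition~\ref{Ubounds-2p-prop}.

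Multiplying by $f^2$ and integrating against $\mu_U$ then yields
$$ \IntN f^2|x|^p \diff\mu_U \leq \IntN f^2 |x|^{2(p-1)} \diff\mu_U + \IntN f^2 \diff\mu_U. $$
Since $p\geq 2>1$, Proposition~\ref{Ubounds-2p-prop} applies to the first term on the right (note that $f\in H^1_k(\RR^N)$ gives $f\in H^1_k(\mu_U)$, as $\mu_U$ is dominated by $\mu_k$), bounding it by $C\IntN |\nabla_k f|^2 \diff\mu_U + D\IntN f^2 \diff\mu_U$ for some constants $C,D>0$. Absorbing the extra $\IntN f^2\diff\mu_U$ into the lower-order term produces an inequality of the desired shape, but still with the gradient integral taken against $\mu_U$.

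It remains to replace $\mu_U$ by $\mu_k$ in the gradient term, which is where the asymmetry in the statement is used. Since $U=|x|^p\geq 0$ we have $e^{-U}\leq 1$, so for the nonnegative integrand $|\nabla_k f|^2$,
$$ \IntN |\nabla_k f|^2 \diff\mu_U = \frac{1}{Z}\IntN |\nabla_k f|^2 e^{-U}\diff\mu_k \leq \frac{1}{Z}\IntN |\nabla_k f|^2 \diff\mu_k. $$
Substituting this bound and relabelling the constants yields exactly \eqref{Ubounds-2p-cor-eqn}. I do not expect a serious obstacle here: the only genuinely structural input is the comparison of exponents, which forces $p\geq 2$, whereas the passage from $\mu_U$ to $\mu_k$ is a one-line consequence of $e^{-U}\leq 1$.
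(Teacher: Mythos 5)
Your proof is correct and follows essentially the same route as the paper: both reduce the claim to Proposition \ref{Ubounds-2p-prop} via the observation that $|x|^p\le|x|^{2(p-1)}$ away from the origin precisely when $p\ge 2$, with the region near the origin contributing only a multiple of $\int f^2\,\diff\mu_U$. The only difference is in implementation: the paper reuses the cutoff decomposition $f=f\phi+f(1-\phi)$ from the $1<p<2$ case of the Proposition (splitting at $|x|=1$ and $|x|=2$, at the cost of a factor $2$ from squaring the sum), whereas your pointwise bound $|x|^p\le|x|^{2(p-1)}+1$ achieves the same split in one line and with slightly cleaner constants. You also explicitly justify replacing $\int|\nabla_k f|^2\,\diff\mu_U$ by $\frac{1}{Z}\int|\nabla_k f|^2\,\diff\mu_k$ via $e^{-U}\le 1$; the paper's proof does not address this asymmetry (the $\diff\mu_k$ in \eqref{Ubounds-2p-cor-eqn} is most plausibly a typo for $\diff\mu_U$, and the corollary is indeed later invoked in Theorem \ref{logsob>2_thm} with all terms against $\mu_U$), so your argument covers the statement as literally written.
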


\begin{proof}
We employ the same idea as in the last part of the previous result. Namely, let $\phi$ be the function defined by \eqref{aux_fct} and consider the decomposition $f=f\phi + f(1-\phi)$. We have
\begin{equation} \label{Ubounds-cor-gen}
\IntN f^2 |x|^p \diff\mu_U 
\leq 2\IntN |f\phi|^2 |x|^p \diff\mu_U + 2 \IntN |f(1-\phi)|^2 |x|^p\diff\mu_U.
\end{equation}
The function $f\phi$ vanishes on $|x| \leq 1$ and outside this region we have $|x|^p \leq |x|^{2(p-1)}$ (since $p>2$). Thus
$$ \IntN |f\phi|^2 |x|^p \diff\mu_U \leq \IntN |f\phi|^2 |x|^{2(p-1)} \diff\mu_U \leq \IntN f^2 |x|^{2(p-1)} \diff\mu_U.$$
On the other hand, the function $f(1-\phi)$ is supported on $|x| \leq 2$ and thus
$$ \IntN |f(1-\phi)|^2 |x|^p \diff\mu_U \leq 2^p \IntN f^2 \diff\mu_U.$$
Putting these inequalities together, we obtain
$$\IntN f^2 |x|^p \diff\mu_U 
\leq 2 \IntN f^2 |x|^{2(p-1)} \diff\mu_U + 2^{p+1} \IntN f^2 \diff\mu_U.$$
Finally, using inequality \eqref{Ubounds-2p} for the first term on the right hand side, we obtain \eqref{Ubounds-2p-cor-eqn}, as required.
\end{proof}

The two bounds we have proved so far are both in $L^2(\mu_U)$. In the last result of this section we prove an $L^1(\mu_U)$ bound whose proof will require a slightly different approach. 

\begin{prop} \label{Ubounds-p}
Let $p>1$ and consider the function $U(x)=|x|^p$ and the probability measure $\mu_U:= \frac{1}{Z} e^{-U} \mu_k$, where $Z = \IntN e^{-U} \diff\mu_k$. We then have, for any $f\in W^{1,1}_k(\RR^N)$, the inequality
$$ \int_{\RR^N} |f| \cdot |x|^{p-1} \diff\mu_U  
\leq C \int_{\RR^N} |\nabla_k f| \diff\mu_U + D \int_{\RR^N} |f| \diff\mu_U,$$
for some constants $C,D>0$.
\end{prop}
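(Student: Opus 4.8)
The plan is to prove an $L^1$ analogue of the $U$-bound from Proposition \ref{Ubounds-2p-prop}, adapting the integration-by-parts strategy used throughout this section to the $L^1$ setting. The key structural difference is that we can no longer work with the quantity $\nabla_k(fe^{-U/2})$, because the $L^1$ norm does not factor through a quadratic form; instead we should aim to integrate the identity encoding $T_i$ applied to a product involving $\mathrm{sgn}(f)$ (or $\frac{f}{|f|}$), the weight $x_i|x|^{p-2}$, and $e^{-U}$ against the measure $\mu_U$. Concretely, I would start from the observation that

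\begin{equation*}
p\,|x|^{p-2} x_i = \partial_i U(x) = -e^{U} T_i(e^{-U}),
\end{equation*}

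using that $U$ is $G$-invariant so the Leibniz rule of Lemma \ref{generalisedLeibniz} holds without the correction term. The strategy is then to write $\IntN |f|\,|x|^{p-1}\diff\mu_U$ in a form that, after integration by parts with respect to $\diff\mu_k$, transfers the derivative off the exponential and onto $|f|$.

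\textbf{First} I would set up the integration by parts carefully. Writing $\nabla U = p|x|^{p-2}x$, we have $|\nabla U| = p|x|^{p-1}$, so controlling $\IntN |f|\,|x|^{p-1}\diff\mu_U$ is the same (up to the constant $p$) as controlling $\IntN |f|\,|\nabla U|\diff\mu_U$. The natural move is to consider $\sum_i \IntN \mathrm{sgn}(f)\, |x|^{p-2} x_i\, T_i(e^{-U}) \diff\mu_k / Z$ and integrate by parts to move $T_i$ onto $\mathrm{sgn}(f)|x|^{p-2}x_i$. The term where $T_i$ hits $|f|$ (via $T_i|f| = \mathrm{sgn}(f)T_if$ in a suitable weak sense, bounded by $|\nabla_k f|$) produces the gradient term on the right-hand side; the terms where $T_i$ hits $x_i|x|^{p-2}$ produce lower-order powers of $|x|$ together with the difference (reflection) contributions coming from the non-local part of the Dunkl operator.

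\textbf{The main obstacle} I expect is twofold. First, the lack of a genuine Leibniz rule means that applying $T_i$ to $\mathrm{sgn}(f)\,x_i\,|x|^{p-2}$ generates reflection difference terms $\frac{(g(x)-g(\sigma_\alpha x))(h(x)-h(\sigma_\alpha x))}{\alx}$, exactly as in \eqref{leibnizxi}; these must be shown to be controlled, and here the $G$-invariance of $|x|^{p-2}$ and the homogeneity degree-$(p-2)$ scaling will be what makes the bookkeeping close, just as $2XY\le X^2+Y^2$ and the elementary reflection estimates closed the argument in the $L^2$ proofs. Second, after integration by parts the derivative on $x_i|x|^{p-2}$ produces a residual term of the shape $\IntN |f|\,|x|^{p-2}\diff\mu_U$ (a lower power than the target $|x|^{p-1}$), and this must be absorbed. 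For $p\ge 2$ one has $|x|^{p-2}\le 1 + |x|^{p-1}$, so splitting into the regions $|x|\le 1$ and $|x|>1$ and absorbing a small multiple of the $|x|^{p-1}$ part into the left-hand side suffices; for $1<p<2$ one should instead reuse the cutoff function $\phi$ from \eqref{aux_fct} exactly as in the previous two proofs, writing $f = f\phi + f(1-\phi)$, handling the compactly-supported piece $f(1-\phi)$ by the trivial bound $|x|^{p-1}\le 2^{p-1}$ on $|x|\le 2$, and treating $f\phi$ (which vanishes near the origin, where $|x|^{p-2}\le 1$) by the integration-by-parts estimate. Collecting these and relabelling constants yields the claimed inequality with some $C,D>0$.
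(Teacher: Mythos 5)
Your overall architecture (transfer the derivative from $e^{-U}$ back onto the function by integration by parts against $\diff\mu_k$, then absorb a lower-order radial residue using the cutoff $\phi$ from \eqref{aux_fct}) matches the paper's, and the cutoff step is handled exactly as in the actual proof. But the core integration-by-parts step, as you set it up, has two gaps that are specific to the Dunkl setting and that the paper's proof is engineered to avoid. First, you invoke $T_i|f|=\mathrm{sgn}(f)\,T_if$ "in a suitable weak sense, bounded by $|\nabla_k f|$". This chain rule fails for Dunkl operators: the difference part of $T_i$ gives $T_i|f|(x)=\mathrm{sgn}(f(x))\partial_if(x)+\sum_{\alpha\in R_+}k_\alpha\alpha_i\frac{|f(x)|-|f(\sigma_\alpha x)|}{\langle\alpha,x\rangle}$, and the local and non-local parts need not line up in sign, so $|\nabla_k|f||\leq|\nabla_k f|$ is false pointwise (a one-dimensional example with $f$ changing sign across the reflection hyperplane already breaks it). The paper never differentiates $|f|$: it derives the bound for $\int f|x|^{p-1}\diff\mu_U$ with $f$ itself, and only at the end splits $f=f_+-f_-$ and adds the two inequalities. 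Second, your plan to apply $T_i$ to a product containing $\mathrm{sgn}(f)$ and $x_i$ forces the generalized Leibniz rule of Lemma \ref{generalisedLeibniz} on non-$G$-invariant factors, producing terms of the form $\frac{(\mathrm{sgn}(f(x))-\mathrm{sgn}(f(\sigma_\alpha x)))(\cdot)}{\langle\alpha,x\rangle}$. You acknowledge these but offer no way to control them, and there is none along these lines: the sign difference is $O(1)$ near the reflection hyperplanes while the denominator vanishes, so these terms are not dominated by $\int|\nabla_k f|\diff\mu_U$ or $\int|f|\diff\mu_U$.

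The paper sidesteps both problems with one choice you are missing: it pairs the identity $\nabla_k(fe^{-U})=e^{-U}\nabla_k f-pf|x|^{p-1}e^{-U}\nabla(|x|)$ with the vector field $\nabla(|x|)=\nabla_k(|x|)$, the Dunkl gradient of the $G$-invariant function $|x|$. Integrating by parts then lands the second derivative entirely on $|x|$, giving the explicit, correction-free quantity $\Delta_k(|x|)=(N+2\gamma-1)/|x|$, which is bounded away from the origin — whence the preliminary reduction to $f$ vanishing on the unit ball, and then the $\phi$-decomposition you correctly anticipated. Note also a bookkeeping slip in your setup: pairing with the weight $x_i|x|^{p-2}$ against $T_i(e^{-U})=-p|x|^{p-2}x_ie^{-U}$ produces $|x|^{2(p-1)}$, not the target $|x|^{p-1}$; the correct pairing weight is $\partial_i(|x|)=x_i/|x|$, which is unit length and yields exactly $p|x|^{p-1}$.
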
 

\begin{proof}
In order to avoid a singularity that will arise at the origin, we first consider a function $f$ that vanishes on the unit ball. As before, we start with identity (\ref{logsobidentity}). Noting that in this case we have
$$ \nabla U (x) = p |x|^{p-1} \nabla (|x|),$$
the identity above now reads
$$ \nabla_k (fe^{-U}) = e^{-U} \nabla_k f - p f |x|^{p-1} e^{-U} \nabla(|x|).$$
Taking inner product with $\nabla(|x|)$ and integrating on both sides, we have
\begin{equation} \label{logsobgen}
\begin{aligned}
\frac{1}{Z} \int_{\RR^N} & \nabla (|x|) \cdot \nabla_k (fe^{-U}) \diff\mu_k
\\
&\qquad \qquad \qquad
= \int_{\RR^N} \nabla_k(|x|) \cdot \nabla_k f \diff\mu_U - p \int_{\RR^N} |\nabla(|x|)|^2 f |x|^{p-1} \diff\mu_U.
\end{aligned}
\end{equation}
We can use integration by parts on the left hand side to obtain
$$ \frac{1}{Z} \int_{\RR^N} \nabla (|x|) \cdot \nabla_k (fe^{-U}) \diff\mu_k
= - \int_{\RR^N} \Delta_k(|x|) f \diff\mu_U.$$
Replacing this in (\ref{logsobgen}), and using also the fact that $|\nabla(|x|)|=1$ for $x \neq 0$, we have
\begin{align*}
\int_{\RR^N} f|x|^{p-1} \diff\mu_U 
&= \frac{1}{p} \int_{\RR^N} \nabla (|x|) \cdot \nabla_k f \diff\mu_U + \frac{1}{p} \int_{\RR^N} \Delta_k(|x|) f \diff\mu_U
\\
&\leq \frac{1}{p} \int_{\RR^N} |\nabla_k f| \diff\mu_U + \frac{1}{p} \int_{\RR^N} \Delta_k(|x|) f \diff\mu_U.
\end{align*}

Finally, we have
$$ T_i^2(|x|) = T_i \left(\frac{x_i}{|x|} \right) = \frac{1}{|x|} - \frac{x_i^2}{|x|^3} + \Suma k_\alpha \frac{\alpha_i^2}{|x|}, $$
so
$$ \Delta_k(|x|) = (N+2\gamma-1) \frac{1}{|x|}.$$
Therefore, from the above we deduce that (recall that $f$ vanishes on the unit ball)
$$ \int_{\RR^N} f|x|^{p-1} \diff\mu_U  
\leq \frac{1}{p} \int_{\RR^N} |\nabla_k f| \diff\mu_U + \frac{N+2\gamma-1}{p} \int_{\RR^N} |f| \diff\mu_U.$$
Writing $f=f_+-f_-$, where $f_+(x)=\max(f(x),0)$ and $f_-(x)=-\min(f(x),0)$, we can apply this inequality to $f_+$ and $f_-$ separately. Adding the two resulting inequalities, we have
$$ \int_{\RR^N} |f| \cdot |x|^{p-1} \diff\mu_U  
\leq C_1 \int_{\RR^N} |\nabla_k f| \diff\mu_U + D_1 \int_{\RR^N} |f| \diff\mu_U,$$
where $C_1=\frac{1}{p}$ and $D_1=\frac{N+2\gamma-1}{p}$.

Having proved the result for functions that vanish on the unit ball, let us now consider a general function $f\in L^1(\diff\mu_U)$. To prove this more general result, we once again employ the method from the end of the proof of Proposition \ref{Ubounds-2p-prop}. More precisely, let $\phi$ be the function defined in \eqref{aux_fct} and consider $f=\phi f +(1-\phi)f$; the first term vanishes on the unit ball so the above can be applied to it, while the second term has compact support and it will be easy to bound. We have
\begin{align*}
\int_{\RR^N} |f| \cdot |x|^{p-1} \diff\mu_U 
&\leq \int_{\RR^N} |\phi f| \cdot |x|^{p-1} \diff\mu_U + \int_{\RR^N} |(1-\phi) f| \cdot |x|^{p-1} \diff\mu_U
\\
&\leq C_1 \int_{\RR^N} |\nabla_k(\phi f)| \diff\mu_U + D_1 \int_{\RR^N} |\phi f| \diff\mu_U + 2^{p-1} \int_{\RR^N} |f| \diff\mu_U
\\
&\leq C_1 \int_{\RR^N} |\nabla_k f| \diff\mu_U +(C_1+D_1+2^{p-1}) \int_{\RR^N} |f| \diff\mu_U.
\end{align*}
Here, in the last step, we used the Leibniz rule (which holds because $\phi$ is $G$-invariant) and the fact that $|\nabla \phi| \leq 1$. This completes the proof.
\end{proof}

\section{Log-Sobolev inequalities for Boltzmann-type measures} \label{SEC:weightedlogsob}

In this section we use the $U$-bounds obtained above to deduce log-Sobolev inequalities. To begin with, from Proposition \ref{Ubounds-2p-prop} and Corollary \ref{Ubounds-2p-cor} we obtain a log-Sobolev inequality for probability measures $\diff\mu_U = \frac{1}{Z} e^{-U(x)} \diff \mu_k$, for $U(x)=|x|^p$ and with $ p \geq 2$, and in the range $1\leq p \leq 2$ we prove a $\Phi$-Sobolev inequality. Similarly, from Proposition \ref{Ubounds-p} we obtain a $\Phi$-Sobolev inequality in $L^1(\mu_U)$ for general $1<p<\infty$. The approach in these results is similar to that of Theorem \ref{logsobolev}: first employing Jensen's inequality to take the logarithm outside the integral, and then using the classical Sobolev inequality. $U$-bounds will be used to control residual terms arising from the introduction of a weight.

\begin{thm} \label{logsob>2_thm}
Let $U(x)=|x|^p$ for $p \geq 2$ and consider the probability measure $\diff\mu_U = \frac{1}{Z} e^{-U} \diff\mu_k$, where $Z=\IntN e^{-U} \diff\mu_k$. Then there exist some constants $C_1,C_2>0$ such that for all $f\in H^1_k(\mu_U)$, we have the inequality
\begin{equation} \label{logsob>2} 
\IntN f^2 \log \frac{f^2}{\int f^2 \diff\mu_U} \diff\mu_U 
\leq C_1 \IntN |\nabla_k f|^2 \diff\mu_U
+ C_2 \IntN f^2 \diff\mu_U.
\end{equation}
\end{thm}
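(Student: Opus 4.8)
The plan is to mimic the proof of Theorem \ref{Gaussianlog-sob}, since the statement is exactly its generalisation from $p=2$ to $p \geq 2$, with the Gaussian $U$-bound \eqref{keyineq} replaced by the more general $U$-bound from Corollary \ref{Ubounds-2p-cor}. The author signposts this in the text preceding the theorem: first apply Jensen and Sobolev (as in Theorem \ref{logsobolev}), then control the residual weight term with a $U$-bound.

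Concretely, I would proceed as follows. First, I apply Theorem \ref{logsobolev} to the substituted function $\frac{1}{\sqrt{Z}} f e^{-U/2}$. Exactly as in the proof of Theorem \ref{Gaussianlog-sob}, this produces
\begin{align*}
\IntN f^2 \log \frac{f^2}{\int f^2 \diff\mu_U} \diff\mu_U
&\leq \frac{\epsilon}{Z} \IntN |\nabla_k(fe^{-U/2})|^2 \diff\mu_k \\
&\qquad + (C(\epsilon)+\log Z) \IntN f^2 \diff\mu_U + \IntN f^2 U \diff\mu_U.
\end{align*}
Next I expand $\nabla_k(fe^{-U/2}) = e^{-U/2}\nabla_k f - \tfrac12 f e^{-U/2}\nabla U$ using the Leibniz rule (valid since $U=|x|^p$ is radial, hence $G$-invariant), giving the bound $\tfrac{1}{Z}\IntN |\nabla_k(fe^{-U/2})|^2 \diff\mu_k \leq 2\IntN |\nabla_k f|^2 \diff\mu_U + \tfrac12 \IntN f^2 |\nabla U|^2 \diff\mu_U$. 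Since $\nabla U = p|x|^{p-2}x$ we have $|\nabla U|^2 = p^2 |x|^{2(p-1)}$, and also $U=|x|^p$. Thus after this step the right-hand side is controlled by $\IntN |\nabla_k f|^2 \diff\mu_U$, $\IntN f^2 \diff\mu_U$, and the two weighted terms $\IntN f^2 |x|^{2(p-1)}\diff\mu_U$ and $\IntN f^2 |x|^p \diff\mu_U$.

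The decisive step is then to absorb both weighted integrals. For the first I invoke Proposition \ref{Ubounds-2p-prop} (the $U$-bound $\IntN f^2|x|^{2(p-1)}\diff\mu_U \leq C\IntN|\nabla_k f|^2\diff\mu_U + D\IntN f^2\diff\mu_U$, valid for all $p>1$), and for the second I invoke Corollary \ref{Ubounds-2p-cor} (the bound $\IntN f^2|x|^p\diff\mu_U \leq C\IntN|\nabla_k f|^2\diff\mu_U + D\IntN f^2\diff\mu_U$, valid for $p\geq 2$). This is precisely why the hypothesis $p\geq 2$ is needed here rather than $p>1$: Corollary \ref{Ubounds-2p-cor} uses $|x|^p \leq |x|^{2(p-1)}$ away from the origin, which requires $p\geq 2$. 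Substituting both $U$-bounds into the estimate, every weighted term is dominated by a Dirichlet-form term plus an $L^2(\mu_U)$ term, and relabelling the accumulated constants yields \eqref{logsob>2}.

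The main obstacle is bookkeeping rather than conceptual: I must ensure the coefficient of $\IntN |\nabla_k f|^2 \diff\mu_U$ stays finite after feeding the $U$-bounds back in. Unlike in Theorem \ref{logsobolev}, where $\epsilon$ could be sent to $0$, here $\epsilon$ may be fixed to any convenient positive value since we only need finite constants $C_1,C_2$ (the inequality is non-tight). The one subtlety to watch is that the $f^2 U$ term arising directly from the exponential substitution in Theorem \ref{logsobolev} appears \emph{without} a factor of $\epsilon$, so it cannot be made small; fortunately it is exactly of the form handled by Corollary \ref{Ubounds-2p-cor}, so it too is absorbed cleanly. No step requires $\epsilon$ to be small, so the proof goes through directly once the two $U$-bounds are in hand.
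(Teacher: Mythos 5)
Your proposal is correct and follows essentially the same route as the paper: apply Theorem \ref{logsobolev} to $\frac{1}{\sqrt{Z}}fe^{-U/2}$, expand with the Leibniz rule, and absorb the residual terms $\IntN f^2|x|^{2(p-1)}\diff\mu_U$ and $\IntN f^2|x|^p\diff\mu_U$ via Proposition \ref{Ubounds-2p-prop} and Corollary \ref{Ubounds-2p-cor} respectively. Your remarks on why $p\geq 2$ is needed and why $\epsilon$ need not be sent to zero match the paper's (terser) argument exactly.
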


\begin{proof}
We apply Theorem \ref{logsobolev} to the function $\frac{1}{\sqrt{Z}} fe^{-U/2}$ and thus we obtain
\begin{align*}
\IntN f^2 \log \frac{f^2}{\int f^2 \diff\mu_U} \diff\mu_U 
&\leq 2\epsilon \IntN |\nabla_k f|^2 \diff\mu_U 
+(C(\epsilon) + \log Z) \IntN f^2 \diff\mu_U
\\
&\qquad
+ \IntN f^2 U \diff\mu_U + 2\epsilon \IntN f^2 |\nabla U|^2 \diff\mu_U.
\end{align*}
Note that in this case we have $|\nabla U|^2 = |x|^{2(p-1)}$ and thus by applying Proposition \ref{Ubounds-2p-prop} and Corollary \ref{Ubounds-2p-cor}, we obtain inequality \eqref{logsob>2} for some constants $C_1,C_2>0$, as required. 
\end{proof}

\begin{thm} \label{weightedlogsobolev1}
Let $U(x)=|x|^p$ for $1 < p \leq 2$ and consider the probability measure $\diff\mu_U = \frac{1}{Z} e^{-U} \diff\mu_k$, where $Z=\IntN e^{-U} \diff\mu_k$. Let $s=2\frac{p-1}{p}$. Then there exist some constants $C_1,C_2>0$ such that for all $f\in H^1_k(\mu_U)$, we have the inequality
$$ \IntN f^2 \left| \log \frac{f^2}{\int f^2 \diff\mu_U} \right|^s \diff\mu_U \leq C_1 \IntN |\nabla_k f|^2 \diff\mu_U + C_2 \IntN f^2 \diff\mu_U.$$
\end{thm}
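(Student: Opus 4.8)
The plan is to follow the same architecture as Theorem \ref{logsob>2_thm}, applying the base inequality from Theorem \ref{logsobolev} to the weighted function $\frac{1}{\sqrt{Z}} f e^{-U/2}$, but accounting for the fact that in the range $1 < p \leq 2$ the weight $|\nabla U|^2 = p^2 |x|^{2(p-1)}$ has exponent $2(p-1) < 2$, so the relevant $U$-bound is the weaker one from Proposition \ref{Ubounds-2p-prop} (the stronger Corollary \ref{Ubounds-2p-cor} is unavailable here). First I would reproduce the computation from Theorem \ref{logsob>2_thm} to arrive at the bound
\begin{align*}
\IntN f^2 \log \frac{f^2}{\int f^2 \diff\mu_U} \diff\mu_U
&\leq 2\epsilon \IntN |\nabla_k f|^2 \diff\mu_U + (C(\epsilon) + \log Z) \IntN f^2 \diff\mu_U
\\
&\qquad + \IntN f^2 U \diff\mu_U + 2\epsilon p^2 \IntN f^2 |x|^{2(p-1)} \diff\mu_U,
\end{align*}
and then apply Proposition \ref{Ubounds-2p-prop} to the last term. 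The residual term $\IntN f^2 U \diff\mu_U = \IntN f^2 |x|^p \diff\mu_U$ also needs control; since $p \leq 2(p-1)$ fails for $p < 2$, I would instead bound $|x|^p$ by a combination of $|x|^{2(p-1)}$ (on $|x|\geq 1$, interpolating or using $|x|^p \leq 1 + |x|^{2(p-1)}$ for $p>1$) and a constant, again absorbing via Proposition \ref{Ubounds-2p-prop}. This yields a plain (tight-looking) log-Sobolev bound with the ordinary logarithm — but that is \emph{not} what the theorem asserts.

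The essential point, and the reason the exponent $s = 2\frac{p-1}{p} < 1$ appears, is that for $1 < p < 2$ a full log-Sobolev inequality is genuinely false, so the left-hand side must be weakened to $\left|\log\frac{f^2}{\int f^2}\right|^s$ with $s<1$. The key step will therefore be to convert the entropy-type quantity into the defective power-$s$ quantity, and here the natural route is to first establish the \emph{defective} (non-tight) log-Sobolev inequality as above, and then invoke a functional-analytic mechanism that trades a defective entropy estimate against a weight $|x|^{p}$ for an $s$-th power of the logarithm. Concretely I expect the argument to hinge on a pointwise/elementary inequality comparing $t\,|\log t|^s$ with $\epsilon\, t \log t + C(\epsilon)\, t$ together with the weighted control $\IntN f^2 |x|^{p} \diff\mu_U \leq C\IntN|\nabla_k f|^2\diff\mu_U + D\IntN f^2\diff\mu_U$ that follows by combining \eqref{Ubounds-2p-intermediate} with Proposition \ref{Ubounds-2p-prop}. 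The relation $s = 2\frac{p-1}{p}$ should emerge by matching the growth rate $|x|^{2(p-1)}$ of the available $U$-bound against the logarithmic decay that $e^{-|x|^p}$ forces on the tails: on the support where $|x|^p \approx \log\frac{1}{\text{density}}$, one has $|x|^{2(p-1)} \approx \left(\log\right)^{2(p-1)/p} = \left(\log\right)^s$.

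The main obstacle will be this last conversion: producing the factor $\left|\log\frac{f^2}{\int f^2 \diff\mu_U}\right|^s$ rather than the linear $\log$. The cleanest strategy is likely to split the domain of integration according to whether $\frac{f^2}{\int f^2 \diff\mu_U}$ is large or small. Where the ratio is bounded, $|\log(\cdot)|^s$ is controlled by a constant times $f^2$ and is harmless. Where the ratio is large, I would use that $|\log t|^s \leq \eta \log t + C_\eta$ (valid since $s<1$, for $t$ large) to dominate the power-$s$ logarithm by the ordinary entropy, after which the defective log-Sobolev inequality already proved closes the estimate, with the weight $|x|^p$ term absorbed by the $U$-bound. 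The delicate bookkeeping — ensuring that the small-ratio region's contribution, including the reference normalisation $\int f^2 \diff\mu_U$, does not destroy the constants and that the exponent $s$ is exactly $2\frac{p-1}{p}$ and not merely some $s < 1$ — is where the real work lies; everything upstream is a direct transcription of the proof of Theorem \ref{logsob>2_thm} combined with Proposition \ref{Ubounds-2p-prop}.
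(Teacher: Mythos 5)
Your proposal contains a genuine gap at its first step. You propose to first establish an ordinary (linear-log) defective log-Sobolev inequality for $\mu_U$ by transcribing the proof of Theorem \ref{logsob>2_thm}, controlling the residual term $\IntN f^2 U \diff\mu_U = \IntN f^2|x|^p\diff\mu_U$ via the claimed bound $|x|^p \leq 1 + |x|^{2(p-1)}$. But for $1<p<2$ one has $2(p-1)<p$, so $|x|^{2(p-1)} \ll |x|^p$ as $|x|\to\infty$ and the claimed pointwise bound is false precisely in the range at issue (you even note that $p\leq 2(p-1)$ fails here, which contradicts the bound you then invoke). This is not a repairable bookkeeping issue: the intermediate linear-log defective inequality you want is provably \emph{false} for $1<p<2$, since together with the Poincar\'e inequality of Proposition \ref{weighteddunklpoincare} and Rothaus's lemma it would tighten to \eqref{tightlogsob>2}, which the paper's Remark in Section \ref{SEC:appl} shows is impossible for $p<2$ via Herbst's argument. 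Consequently your second stage, which dominates $|\log t|^s$ by $\eta\log t + C_\eta$ and feeds it into that defective inequality, has no valid input to work with.

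The paper avoids this entirely by never forming the linear-log entropy. Setting $h=\frac{1}{\sqrt{Z}}fe^{-U/2}$, it writes $\log\frac{f^2}{\int f^2\diff\mu_U} = \log\frac{h^2}{\int h^2\diff\mu_k} + U + \log Z$ and applies the subadditivity $(X+Y)^s\leq X^s+Y^s$ (valid since $s\in(0,1]$) \emph{before} integrating, so that the residual weight appearing is $U^s = |x|^{ps} = |x|^{2(p-1)}$ --- exactly the quantity controlled by Proposition \ref{Ubounds-2p-prop} --- rather than $U=|x|^p$, which is not controllable. The main term is then handled by running the Jensen argument of Theorem \ref{logsobolev} directly on the \emph{concave} function $(\log_+ t)^s$ (after the elementary reduction $x|\log x|^s\leq x(\log_+x)^s+D$), followed by $|\log_+ x|^s\leq\epsilon x+\overline{C}(\epsilon)$ and the Sobolev inequality. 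Your heuristic for why $s=2\frac{p-1}{p}$ is the right exponent (matching $|x|^{2(p-1)}$ against $(\log(1/\text{density}))^{2(p-1)/p}$) is sound, but the mechanism that realises it is the early application of the $s$-th power to the decomposition of the logarithm, not a post hoc conversion of a linear-log estimate.
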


\begin{proof}
Consider the function $h=\frac{1}{\sqrt{Z}} fe^{-U/2}$. Then $\IntN f^2 \diff\mu_U = \IntN h^2 \diff\mu_k$, and 
\begin{equation} \label{logsob2-1}
\begin{aligned}
\IntN f^2 & \left| \log \frac{f^2}{\int f^2 \diff\mu_U} \right|^s \diff\mu_U
= \IntN h^2 \left| \log \frac{h^2}{\int h^2 \diff\mu_k} + U + \log Z \right|^s \diff\mu_k
\\
&\qquad \qquad
\leq \IntN h^2 \left| \log \frac{h^2}{\int h^2 \diff\mu_k}\right|^s \diff\mu_k + \IntN h^2 U^s \diff\mu_k
+ |\log Z|^s \IntN h^2 \diff\mu_k,
\end{aligned}
\end{equation}
where in the last inequality we used the fact that since $s\in (0,1]$ we have $(X+Y)^s \leq X^s + Y^s$ for $X,Y>0$. 

Before we start the usual procedure of applying Jensen's inequality, we note that the function $x \left|\log x \right|^s$ is bounded on $(0,1)$, and let 
$$D=\displaystyle\sup_{x\in (0,1)} x \left|\log x \right|^s < \infty.$$
Consider now the function $\log_+ x := \max\{0,\log x\}$. Then the above observation implies that
$$ x \left|\log x \right|^s \leq x(\log_+ x)^s + D \qquad \text{ for all } x>0.$$
With this in mind, we have
\begin{equation} \label{logsob2-2}
\begin{aligned}
\IntN h^2 \left| \log \frac{h^2}{\int h^2 \diff\mu_k} \right|^s \diff\mu_k
&= \IntN h^2 \diff\mu_k \cdot \IntN \frac{h^2}{\int h^2 \diff\mu_k} \left| \log \frac{h^2}{\int h^2 \diff\mu_k} \right|^s \diff\mu_k
\\
&\leq \IntN h^2 \diff\mu_k \cdot \left[ \IntN \frac{h^2}{\int h^2 \diff\mu_k} \left( \log_+ \frac{h^2}{\int h^2 \diff\mu_k} \right)^s \diff\mu_k + D\right].
\end{aligned}
\end{equation}

For the fixed function $h$ the measure $\frac{h^2}{\int h^2 \diff\mu_k} \diff\mu_k$ is a probability measure and so we can apply Jensen's inequality to the concave function $(\log_+ t)^s$ in the below as follows
\begin{align*}
\IntN \frac{h^2}{\int h^2 \diff\mu_k} \left( \log_+ \frac{h^2}{\int h^2 \diff\mu_k} \right)^s \diff\mu_k
&= \frac{1}{\delta^s}\IntN \frac{h^2}{\int h^2 \diff\mu_k} \left( \log_+ \left(\frac{h^2}{\int h^2 \diff\mu_k}\right)^\delta \right)^s \diff\mu_k
\\
&\leq \frac{1}{\delta^s} \left( \log_+ \IntN \left(\frac{h^2}{\int h^2 \diff\mu_k}\right)^{1+\delta} \diff\mu_k \right)^s 
\\
&= \frac{(1+\delta)^s}{\delta^s} \left( \log_+ \frac{\norm{h}_{2+2\delta}^2}{\norm{h}_2^2}  \right)^s.
\end{align*}

By standard elementary means we can show that there exists a decreasing function $\overline{C}(\epsilon)$ defined for all $\epsilon>0$ such that the inequality
$$ |\log_+ x|^s \leq \epsilon x + \overline{C}(\epsilon)$$
holds for all $x>0$. Applying this in the previous inequality, we obtain
\begin{equation} \label{logsob2-3}
\begin{aligned}
\IntN \frac{h^2}{\int h^2 \diff\mu_k} \left( \log_+ \frac{h^2}{\int h^2 \diff\mu_k} \right)^s \diff\mu_k
&\leq \frac{(1+\delta)^s}{\delta^s} \left( \epsilon \frac{\norm{h}_{2+2\delta}^2}{\norm{h}_2^2}  + \overline{C}(\epsilon) \right)
\\
&\leq \frac{(1+\delta)^s}{\delta^s} \frac{1}{\int h^2 \diff\mu_k} \left[ \epsilon C_{DS}^2 \norm{\nabla_k h}_2^2 + \overline{C}(\epsilon) \norm{h}_2^2 \right].
\end{aligned}
\end{equation}
Here in the last step we used the Sobolev inequality which holds if we choose $\delta>0$ such that $2+2\delta = \frac{2(N+2\gamma)}{N+2\gamma-2}$.

Next, we have
\begin{equation} \label{logsob2-4}
\norm{\nabla_k h}_2^2 = \frac{1}{Z} \IntN |\nabla_k(fe^{-U/2})|^2 \diff\mu_k 
\leq 2 \IntN |\nabla_k f|^2 \diff\mu_U + \frac{1}{2} \IntN f^2 |\nabla U|^2 \diff\mu_U.
\end{equation}

Combining (\ref{logsob2-1}), (\ref{logsob2-2}), (\ref{logsob2-3}) and (\ref{logsob2-4}), we have
\begin{align*}
\IntN f^2 \left| \log \frac{f^2}{\int f^2 \diff\mu_U} \right|^s \diff\mu_U
&\leq \left(\frac{N+2\gamma}{2}\right)^s 2C_{DS}^2\epsilon  \IntN |\nabla_k f|^2 \diff\mu_U 
\\
&\qquad
+ \left[\left(\frac{N+2\gamma}{2}\right)^s \overline{C}(\epsilon) + |\log Z|^s \right] \IntN f^2 \diff\mu_U
\\
&\qquad
+ \left(\frac{N+2\gamma}{2}\right)^s \frac{C_{DS}^2}{2}\epsilon \IntN f^2 |\nabla U|^2 \diff\mu_U
+ \IntN f^2 U^s \diff\mu_U.
\end{align*}
But $|\nabla U|= p |x|^{p-1}$ and $U^s = |x|^{2(p-1)}$, so the last two terms can be computed as follows
$$ \IntN f^2 U^s \diff\mu_U = \IntN f^2 |x|^{2(p-1)} \diff\mu_U = \frac{1}{p^2} \IntN f^2 |\nabla U|^2 \diff\mu_U.$$
Finally, from Proposition \ref{Ubounds-2p-prop} and a relabelling of the constants, we obtain
$$ \IntN f^2 \left| \log \frac{f^2}{\int f^2 \diff\mu_U} \right|^s \diff\mu_U
\leq \epsilon \IntN |\nabla_k f|^2 \diff\mu_U + C(\epsilon) \IntN f^2 \diff\mu_U,$$
where $C(\epsilon) = A_1 \overline{C}(a\epsilon) + A_2 \epsilon + A_3$, for some constants $A_1, A_2,A_3, a>0$. Choosing small $\epsilon >0$ we have $C(\epsilon)>0$. This concludes the proof.
\end{proof}

\begin{thm}
Let $U(x)=|x|^p$ with $1\leq p <\infty$ and consider the finite measure $\diff\mu_U = e^{-U} \diff\mu_k$. Let $s=\frac{p-1}{p}$. Then there exist some constants $C_1,C_2>0$ such that for all $f\in W^{1,1}_k(\mu_U)$, we have the inequality
$$ \IntN f \left|\log \frac{|f|}{\int |f| \diff\mu_U} \right|^s \diff\mu_U \leq C_1 \IntN |\nabla_k f| \diff\mu_U + C_2 \IntN |f| \diff\mu_U.$$
\end{thm}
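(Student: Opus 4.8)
The plan is to follow the scheme of Theorem~\ref{weightedlogsobolev1}, but carried out in $L^1(\mu_U)$ and with Proposition~\ref{Ubounds-p} replacing the $L^2$ $U$-bound. Since both sides involve only $|f|$, I first reduce to $f\ge 0$. For such $f$ I substitute $h=fe^{-U}$, so that $\IntN f\diff\mu_U=\IntN h\diff\mu_k$ and $\log\frac{f}{\int f\diff\mu_U}=\log\frac{h}{\int h\diff\mu_k}+U$. The left-hand side then becomes $\IntN h\left|\log\frac{h}{\int h\diff\mu_k}+U\right|^s\diff\mu_k$, and because $s=\frac{p-1}{p}\in[0,1)$ the subadditivity $(X+Y)^s\le X^s+Y^s$ for $X,Y\ge 0$ splits it into a free entropy term $\IntN h\left|\log\frac{h}{\int h\diff\mu_k}\right|^s\diff\mu_k$ and a weight term $\IntN hU^s\diff\mu_k$. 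The weight term is handled at once: since $ps=p-1$ we have $U^s=|x|^{p-1}$, whence $\IntN hU^s\diff\mu_k=\IntN f|x|^{p-1}\diff\mu_U$, which is exactly the quantity bounded by Proposition~\ref{Ubounds-p}.

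For the free entropy term I run the Jensen-plus-Sobolev argument of Theorem~\ref{weightedlogsobolev1} in its $L^1$ form. First the elementary bound $x|\log x|^s\le x(\log_+x)^s+D$ replaces $|\log|$ by $\log_+$ up to an additive constant; factoring out $\|h\|_1:=\IntN h\diff\mu_k$ and applying Jensen's inequality to the concave function $(\log_+t)^s$ against the probability measure $\frac{h}{\|h\|_1}\diff\mu_k$, with an inserted parameter $\delta>0$, produces a bound of the shape $\frac{(1+\delta)^s}{\delta^s}\|h\|_1\left(\log_+\frac{\|h\|_{1+\delta}}{\|h\|_1}\right)^s+D\|h\|_1$. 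I then linearise via $(\log_+x)^s\le\epsilon x+\overline{C}(\epsilon)$ and choose $\delta$ so that $1+\delta=\frac{N+2\gamma}{N+2\gamma-1}$; the Dunkl Sobolev inequality with $p=1$ gives $\|h\|_{1+\delta}\le C_{DS}\|\nabla_k h\|_1$, leaving a bound of the form $\epsilon C_{DS}\frac{(1+\delta)^s}{\delta^s}\|\nabla_k h\|_1+C\|h\|_1$. Note $\|h\|_1=\IntN f\diff\mu_U$, so the second term is already of the desired type.

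It remains to control $\|\nabla_k h\|_1$ in terms of $f$. As $U$ is radial and hence $G$-invariant, the Leibniz rule gives $\nabla_k h=e^{-U}\nabla_k f-fe^{-U}\nabla U$, so that $\|\nabla_k h\|_1\le\IntN|\nabla_k f|\diff\mu_U+p\IntN f|x|^{p-1}\diff\mu_U$, the last term being once more controlled by Proposition~\ref{Ubounds-p}. Substituting this back and gathering the uses of the $U$-bound together with the free terms, then taking $\epsilon$ small enough, yields the asserted inequality with some $C_1,C_2>0$.

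I expect the delicate points to be bookkeeping rather than a single hard estimate. The crux is that the $L^1$ normalisation $\frac{h}{\|h\|_1}$ must, after Jensen and the $p=1$ Sobolev inequality, produce a term that is \emph{linear} in $\|\nabla_k h\|_1$: this is the $L^1$ counterpart of the degree-two matching in Theorem~\ref{weightedlogsobolev1}, and it works precisely because the exponent $1+\delta$ generated by Jensen is absorbed by the logarithm, leaving $\|h\|_{1+\delta}$ to the first power, which the Sobolev inequality sends to the first power of $\|\nabla_k h\|_1$. The other point needing care is the reduction to $f\ge 0$ and the integrability justifying the change of variables and Jensen's inequality; as in the earlier theorems this is cleanest first for a dense class of $f$, with the constant $D$ absorbed into $C_2$, and then extended by approximation.
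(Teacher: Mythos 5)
Your proposal is correct and is essentially the paper's own proof: the paper only sketches this theorem as ``similar to Theorem \ref{weightedlogsobolev1} but using the $L^1$ Sobolev inequality $\norm{h}_q \leq C\norm{\nabla_k h}_1$ with $q=\frac{N+2\gamma}{N+2\gamma-1}$ and the $U$-bound of Proposition \ref{Ubounds-p}'', which is exactly the substitution $h=fe^{-U}$, subadditivity of $x\mapsto x^s$, Jensen-plus-Sobolev, and Leibniz/$U$-bound bookkeeping you carry out. Your observation that the exponent $1+\delta$ produced by Jensen is absorbed by the logarithm, so that the $p=1$ Sobolev inequality yields a term linear in $\norm{\nabla_k h}_1$, is precisely the degree matching that makes the $L^1$ version work.
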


\begin{proof}
The proof is similar to that of the previous result except for in this case we rely on the Sobolev inequality
$$ \norm{h}_q \leq C \norm{\nabla_k h}_1$$
where $q=\frac{N+2\gamma}{N+2\gamma-1}$, and the $U$-bound of Proposition \ref{Ubounds-p}.
\end{proof}

\section{Poincar\'e inequalities} \label{poincaresection}

In this section we discuss Poincar\'e inequalities for Dunkl operators. These will be used in the next section to improve some of our previous log-Sobolev inequalities, but are also of independent interest. 

We note that by solving the equation $C(\epsilon)=0$, Theorem \ref{logsobolev} implies the following tight log-Sobolev inequality
\begin{equation} \label{tightlogsobolev}
\IntN f^2 \log \frac{f^2}{\int f^2 \diff\mu_k} \diff\mu_k \leq C \IntN |\nabla_k f|^2 \diff\mu_k,
\end{equation}
which holds for a constant $C>0$ and for all $f\in H^1_k(\RR^N)$. 

Using a standard argument (see for example \cite{BGL}), this inequality implies the following Poincar\'e inequality.

\begin{thm} \label{Dunklpoincare}
Let $R>0$ and consider the ball $B_R:= \{ x\in\RR^N : |x| \leq R\}$. There exists a constant $C>0$ independent of $R$ such that for any $f\in H^1_k(B_R, \mu_k)$ we have the inequality
$$ \int_{B_R} \left| f- \frac{1}{\mu_k(B_R)} \int_{B_R} f \diff\mu_k\right|^2 \diff\mu_k 
\leq C R^2 \int_{B_R} |\nabla_k f|^2 \diff\mu_k.$$
\end{thm}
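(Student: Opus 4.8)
The plan is to derive the Poincaré inequality on the ball $B_R$ from the tight log-Sobolev inequality \eqref{tightlogsobolev} by a linearization (small-perturbation) argument, keeping careful track of how the scaling in $R$ enters. First I would recall the standard principle that a tight log-Sobolev inequality always implies a Poincaré inequality with the same constant: applying \eqref{tightlogsobolev} to $f = 1 + \epsilon g$ (where $g$ has mean zero), expanding both sides to second order in $\epsilon$, and letting $\epsilon \to 0$, the leading-order terms cancel and the order-$\epsilon^2$ terms yield exactly a spectral-gap (Poincaré) inequality. Concretely, $\int f^2 \log(f^2 / \int f^2)$ expands as $2\epsilon^2 \operatorname{Var}(g) + o(\epsilon^2)$ while $\int |\nabla_k f|^2 = \epsilon^2 \int |\nabla_k g|^2$, so the log-Sobolev constant $C$ passes directly into the variance bound.

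The nontrivial point here is that \eqref{tightlogsobolev} is stated on all of $\RR^N$ for the measure $\mu_k$, whereas the target inequality lives on the bounded domain $B_R$ with the unnormalized measure $\mu_k$ restricted to $B_R$, and crucially must have the factor $R^2$ with a constant independent of $R$. My plan is to exploit the homogeneity of the Dunkl setting. The weight $w_k$ is homogeneous of degree $2\gamma$, and the Dunkl gradient $\nabla_k$ scales like an ordinary first-order operator: under the dilation $x \mapsto Rx$ one has $T_i[f(R\,\cdot)] = R\,(T_i f)(R\,\cdot)$. Thus I would reduce to the case $R=1$ by the change of variables $y = x/R$: writing $\tilde f(y) = f(Ry)$, the measure transforms by $\diff\mu_k(x) = R^{N+2\gamma}\diff\mu_k(y)$, and both sides of the Poincaré inequality pick up the same factor $R^{N+2\gamma}$, while the gradient term acquires an extra $R^{-2}$ relative to the variance term — which is precisely the source of the $R^2$ on the right-hand side. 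So it suffices to prove the inequality on $B_1$ with an $R$-free constant.

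For the fixed ball $B_1$, I would run the linearization argument above but applied to a \emph{local} tight log-Sobolev inequality on $B_1$ rather than the global one on $\RR^N$. Since \eqref{tightlogsobolev} holds for all $f \in H^1_k(\RR^N)$, restricting attention to $g$ supported near $B_1$ (extended suitably) and applying the standard $f = 1 + \epsilon g$ perturbation gives a variance bound on $\RR^N$; the remaining task is to transfer this to the restricted measure on $B_1$, which is where the geometry of the ball and the finiteness of $\mu_k(B_1)$ enter. The main obstacle I anticipate is precisely this transfer: the global inequality controls the variance against the global mean $\int g\,\diff\mu_k$, not against the conditional mean $\frac{1}{\mu_k(B_R)}\int_{B_R} g\,\diff\mu_k$, and one must either localize the log-Sobolev inequality to the ball or invoke a compactness/extension argument to produce a genuinely local spectral gap. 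I would handle this by a standard covering or extension argument showing that the restriction of $\mu_k$ to $B_1$ inherits a Poincaré inequality from the linearized global one, taking care that the constant depends only on the root system data $(N,\gamma)$ and not on $R$; verifying this $R$-independence rigorously is the delicate step, and the homogeneity reduction above is what makes it transparent.
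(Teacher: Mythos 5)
Your linearization step ($f=1+\epsilon g$ with $g$ of mean zero, Taylor expansion of the entropy, then $\epsilon\to 0$) is exactly the mechanism the paper uses, and your scaling reduction to $R=1$ via the homogeneity of $w_k$ and the dilation covariance of the $T_i$ is a legitimate alternative way to produce the factor $R^2$. The gap is in the base case, which you defer to ``a standard covering or extension argument'': that is precisely where the proof lives, and the specific route you sketch cannot work as stated. The tight inequality \eqref{tightlogsobolev} is with respect to $\mu_k$, which is an \emph{infinite} measure on $\RR^N$; the constant function $1$ is not in $L^2(\mu_k)$, so $f=1+\epsilon g$ is not an admissible test function, there is no global variance or spectral gap for $\mu_k$, and hence nothing to localize to $B_1$ by covering or extension. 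Supporting $g$ near $B_1$ does not help, since the obstruction comes from the constant $1$, and replacing $1$ by $\1_{B_1}$ destroys membership in $H^1_k(\RR^N)$.

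What the paper does instead is to keep the whole $\epsilon$-parametrized family of non-tight inequalities of Theorem \ref{logsobolev}, with the explicit constant $C(\epsilon)=\frac{N+2\gamma}{2}(\log\frac{1}{\epsilon}-c)$, and apply it on the ball with the \emph{normalized} finite measure $\tilde{\mu}_R=\mu_k(B_R)^{-1}\mu_k$. The normalization contributes an extra $\log \mu_k(B_R)$ to the additive constant, and since $\mu_k(B_R)=c'R^{N+2\gamma}$ by homogeneity, the choice $\delta=c''R^2$ solves $C(\delta)+\log\mu_k(B_R)=0$; this yields a tight log-Sobolev inequality on $B_R$ with constant $c''R^2$, which then linearizes to the Poincar\'e inequality. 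So both the $R^2$ and the tightness come from the quantitative, dimension-dependent form of $C(\epsilon)$ together with the finiteness of $\mu_k(B_R)$ --- ingredients absent from your sketch. To repair your argument you would need either this quantitative family of inequalities, or an independent proof of a Neumann--Poincar\'e inequality on $B_1$ for $\mu_k$ restricted to $B_1$, which is essentially the theorem itself.
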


\begin{proof}
To simplify the notation, let $\tilde{\mu}_R := \frac{1}{\mu_k(B_R)} \mu_k$ be the Dunkl probability measure on the ball $B_R$. Note that it is enough to prove the Theorem for $f$ that satisfies the additional assumption $\int f \diff\tilde{\mu}_R =0$, in which case the inequality takes the form
\begin{equation} \label{poincare0}
\int f^2 \diff\tilde{\mu}_R \leq C R^2 \int |\nabla_k f|^2 \diff\tilde{\mu}_R.
\end{equation}
To obtain the general case it is then enough to replace $f$ by $f-\int f \diff\tilde{\mu}_R$ in \eqref{poincare0}.

For any $\epsilon >0$ consider the function $g=1+\epsilon f$. A Taylor expansion shows that 
$$ g^2 \log \frac{g^2}{\int g^2 \diff\tilde{\mu}_R} = 2 \epsilon f + 3\epsilon^2 f^2 - \epsilon^2 \int f^2 \diff\tilde{\mu}_R + o(\epsilon^2),$$
and thus
\begin{equation} \label{poincaretaylor}
\int g^2 \log \frac{g^2}{\int g^2 \diff\tilde{\mu}_R} \diff\tilde{\mu}_R 
= 2\epsilon^2 \int f^2 \diff\tilde{\mu}_R + o(\epsilon^2),
\end{equation}
as $\epsilon \to 0$.

From Theorem \ref{logsobolev} we have that 
$$ \int g^2 \log \frac{g^2}{\int g^2 \diff\tilde{\mu}_R} \diff\tilde{\mu}_R \leq \delta \int |\nabla_k g|^2 \diff\tilde{\mu}_R + (C(\delta)+ \log (\mu_k(R)) \int g^2 \diff\tilde{\mu}_R,$$
holds for all $\delta>0$. However, using the fact that $\mu_k(B_R)= c' R^{N+2\gamma}$ for a constant $c'>0$, we find that $\delta = c'' R^2$, for a constant $c''>0$, solves the equation $C(\delta)+\log(\mu_k(B_R))=0$. Therefore, we have the tight log-Sobolev inequality
\begin{equation} \label{tightlogsobolevball}
 \int g^2 \log \frac{g^2}{\int g^2 \diff\tilde{\mu}_R} \diff\tilde{\mu}_R
 \leq c'' R^2 \int |\nabla_k g|^2 \diff\tilde{\mu}_R.
\end{equation}

Combining \eqref{poincaretaylor} and \eqref{tightlogsobolevball}, and letting $\epsilon \to 0$, we have obtained \eqref{poincare0}, as required. 
\end{proof}

\begin{remark}
This Poincar\'e inequality corresponds to the classical Neumann-Poincar\'e inequality. A Dirichlet-Poincar\'e inequality for Dunkl operators was also proved in \cite{Vel19}. Namely, we have the result:
$$ \int_\Omega |f|^2 \diff\mu_k \leq C(\Omega) \int_\Omega |\nabla_k f|^2 \diff\mu_k,$$
which holds on any bounded domain $\Omega \subset \RR^N$ for a constant $C(\Omega)>0$ and for all $f\in C_0^\infty (\RR^N)$.
\end{remark}

We can now use the previous result together with the $U$-bounds proved above to obtain a Poincar\'e inequality for the weighted measure $\mu_U$.

\begin{prop} \label{weighteddunklpoincare}
Let $p>1$ and consider the weighted probability measure $\diff\mu_U = \frac{1}{Z} e^{-U}\diff\mu_k$, where $U(x)=|x|^p$ and $Z=\IntN e^{-U} \diff\mu_k$. We then have the Poincar\'e inequality
\begin{equation} \label{weighteddunklpoincare_eqn} 
\IntN \left| f-\IntN f \diff\mu_U \right|^2 \diff\mu_U \leq C \IntN |\nabla_k f|^2 \diff\mu_U
\end{equation}
which holds for all functions $f \in H^1_k(\mu_U)$, with a constant $C>0$ independent of $f$.
\end{prop}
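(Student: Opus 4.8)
The plan is to combine the local Poincar\'e inequality of Theorem \ref{Dunklpoincare} with the $U$-bound of Proposition \ref{Ubounds-2p-prop} through a standard near-field/far-field decomposition, exploiting the fact that for $p>1$ the weight $|x|^{2(p-1)}$ grows at infinity. First I would reduce the statement to bounding $\IntN (f-a)^2 \diff\mu_U$ for a single conveniently chosen constant $a$. Indeed, since $\mu_U$ is a probability measure, the variational characterization of the variance gives
$$ \IntN \left| f-\IntN f \diff\mu_U \right|^2 \diff\mu_U = \min_{c\in\RR} \IntN (f-c)^2 \diff\mu_U \leq \IntN (f-a)^2 \diff\mu_U $$
for every $a\in\RR$. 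I would take $a=\frac{1}{\mu_k(B_R)}\int_{B_R} f \diff\mu_k$, the local $\mu_k$-average over a ball $B_R$ whose radius $R$ is fixed large only at the very end. Throughout I use that $\nabla_k$ annihilates constants, so $\nabla_k(f-a)=\nabla_k f$.

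Next I would split $\IntN (f-a)^2 \diff\mu_U = \int_{B_R} (f-a)^2 \diff\mu_U + \int_{B_R^c} (f-a)^2 \diff\mu_U$ and estimate each piece. On the far field $B_R^c$ I use $|x|^{2(p-1)} \geq R^{2(p-1)}$ together with Proposition \ref{Ubounds-2p-prop} applied to $f-a$:
$$ \int_{B_R^c} (f-a)^2 \diff\mu_U \leq R^{-2(p-1)} \IntN (f-a)^2 |x|^{2(p-1)} \diff\mu_U \leq \frac{C}{R^{2(p-1)}} \IntN |\nabla_k f|^2 \diff\mu_U + \frac{D}{R^{2(p-1)}} \IntN (f-a)^2 \diff\mu_U. $$
On the near field $B_R$ I use that $U=|x|^p$ is bounded on $B_R$, so $e^{-R^p}\leq e^{-U}\leq 1$ there and hence $\mu_U$ and $\mu_k$ are comparable on $B_R$ with $R$-dependent constants. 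Applying Theorem \ref{Dunklpoincare} (whose centering is exactly the $\mu_k$-average $a$) yields
$$ \int_{B_R} (f-a)^2 \diff\mu_U \leq \frac{1}{Z}\int_{B_R}(f-a)^2 \diff\mu_k \leq \frac{C R^2}{Z}\int_{B_R} |\nabla_k f|^2 \diff\mu_k \leq C R^2 e^{R^p} \IntN |\nabla_k f|^2 \diff\mu_U. $$

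Combining the two estimates and moving the residual term to the left gives
$$ \left(1 - \frac{D}{R^{2(p-1)}}\right) \IntN (f-a)^2 \diff\mu_U \leq C(R) \IntN |\nabla_k f|^2 \diff\mu_U, $$
where the quantity $\IntN (f-a)^2 \diff\mu_U$ is finite because $f\in H^1_k(\mu_U)\subset L^2(\mu_U)$ and $a$ is a constant, which legitimizes the absorption. The main (and essentially only) delicate point is this absorption step: it succeeds precisely because for $p>1$ one has $2(p-1)>0$, so the residual coefficient $D/R^{2(p-1)}$ coming from the $U$-bound tends to $0$ as $R\to\infty$, and one may fix $R$ large enough that $D/R^{2(p-1)}<1$. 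This is exactly where the hypothesis $p>1$ enters; at $p=1$ the coefficient is constant in $R$ and the absorption fails, consistent with the stated range. Fixing such an $R$ and recalling the variance reduction from the first paragraph yields \eqref{weighteddunklpoincare_eqn} with a constant $C>0$ depending on $p,N,\gamma$ but not on $f$.
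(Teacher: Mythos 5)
Your proposal is correct and follows essentially the same route as the paper: reduce to a single constant $a$ equal to the local $\mu_k$-average over $B_R$, control the near field via Theorem \ref{Dunklpoincare} and the comparability $e^{-R^p}\leq e^{-U}\leq 1$, control the far field via Proposition \ref{Ubounds-2p-prop} and $|x|^{2(p-1)}\geq R^{2(p-1)}$, and absorb the residual term by taking $R$ large. The only (harmless) differences are that you invoke the variational characterization of the variance where the paper uses a triangle-inequality argument with a factor of $4$, and that you absorb the residual $D R^{-2(p-1)}\IntN (f-a)^2\diff\mu_U$ over the whole space — which is in fact the cleaner way to apply the global $U$-bound — while explicitly noting the finiteness needed to justify the absorption.
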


\begin{proof}
We first note that for any constant $\zeta \in \RR$ we have
\begin{align*}
\left| \IntN f \diff\mu_U - \zeta \right| 
\leq  \IntN |f-\zeta| \diff\mu_U
\leq \left( \IntN |f-\zeta|^2 \diff\mu_U \right)^\frac{1}{2},
\end{align*}
so by the triangle inequality for the $L^2(\mu_U)$ norm we obtain
\begin{equation} \label{poincare_step}
\IntN \left| f-\IntN f \diff\mu_U \right|^2 \diff\mu_U \leq 4 \IntN |f-\zeta|^2 \diff\mu_U.
\end{equation}
Thus, it is enough to prove the inequality
\begin{equation} \label{weightedpoincarestep1} 
\IntN |f-\zeta|^2 \diff\mu_U \leq C \IntN |\nabla_k f|^2 \diff\mu_U
\end{equation}
for some $\zeta\in\RR$.

Let $R>0$ and let $B_R=\{ |x| \leq R\}$. We will prove (\ref{weightedpoincarestep1}) with $\zeta = \frac{1}{\mu_k(B_R)}\int_{B_R} f\diff\mu_k$ for large enough $R$. Firstly, we have
\begin{equation} \label{Dunklweightedpoincare1}
\begin{aligned}
\int_{B_R} |f-\zeta|^2 \diff\mu_U 
&\leq \frac{1}{Z} \int_{B_R} \left| f- \frac{1}{\mu_k(B_R)} \int_{B_R} f \diff\mu_k\right|^2 \diff\mu_k 
\\
&\leq \frac{C}{Z} R^2 \int_{B_R} |\nabla_k f|^2 \diff\mu_k
\\
&\leq C R^2 e^{R^p} \int_{B_R} |\nabla_k f|^2 \diff\mu_U.
\end{aligned}
\end{equation}
Here we used the Poincar\'e inequality of Theorem \ref{Dunklpoincare} and the bounds $e^{-R^p} \leq e^{-U} \leq 1$ on $B_R$. 

On the other hand, we can use the $U$-bounds of Proposition \ref{Ubounds-2p-prop} to the remaining integral as follows
\begin{align*}
\int_{\RR^N \setminus B_R} |f-\zeta|^2 \diff\mu_U 
&\leq R^{-2(p-1)} \int_{|x| \geq R} |f(x)-\zeta|^2 |x|^{2(p-1)} \diff\mu_U 
\\
&\leq C R^{-2(p-1)} \int_{|x|\geq R} |\nabla_k f|^2 \diff\mu_U + DR^{-2(p-1)} \int_{|x|\geq R} |f-\zeta|^2 \diff\mu_U.
\end{align*} 
But $R$ was an arbitrary positive number so we are free to choose it such that $DR^{-2(p-1)}<1$. Then we have
\begin{equation} \label{Dunklweightedpoincare2}
\int_{\RR^N \setminus B_R} |f-\zeta|^2 \diff\mu_U \leq \frac{CR^{-2(p-1)}}{1-DR^{-2(p-1)}} \int_{|x|\geq R} |\nabla_k f|^2 \diff\mu_U.
\end{equation}
Adding the inequalities (\ref{Dunklweightedpoincare1}) and (\ref{Dunklweightedpoincare2}), we obtain (\ref{weightedpoincarestep1}), and therefore, by the observation above, the Proposition is proved.
\end{proof}

\section{Tight log-Sobolev inequalities} \label{SEC:tightlogsob}

We now have all the ingredients to obtain tight log-Sobolev inequalities. The first is a tight version of the log-Sobolev inequality from Theorem \ref{logsob>2_thm}.

\begin{thm} \label{tightlogsobolev1}
Let $U(x)=|x|^p$ for $p\geq 2$ and consider the probability measure $\diff\mu_U = \frac{1}{Z} e^{-U} \diff\mu_k$, where $Z=\IntN e^{-U} \diff\mu_k$. Then there exists a constant $C>0$ such that the inequality
\begin{equation} \label{tightlogsob>2}
\IntN f^2 \log \frac{f^2}{\int f^2 \diff\mu_U} \diff\mu_U \leq C \IntN |\nabla_k f|^2 \diff\mu_U
\end{equation}
holds for all $f\in H^1_k(\mu_U)$.
\end{thm}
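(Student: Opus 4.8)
The plan is to upgrade the defective log-Sobolev inequality of Theorem \ref{logsob>2_thm} to a tight one by absorbing its residual $L^2$ term into the energy using the Poincar\'e inequality of Proposition \ref{weighteddunklpoincare}; this is the classical argument of Rothaus. Throughout, write $m = \IntN f \diff\mu_U$ and $g = f - m$.

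The first ingredient is Rothaus' lemma: for any probability measure $\mu$ and any $h \in L^2(\mu)$ with $\int h \diff\mu = 0$, and any constant $a$, one has
$$\IntN (h+a)^2 \log \frac{(h+a)^2}{\int (h+a)^2 \diff\mu} \diff\mu \leq \IntN h^2 \log \frac{h^2}{\int h^2 \diff\mu} \diff\mu + 2 \IntN h^2 \diff\mu.$$
This is a purely measure-theoretic statement (see \cite{BGL}) that makes no reference to any differential structure, so it applies verbatim with $\mu = \mu_U$, $h = g$ and $a = m$.

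The one Dunkl-specific observation needed is that the energy is invariant under the addition of constants: since each $T_i$ annihilates constants we have $\nabla_k g = \nabla_k f$, whence $\IntN |\nabla_k g|^2 \diff\mu_U = \IntN |\nabla_k f|^2 \diff\mu_U$. Moreover, as $\mu_U$ is a probability measure the constant $m$ lies in $L^2(\mu_U)$, so $g \in H^1_k(\mu_U)$ and Theorem \ref{logsob>2_thm} may be applied to $g$ to give
$$\IntN g^2 \log \frac{g^2}{\int g^2 \diff\mu_U} \diff\mu_U \leq C_1 \IntN |\nabla_k f|^2 \diff\mu_U + C_2 \IntN g^2 \diff\mu_U.$$
Combining this with Rothaus' lemma and noting that $\IntN g^2 \diff\mu_U = \IntN \left| f - \IntN f \diff\mu_U \right|^2 \diff\mu_U$ is the variance of $f$, I obtain
$$\IntN f^2 \log \frac{f^2}{\int f^2 \diff\mu_U} \diff\mu_U \leq C_1 \IntN |\nabla_k f|^2 \diff\mu_U + (C_2 + 2) \IntN \left| f - \IntN f \diff\mu_U \right|^2 \diff\mu_U.$$

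Finally, since $p \geq 2 > 1$, the Poincar\'e inequality \eqref{weighteddunklpoincare_eqn} bounds the last term by a constant multiple of $\IntN |\nabla_k f|^2 \diff\mu_U$, and relabelling constants yields \eqref{tightlogsob>2}. I do not expect a serious obstacle here: the entire analytic difficulty has already been packaged into Theorem \ref{logsob>2_thm} and Proposition \ref{weighteddunklpoincare}, and the only new input is Rothaus' lemma, which is standard and self-contained. The only points requiring a line of care are the invariance $\nabla_k(f - m) = \nabla_k f$ and the verification that $g \in H^1_k(\mu_U)$.
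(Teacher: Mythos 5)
Your proposal is correct and is essentially identical to the paper's proof: both apply Rothaus's lemma to reduce to the centred function $f-\int f\,\diff\mu_U$, invoke the non-tight log-Sobolev inequality of Theorem \ref{logsob>2_thm} for that function (using $\nabla_k(f-m)=\nabla_k f$), and absorb the variance term via the Poincar\'e inequality of Proposition \ref{weighteddunklpoincare}. No gaps; the extra care you note about $g\in H^1_k(\mu_U)$ is fine and the paper takes it for granted.
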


In order to prove this result we will need the following inequality, known as Rothaus's lemma.

\begin{lem} [\cite{Rothaus}]
Recall that 
$$ \mathrm{Ent}(g) := \IntN g \log g \diff\mu_U - \IntN g \diff\mu_U \log \IntN g \diff\mu_U,$$
for $g\geq 0$. Then, for all $f$ measurable with $\IntN f \diff\mu_U = 0$, we have the inequality
$$ \mathrm{Ent} ((f+c)^2) \leq \mathrm{Ent}(f^2) + 2\IntN f^2 \diff\mu_U,$$
for all $c \in \RR$.
\end{lem}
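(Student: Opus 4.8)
The plan is to fix $f$ with $\IntN f\diff\mu_U=0$ and to reduce everything to a one-variable analysis. First I would use the homogeneity $\mathrm{Ent}(\lambda g)=\lambda\,\mathrm{Ent}(g)$ for $\lambda>0$: the substitution $(f,c)\mapsto(\lambda f,\lambda c)$ preserves the constraint $\IntN f\diff\mu_U=0$ and multiplies each of the three terms in the claimed inequality by $\lambda^2$, so after dismissing the trivial case $f\equiv 0$ I may normalise $\IntN f^2\diff\mu_U=1$. Setting $H(c):=\mathrm{Ent}((f+c)^2)$, the constraint gives $\IntN(f+c)^2\diff\mu_U=1+c^2$, and the lemma becomes the assertion that $\sup_{c\in\RR}H(c)\le \mathrm{Ent}(f^2)+2$.

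Next I would record the behaviour of $H$ at the two ends together with its first derivative. Writing $f+c=c(1+f/c)$ and using homogeneity, $H(c)=c^2\,\mathrm{Ent}\big((1+f/c)^2\big)$; the Taylor expansion of $\mathrm{Ent}((1+\epsilon f)^2)$ for $\IntN f\diff\mu_U=0$ — identical to the one used in \eqref{poincaretaylor} — gives $\mathrm{Ent}((1+\epsilon f)^2)=2\epsilon^2\IntN f^2\diff\mu_U+o(\epsilon^2)$, whence $H(c)\to 2\IntN f^2\diff\mu_U=2$ as $|c|\to\infty$. Since $\mathrm{Ent}(f^2)\ge 0$ by Jensen's inequality, the bound $H(c)\le\mathrm{Ent}(f^2)+2$ holds automatically in the limit $|c|\to\infty$, so the whole difficulty is confined to the interior. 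Differentiating and using the cancellation $\IntN(f+c)\diff\mu_U=c$, the linear contributions collapse and one is left with
\begin{equation*}
H'(c)=2\IntN (f+c)\log(f+c)^2\diff\mu_U-2c\log(1+c^2).
\end{equation*}

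The heart of the matter, and what I expect to be the main obstacle, is to control $H$ at an interior maximiser $c^\ast$. There $H'(c^\ast)=0$, which, writing $g:=f+c^\ast$ with $\IntN g\diff\mu_U=c^\ast$ and $\IntN g^2\diff\mu_U=1+(c^\ast)^2$, reads $\IntN g\log g^2\diff\mu_U=c^\ast\log(1+(c^\ast)^2)$. I would substitute this stationarity relation into $H(c^\ast)=\IntN g^2\log g^2\diff\mu_U-(1+(c^\ast)^2)\log(1+(c^\ast)^2)$ and estimate the result against $\mathrm{Ent}(f^2)=\IntN f^2\log f^2\diff\mu_U$ (recall $f=g-c^\ast$), using the convexity of $u\mapsto u\log u$ together with Jensen's inequality to absorb the difference into the additive constant $2$. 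Some care is needed with integrability near the zero set $\{f=-c\}$ of $(f+c)^2$, which I would handle by first proving the inequality for bounded $f$ and then passing to the general case by a standard truncation and density argument.

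Since the statement is classical and due to Rothaus, an alternative is simply to invoke \cite{Rothaus}; the scheme above is the route I would take to reproduce it, the delicate point being the stationary-point estimate of the last paragraph rather than the elementary reductions that precede it.
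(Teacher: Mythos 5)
The paper gives no proof of this lemma at all: it is imported verbatim from \cite{Rothaus}, so your fallback option of ``simply invoke \cite{Rothaus}'' is exactly what the author does. Judged as a standalone proof, however, your sketch has a genuine gap, and it sits precisely at the step you yourself call the heart of the matter. The elementary reductions are all correct: homogeneity of $\mathrm{Ent}$ justifies the normalisation $\IntN f^2\diff\mu_U=1$; the Taylor expansion gives $H(c)\to 2$ as $|c|\to\infty$; your formula for $H'(c)$ checks out; and the logic that a putative counterexample would force a finite maximiser $c^\ast$ with $H'(c^\ast)=0$ is sound, since $\sup H>\mathrm{Ent}(f^2)+2\geq 2$ would exceed the limiting value at infinity. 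But at that point the proof stops. ``Substitute the stationarity relation and estimate the result against $\mathrm{Ent}(f^2)$ using the convexity of $u\mapsto u\log u$ together with Jensen's inequality'' is a placeholder, not an argument: you never say what function of what variable is convex, nor to which measure Jensen is applied, and bounding $H(c^\ast)$ by $\mathrm{Ent}(f^2)+2$ given only $\IntN g\log g^2\diff\mu_U=c^\ast\log(1+(c^\ast)^2)$ is essentially the entire content of Rothaus's lemma.

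To see that this step cannot be waved through, note that the most natural implementation of your suggestion fails. Take $c=1$ by homogeneity and set $b=\IntN f^2\diff\mu_U$. If $k(x):=(1+x)^2\log(1+x)^2-x^2\log x^2-2x^2$ were concave (up to an irrelevant linear term), Jensen's inequality at the point $\IntN f\diff\mu_U=0$ would give $\IntN k(f)\diff\mu_U\leq k(0)=0$, and combining this with the elementary bound $(1+b)\log(1+b)\geq b\log b$ yields exactly the lemma. But
$$ k''(x)=4\left(\log|1+x|-\log|x|-1\right)\longrightarrow +\infty \qquad \text{as } x\to 0,$$
because of the subtracted singular term $x^2\log x^2$: the function $k$ is strictly convex near $\{f=0\}$, so Jensen points the wrong way in precisely the region where $f^2\log f^2$ is badly behaved. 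Any correct proof must contain a device for this singular region --- this is what the arguments in \cite{Rothaus} and in \cite{BGL} actually do --- and your sketch has none. Two smaller loose ends in the same direction: the truncation $f_n=\max\{\min\{f,n\},-n\}$ destroys the constraint $\IntN f\diff\mu_U=0$, so the reduction to bounded $f$ needs a re-centering step; and differentiation of $H$ under the integral sign should be justified (e.g.\ by dominated convergence using $|u\log u^2|\leq C(1+u^2)$). Given that the lemma is classical, citing \cite{Rothaus}, as the paper does, is the right call; the sketch as written does not replace that citation.
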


\begin{proof} [Proof of Theorem \ref{tightlogsobolev1}]
By Rothaus's lemma we have
$$ \mathrm{Ent}(f^2) \leq \mathrm{Ent}\left(\left(f-\int f \diff\mu_U\right)^2\right) + 2 \IntN \left(f-\int f \diff\mu_U\right)^2 \diff\mu_U.$$
Furthermore, from Theorem \ref{logsob>2_thm}, we have
$$ \mathrm{Ent}(f^2) \leq C_1 \IntN |\nabla_k f|^2 \diff\mu_U + (2+C_2) \IntN \left(f-\int f \diff\mu_U\right)^2 \diff\mu_U.$$
Finally, using the Poincar\'e inequality of Proposition  \ref{weighteddunklpoincare}, we obtain
$$ \mathrm{Ent}(f^2) \leq (C_1+C(2+C_2)) \IntN |\nabla_k f|^2 \diff\mu_U,$$
which is exactly what we wanted to prove.
\end{proof}

As we shall see in the next section, the condition $p\geq 2$ in the previous Theorem is necessary. However, in the range $1< p <2$ we can still obtain a $\Phi$-Sobolev inequality. This is the object of the following theorem, which is a tight version of the generalised log-Sobolev inequality of Theorem \ref{weightedlogsobolev1}, and it is obtained from this result in a manner very similar to the proof that we have just seen. 

\begin{thm} \label{tightlogsobolev2}
Let $1< p < 2$ and $s=2\frac{p-1}{p}$. Let $U(x)=|x|^p$ and consider the probability measure $\diff\mu_U = \frac{1}{Z} e^{-U} \diff\mu_k$, where $Z=\IntN e^{-U} \diff\mu_k$. Let also 
$$ \Phi(x) = x (\log(x+1))^s.$$
Then there exists a constant $C>0$ such that the inequality
$$ \IntN \Phi(f^2) \diff\mu_U - \Phi \left( \IntN f^2 \diff\mu_U \right)
\leq C \IntN |\nabla_k f|^2 \diff\mu_U,$$
holds for all $f\in H^1_k(\mu_U)$.
\end{thm}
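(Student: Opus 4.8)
The plan is to mirror the proof of Theorem \ref{tightlogsobolev1} as closely as possible, replacing the role of the ordinary entropy with the $\Phi$-entropy functional attached to $\Phi(x)=x(\log(x+1))^s$. The strategy has three ingredients: the non-tight $\Phi$-Sobolev inequality from Theorem \ref{weightedlogsobolev1}, the Poincar\'e inequality from Proposition \ref{weighteddunklpoincare}, and a Rothaus-type perturbation estimate appropriate to $\Phi$. First I would introduce the centred function $\tilde f = f - \int f \diff\mu_U$, so that $\int \tilde f \diff\mu_U = 0$ and $\int \tilde f^2 \diff\mu_U = \int f^2 \diff\mu_U - (\int f \diff\mu_U)^2$. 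The goal is to bound the left-hand side $\IntN \Phi(f^2)\diff\mu_U - \Phi(\IntN f^2 \diff\mu_U)$ in terms of the corresponding quantity for $\tilde f$, up to a controllable error proportional to $\int \tilde f^2 \diff\mu_U$, and then absorb that error using Poincar\'e.

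The key step, and the one I expect to be the main obstacle, is establishing the analogue of Rothaus's lemma for the functional $\Phi$. Define $\mathrm{Ent}_\Phi(g) := \IntN \Phi(g)\diff\mu_U - \Phi(\IntN g \diff\mu_U)$ for $g \geq 0$; the target inequality reads $\mathrm{Ent}_\Phi(f^2) \leq C \IntN |\nabla_k f|^2 \diff\mu_U$. What I need is an estimate of the form
$$
\mathrm{Ent}_\Phi(f^2) \leq \mathrm{Ent}_\Phi(\tilde f^2) + C' \IntN \tilde f^2 \diff\mu_U,
$$
valid for all $f$, where $C'$ depends only on $\Phi$ (equivalently on $p$). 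Unlike the classical case $\Phi(x)=x\log x$, where Rothaus's lemma is available off the shelf, here I must verify that $\Phi$ belongs to the class of functions for which such a perturbation bound holds; the relevant structural fact is that $\Phi$ is convex, $\Phi(0)=0$, and $\Phi$ grows slower than $x^2$ (indeed $\Phi(x)/x \to \infty$ but only logarithmically), so that the $\Phi$-entropy is subadditive under the decomposition $f = \tilde f + c$ with $c = \int f \diff\mu_U$ up to a linear-in-$\int \tilde f^2$ correction. I would check that $x \mapsto \Phi(x)$ and $x \mapsto \Phi(x)/x$ have the sign and monotonicity properties needed to run the Rothaus argument, treating the constant shift $c$ via a Taylor-type expansion of $\Phi((\tilde f + c)^2)$ around $\tilde f^2$ and controlling the remainder by the logarithmic growth of $\Phi'$.

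Once the Rothaus-type lemma is in hand, the rest is a direct chain exactly as in Theorem \ref{tightlogsobolev1}. Applying Theorem \ref{weightedlogsobolev1} to the centred function $\tilde f$ (noting that $\log \frac{g}{\int g}$ and $\log(g+1)$ are comparable after the usual splitting into the regions where $f^2$ is large or small, which is already implicit in the passage from $|\log|^s$ to the $\Phi$ formulation) gives
$$
\mathrm{Ent}_\Phi(\tilde f^2) \leq C_1 \IntN |\nabla_k \tilde f|^2 \diff\mu_U + C_2 \IntN \tilde f^2 \diff\mu_U.
$$
Since $\nabla_k \tilde f = \nabla_k f$ (the subtracted term is constant, hence annihilated by every $T_i$), combining this with the Rothaus-type bound yields
$$
\mathrm{Ent}_\Phi(f^2) \leq C_1 \IntN |\nabla_k f|^2 \diff\mu_U + (C_2 + C') \IntN \tilde f^2 \diff\mu_U,
$$
and then the Poincar\'e inequality of Proposition \ref{weighteddunklpoincare} bounds $\IntN \tilde f^2 \diff\mu_U$ by $C \IntN |\nabla_k f|^2 \diff\mu_U$. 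Relabelling constants finishes the proof. The only genuinely new work beyond Theorem \ref{tightlogsobolev1} is the justification of the $\Phi$-entropy perturbation estimate and the verification that the non-tight inequality of Theorem \ref{weightedlogsobolev1} can be recast in the $\Phi$-functional language; both reduce to elementary but slightly delicate convexity and growth estimates on $\Phi$.
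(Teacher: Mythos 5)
Your proposal follows essentially the same route as the paper: the non-tight $\Phi$-Sobolev inequality of Theorem \ref{weightedlogsobolev1}, a Rothaus-type perturbation lemma for $\mathrm{Ent}_\Phi$, and the Poincar\'e inequality of Proposition \ref{weighteddunklpoincare}, chained exactly as in the proof of Theorem \ref{tightlogsobolev1}. The two steps you defer to ``elementary but delicate'' estimates are precisely the ones the paper addresses explicitly: the $\Phi$-entropy analogue of Rothaus's lemma is quoted from the literature (and comes with a constant $A_1$ in front of $\mathrm{Ent}_\Phi(\tilde f^2)$ rather than the constant $1$ you assume, which is harmless here), and the passage from the $|\log|^s$ formulation of Theorem \ref{weightedlogsobolev1} to the $\Phi$-functional is carried out via the pointwise bound $\mathrm{Ent}_\Phi(g)\leq \IntN g\left|\log \frac{g}{\int g\,\diff\mu_U}\right|^s\diff\mu_U+\IntN g\,\diff\mu_U$, proved by splitting into the regions where $g$ lies above or below its mean.
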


As before, we need the following generalisation of Rothaus's lemma.

\begin{lem} [\cite{LZ}]
Let $\Phi$ be as in the statement of the Theorem and define, for $g\geq 0$, 
$$ \mathrm{Ent}_\Phi (g) := \IntN \Phi(g) \diff\mu_U - \Phi \left( \IntN g \diff\mu_U \right).$$
Then there exist constants $A_1, B_1>0$ such that for any $f$ with $\IntN f \diff\mu_U =0 $ we have
$$ \mathrm{Ent}_\Phi ((f+c)^2) \leq A_1 \mathrm{Ent}_\Phi(f^2) + B_1 \IntN f^2 \diff\mu_U$$
for all $c\in \RR$.
\end{lem}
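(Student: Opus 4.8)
The plan is to prove this $\Phi$-entropy analogue of Rothaus's lemma directly, after rewriting $\mathrm{Ent}_\Phi$ in Bregman-divergence form. For convex $\Phi$ and $g\ge 0$ with mean $\bar g = \IntN g\diff\mu_U$, the extra linear term integrates to zero, so
\[
\mathrm{Ent}_\Phi(g) = \IntN \left[ \Phi(g) - \Phi(\bar g) - \Phi'(\bar g)(g-\bar g) \right] \diff\mu_U =: \IntN D_\Phi(g,\bar g)\diff\mu_U,
\]
with $D_\Phi\ge 0$. Writing $m=\IntN f^2\diff\mu_U$ and using $\IntN f\diff\mu_U=0$, we have $\IntN (f+c)^2\diff\mu_U = m+c^2$, so the left-hand side of the lemma is $\IntN D_\Phi((f+c)^2,m+c^2)\diff\mu_U$ and the entropy on the right is $\IntN D_\Phi(f^2,m)\diff\mu_U$. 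After replacing $f$ by $-f$ I may assume $c\ge 0$.

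I would next record the analytic properties of $\Phi(x)=x(\log(x+1))^s$ that drive everything, using $s=2(p-1)/p\in(0,1)$: that $\Phi$ is increasing and convex, that it is doubling (there is $C_\lambda$ with $\Phi(\lambda x)\le C_\lambda\Phi(x)$), that $x\Phi'(x)$ is comparable to $\Phi(x)$, and --- the decisive fact --- that $M:=\sup_{x>0}x\Phi''(x)<\infty$ with $x\Phi''(x)\to 0$ as $x\to 0$ and as $x\to\infty$ (for the classical choice $\Phi(x)=x\log x$ one has instead $x\Phi''(x)\equiv 1$). This is exactly where $s<1$ enters, and it is what forces the quadratic remainders below to be controlled by $\IntN f^2\diff\mu_U=m$.

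The core estimate is a pointwise comparison of the two divergences, organised by the size of $f$ relative to $c$ and $\sqrt m$. Where $f$ dominates (say $c\le 2|f|$) one has $(f+c)^2\le 9f^2$, and the doubling property lets me bound $D_\Phi((f+c)^2,m+c^2)$ by a constant multiple of $D_\Phi(f^2,m)$ up to lower-order terms. Where $f$ is small or comparable, I expand $D_\Phi((f+c)^2,m+c^2)$ to second order: the quadratic remainder is $\tfrac12\Phi''(\eta)\big((f+c)^2-m-c^2\big)^2$ for an intermediate $\eta$, and the bound $\eta\Phi''(\eta)\le M$ together with $\eta\asymp c^2+m$ on this regime turns it into at most a constant times $f^2$, so its integral is absorbed into $B_1 m$; the surviving linear-in-$f$ contribution integrates to zero over all of $\RR^N$ by the mean-zero hypothesis and is recovered on the complementary region from the $D_\Phi(f^2,m)$ term.

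I expect the genuine obstacle to be the uniform bookkeeping of the $\Phi$-of-mean terms, which is where the non-homogeneity of $\Phi$ bites. Because $\Phi(m)=m(\log(m+1))^s$ is not bounded by any multiple of $m$, a naive splitting that replaces $\int_{\{c\le 2|f|\}}\Phi((f+c)^2)\diff\mu_U$ by $\IntN\Phi(f^2)\diff\mu_U$ produces a stray $\Phi(m)$ that the right-hand side cannot absorb; the extremal profile $f=\pm\sqrt m$ (for which $\mathrm{Ent}_\Phi(f^2)=0$ yet $\Phi(m)$ is large) shows that the bound can only succeed through a near-cancellation between $\IntN\Phi((f+c)^2)\diff\mu_U$ and $\Phi(m+c^2)$. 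Making this cancellation quantitative --- showing that every residual term at the scale $\Phi(m)$ carries a saving factor of order $(\log)^{s-1}\to 0$, equivalently controlling differences of $\Phi'$ and $\Phi$ at comparable arguments through $x\Phi''(x)\to 0$ --- is the technical heart of the argument and the reason the constants $A_1,B_1$ remain non-explicit.
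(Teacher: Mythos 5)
First, a point of context: the paper does not prove this lemma at all --- it is imported wholesale from the reference [LZ] --- so there is no internal proof to compare routes with, and your attempt has to stand entirely on its own. Your scaffolding is sound: the Bregman rewriting $\mathrm{Ent}_\Phi(g)=\int D_\Phi(g,\bar g)\diff\mu_U$ is correct (the linear term integrates away precisely because $\mu_U$ is a probability measure and $\int f\diff\mu_U=0$), the analytic properties you list for $\Phi(x)=x(\log(1+x))^s$ are all true, and a two-regime pointwise comparison of $D_\Phi\bigl((f+c)^2,m+c^2\bigr)$ against $D_\Phi(f^2,m)$ can in fact be pushed through. But as written, both core estimates are asserted rather than proved, and one of them is false as stated.

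Concretely: in the regime $|f|\le c/2$ you claim the intermediate point satisfies $\eta\asymp c^2+m$, hence the Taylor remainder is at most a constant times $f^2$. When $c^2\ll m$ this is wrong on both counts. The mean value theorem only places $\eta$ somewhere between $(f+c)^2\sim c^2$ and $m+c^2\sim m$, so in the worst case $\Phi''(\eta)\lesssim M/c^2$ and your bound degenerates to $Mm^2/c^2\gg m$; and the asserted conclusion cannot hold pointwise, since at a point where $f(x)=0$ one has $D_\Phi\bigl(c^2,m+c^2\bigr)>0$ by strict convexity of $\Phi$, which no multiple of $f^2=0$ dominates. The repair is to abandon the crude MVT form and use the integral remainder $D_\Phi(u,v)=\int_v^u(u-t)\Phi''(t)\diff t$ with $\Phi''(t)\le M/t$, which gives e.g.\ $D_\Phi(u,v)\le Mv$ for $u\le v$ and, on this regime, $D_\Phi\bigl((f+c)^2,m+c^2\bigr)\le CM(f^2+m)$; the additive $CMm$ is harmless because $\mu_U$ is a probability measure, so its integral is absorbed into $B_1\int f^2\diff\mu_U$. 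Similarly, in the regime $c\le 2|f|$, ``doubling plus lower-order terms'' is not an argument: both the argument and the base point of $D_\Phi$ move simultaneously, and doubling of $\Phi$ alone says nothing about that. One genuinely needs something like the three-point identity $D_\Phi(u',v')=D_\Phi(u',v)+D_\Phi(v,v')+\bigl(\Phi'(v)-\Phi'(v')\bigr)(u'-v)$ to freeze the base point at $v=m$, followed by an estimate such as $u\bigl[\Phi'(u)-\Phi'(m)\bigr]\le 2D_\Phi(u,m)+Mu$ (split $\int_m^u$ at $u/2$) to control $D_\Phi(u',m)$ for $u'\le 9u$ by $A\,D_\Phi(u,m)+B(u+m)$. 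Finally, the sentence about the ``surviving linear-in-$f$ contribution'' integrating to zero globally and being ``recovered on the complementary region'' is not a mathematical step: in the pointwise Bregman comparison no linear term survives at all, and if you expand $\Phi\bigl((f+c)^2\bigr)$ only over part of $\RR^N$, then $\int_{\text{region}}f\diff\mu_U$ need not vanish and must be tracked explicitly. Your closing paragraph concedes that this quantitative cancellation --- the actual content of the lemma --- is left undone; so what you have is a correct plan with correct ingredients, but not yet a proof.
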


\begin{proof} [Proof of Theorem \ref{tightlogsobolev2}]
The proof of this goes along the same lines as the proof of Theorem \ref{tightlogsobolev1} although in this case we cannot apply Theorem \ref{weightedlogsobolev1} directly. Instead, we note that
\begin{align*}
\mathrm{Ent}_\Phi(g) 
&= \IntN g \left[\left(\log (1+g) \right)^s - \left( \log \left(1+\IntN g \right)\right)^s \right] \diff\mu_U
\\
&\leq \IntN g \left| \log \frac{g+1}{\int g \diff\mu_U +1}\right|^s \diff\mu_U,
\end{align*}
where we used the inequality $(X+Y)^s \leq X^s + Y^s$ which holds for all $X,Y \geq 0$ since $s\in [0,1]$. We compute the integral on the right hand side separately over $X=\left\{x : g(x) \geq \IntN g \diff\mu_U \right\}$ and $\overline{X}=\RR^N\setminus X$. On $X$ we have 
$$ 1 \leq \frac{g+1}{\int g \diff\mu_U +1} \leq \frac{g}{\int g \diff\mu_U},$$
so
$$ \int_X g \left| \log \frac{g+1}{\int g \diff\mu_U +1}\right|^s \diff\mu_U
\leq \IntN g \left| \log \frac{g}{\int g \diff\mu_U}\right|^s \diff\mu_U.$$
On the other hand, on $\overline{X}$ we have
$$ 1 \leq \frac{\int g \diff\mu_U +1}{g+1} \leq 1 + \frac{\int g \diff\mu_U}{g},$$
so 
\begin{align*}
\int_{\overline{X}} g \left| \log \frac{g+1}{\int g \diff\mu_U +1}\right|^s \diff\mu_U 
&= \int_{\overline{X}} g \left( \log \frac{\int g \diff\mu_U+1}{g +1}\right)^s \diff\mu_U
\\
&\leq \int_{\overline{X}} g \left(\frac{\int g \diff\mu_U}{g}\right)^s \diff\mu_U
\leq \IntN g \diff\mu_U,
\end{align*}
where we first used the inequality $\log(1+x) \leq x$ for all $x\geq 0$, and then the fact that $s\leq 1$, so $\left(\frac{\int g \diff\mu_U}{g}\right)^s \leq \frac{\int g \diff\mu_U}{g}$. Thus
$$ \mathrm{Ent}_\Phi(g) \leq \IntN g \left| \log \frac{g}{\int g \diff\mu_U}\right|^s \diff\mu_U + \IntN g \diff\mu_U.$$

We can now apply the same strategy as before. First, by Theorem \ref{weightedlogsobolev1}, we have
$$ \mathrm{Ent}_\Phi(g^2) \leq C_1 \IntN |\nabla_k g|^2 \diff\mu_U + (C_2+1) \IntN g^2 \diff\mu_U.$$
Taking $g=f-\IntN f \diff\mu_U$ in this inequality and applying the previous Lemma, we have
$$ \mathrm{Ent}_\Phi(f^2) \leq A_1 C_1 \IntN |\nabla_k f|^2 \diff\mu_U + (A_1(C_2+1) + B_1) \IntN \left(f -\IntN f \diff\mu_U \right)^2 \diff\mu_U.$$
Finally, using the Poincar\'e inequality of Proposition \ref{weighteddunklpoincare}, the proof is complete.
\end{proof}

\section{Applications}
\label{SEC:appl}

\subsection{Exponential integrability and measure concentration}

As a consequence of the tight log-Sobolev inequality of Theorem \ref{tightlogsobolev1}, we can prove exponential integrability for Lipschitz functions. The proof of this fact uses the classical Herbst argument (see \cite{BGL}); for completeness, we give a sketch of the argument here.

\begin{thm} \label{expint_thm}
Let $U(x)=|x|^p$ for $p\geq 2$ and consider the probability measure $\diff\mu_U = \frac{1}{Z} e^{-U}\diff \mu_k$. For any $a$-Lipschitz function $f$ and for any $b<\sqrt{\frac{2}{aC}}$ (where $C$ is the constant in \eqref{tightlogsob>2}) we have
$$ \IntN e^{b^2f^2/2} \diff\mu_U <\infty.$$
\end{thm}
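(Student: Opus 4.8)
The plan is to carry out the classical Herbst argument. The goal is to show that the Laplace transform $F(b) := \IntN e^{b^2 f^2/2} \diff\mu_U$ is finite for $b$ small enough, and this is done by applying the tight log-Sobolev inequality \eqref{tightlogsob>2} to the well-chosen test function $g = e^{b^2 f^2/4}$, i.e. $g^2 = e^{b^2 f^2/2}$. The point is that plugging such an exponential into the entropy functional converts the log-Sobolev inequality into a differential inequality for $F(b)$, which can then be integrated.

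\emph{First} I would reduce to the case of a bounded, smooth, $a$-Lipschitz function by a standard approximation/truncation argument (replacing $f$ by $\min(f^+,n) - \min(f^-,n)$ or the like), so that all the integrals below are manifestly finite and differentiation under the integral sign is justified; the final bound, being uniform in the truncation level, passes to the limit by monotone convergence. \emph{Next}, with $g = e^{b^2 f^2/4}$ one computes the two sides of \eqref{tightlogsob>2}. On the entropy side, writing $\Lambda(b) := \IntN e^{b^2 f^2/2}\diff\mu_U = F(b)$, one recognises
$$
\IntN g^2 \log \frac{g^2}{\int g^2 \diff\mu_U} \diff\mu_U
= \frac{b^2}{2} \IntN f^2 e^{b^2 f^2/2} \diff\mu_U - F(b)\log F(b).$$
On the gradient side, since $f$ is $a$-Lipschitz one has $|\nabla_k g|^2 = \tfrac{b^4}{4} f^2 |\nabla_k f|^2 \, e^{b^2 f^2/2} \le \tfrac{a^2 b^4}{4} f^2 e^{b^2 f^2/2}$ pointwise (using that an $a$-Lipschitz function has $|\nabla_k f| \le a$). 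Substituting these into \eqref{tightlogsob>2} and noting that $\IntN f^2 e^{b^2 f^2/2}\diff\mu_U$ appears on both sides with the favourable sign, the two such terms can be combined; for $b$ small enough that the resulting coefficient is nonnegative, that term may be dropped, leaving a clean differential inequality relating $F(b)\log F(b)$ and $F(b)$.

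\emph{Then} I would recast this as an inequality for $H(b) := \tfrac{1}{b}\log F(b)$ (the usual Herbst substitution). A short computation shows that the differential inequality obtained above is equivalent to $H'(b) \le \tfrac{C a^2}{2}$ for $b$ in the relevant range, together with the initial condition $H(b) \to 0$ as $b\to 0^+$ (which follows from $F(0)=1$ and $F'(0)=0$ after a Taylor expansion, using $\IntN f\,\diff\mu_U$ as the natural centering — or one simply works with $f$ centred so that $\IntN f \diff\mu_U = 0$). Integrating gives $\log F(b) \le \tfrac{C a^2}{2} b^2 + (\text{const})\, b$, hence $F(b) < \infty$ for every $b$ in the stated range $b < \sqrt{2/(aC)}$, which is precisely the threshold at which the coefficient of the dropped term changes sign.

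\emph{The main obstacle} I anticipate is not the algebra of the Herbst argument but the two analytic legitimacy issues peculiar to the Dunkl setting. First, one must justify that $g = e^{b^2 f^2/4}$ is an admissible test function in $H^1_k(\mu_U)$ and that the pointwise chain-rule bound $|\nabla_k g| \le a\,\tfrac{b^2}{2}|f|\,g$ holds; because the Dunkl gradient contains nonlocal difference terms, the ordinary chain rule for $\nabla_k$ is \emph{not} available, so I would instead bound $|\nabla_k g|$ directly from the definition of $g$ together with the elementary estimate $|e^{s}-e^{t}| \le e^{\max(s,t)}|s-t|$ applied to the difference quotients, controlling the reflection terms by the Lipschitz constant of $f$. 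Second, the differentiation of $F$ under the integral sign and the interchange of limits require the uniform integrability supplied by the truncation step. Once these two points are secured, the rest is the routine integration of the differential inequality described above.
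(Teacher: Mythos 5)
There is a genuine gap, and it is exactly at the point you flag as the ``main obstacle'': the treatment of $\nabla_k$ applied to $g=e^{b^2f^2/4}$. Your displayed identity $|\nabla_k g|^2=\tfrac{b^4}{4}f^2|\nabla_k f|^2e^{b^2f^2/2}$ is false for the Dunkl gradient (there is no chain rule), and the auxiliary claim that an $a$-Lipschitz $f$ satisfies $|\nabla_k f|\le a$ is also false, since $T_if$ contains the difference quotients $k_\alpha\alpha_i\frac{f(x)-f(\sigma_\alpha x)}{\alx}$. Your proposed repair via $|e^s-e^t|\le e^{\max(s,t)}|s-t|$ does not deliver the pointwise bound $|\nabla_k g|\le \tfrac{ab^2}{2}|f|\,g$ either: the reflection term is controlled by $e^{b^2\max(f(x)^2,f(\sigma_\alpha x)^2)/4}$, i.e.\ by the value of $g$ at the \emph{reflected} point, which is not dominated by $g(x)$. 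One might try to salvage this at the integrated level using the $G$-invariance of $\mu_U$, but that is additional work you have not done, and as written the key estimate feeding the differential inequality is unjustified.

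The paper sidesteps all of this with a reduction you are missing: it first proves the statement for $G$-invariant $f$, for which the difference terms in $T_i$ vanish identically, so $\nabla_k=\nabla$ and the classical chain rule and the bound $|\nabla f|\le a$ are legitimate; the general case is then obtained by restricting $f$ to a Weyl chamber $H$, extending it $G$-invariantly (the extension is still $a$-Lipschitz), and summing $\IntN e^{b^2f^2/2}\1_H\,\diff\mu_U$ over the finitely many chambers using the identity $\IntN \tilde f_H\,\diff\mu_U=|G|\int_H f\,\diff\mu_U$. Separately from this main issue, note that the paper runs Herbst differently from you: it derives the moment-generating-function bound $\IntN e^{sf}\diff\mu_U\le\exp(s\int f\diff\mu_U+aCs^2/4)$ from the inequality applied to $e^{sf/2}$, and then obtains exponential square-integrability by multiplying by $e^{-s^2/(2b^2)}$ and integrating in $s$ via Fubini; your variant (a differential inequality in $b$ for $F(b)=\IntN e^{b^2f^2/2}\diff\mu_U$ directly) is a viable alternative in the classical setting and produces the same threshold, but it does not remove the need for the $G$-invariance reduction.
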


\begin{proof}
We first prove that if $f$ is $G$-invariant (in addition to the assumptions above), then for any $s\in \RR$ we have
\begin{equation} \label{expint_step}
\IntN e^{sf} \diff\mu_U \leq \exp\left(s \IntN f \diff\mu_U + aC\frac{s^2}{4} \right).
\end{equation}
It is enough to prove this inequality for a bounded function $f$. Indeed, the general case can then be obtained by defining $f_n(x) = \max\{\min\{f(x),n\}, -n\}$ for all $n\in \NN$, and taking the limit $n\to \infty$ in \eqref{expint_step} using Fatou's lemma.

From inequality \eqref{tightlogsob>2} applied to the function $e^{sf/2}$ (recall that $f$ is $G$-invariant) we obtain
\begin{equation} \label{expint_logsob}
\IntN e^{sf} \log e^{sf} \diff\mu_U - \IntN e^{sf} \diff\mu_U \log \IntN e^{sf} \diff\mu_U \leq C\frac{s^2}{4} \IntN e^{sf} |\nabla f|^2 \diff\mu_U.
\end{equation}
Define $X (s) = \IntN e^{sf} \diff\mu_U$ and hence $ X'(s) = s \IntN f e^{sf} \diff\mu_U$. Using this new notation, inequality \eqref{expint_logsob} becomes
$$ s X'(s) - X(s) \log X(s) \leq aC \frac{s^2}{4} X(s).$$
Here we also used the fact that since $f$ is $a$-Lipschitz, then $|\nabla f| \leq a$ a.e. Letting $Y (s) = \frac{1}{s} \log X(s)$ (with $Y(0) = \int f \diff\mu_U$), this further becomes
$$ Y'(s) \leq \frac{aC}{4}.$$
Integrating this inequality we obtain \eqref{expint_step}.

Multiplying \eqref{expint_step} with $e^{-s^2/(2b^2)}$ we obtain
$$ \int_{-\infty}^\infty\IntN e^{-\frac{s^2}{2b^2}+sf} \diff\mu_U \diff s
\leq \int_{-\infty}^\infty e^{-\frac{s^2}{2b^2} + aC \frac{s^2}{4} + s \int f \diff\mu_U} \diff s.$$
Using Fubini's theorem and computing the integrals with respect to $s$, it follows that
$$ \IntN e^{b^2f^2/2} \diff\mu_U \leq \frac{\sqrt{2}}{\sqrt{2-b^2aC}} \exp \left(\frac{c^2}{2-b^2aC} \left(\IntN f \diff\mu_U \right)^2\right).$$
To conclude the proof in this case, it is enough to check that $f$ is integrable. We refer to the proof of \cite[Proposition 4.4.2]{BGL} for a discussion of this using the Poincar\'e inequality. 

Finally, consider the general case when $f$ is not necessarily $G$-invariant. For any Weyl chamber $H$, let $\restr{f}{H}:H\to \RR$ be the restriction of $f$ to $H$, and let $\tilde{f}_H:\RR^N \to \RR$ be the $G$-invariant function equal to $\restr{f}{H}$ on each Weyl chamber, i.e.,
$$ \tilde{f}_H(\sigma_\alpha x) = \restr{f}{H} (x) \qquad \forall x\in H,\; \forall \alpha\in R_+.$$
Then $\tilde{f}_H$ is also $a$-Lipschitz and so, from the above, we have
$$ |G| \int_H e^{b^2 f^2/2} \diff\mu_U = \IntN e^{b^2\tilde{f}_H^2} \diff\mu_U < \infty.$$
Here in the first equality we used property \eqref{property_Ginv} and the fact that $\tilde{f}_H = f$ on $H$. Finally, we have
$$ \IntN e^{b^2 f^2/2} \diff\mu_U = \sum_H \int_H e^{b^2 f^2/2} \diff\mu_U <\infty,$$
where the sum goes over all the Weyl chambers $H$. This completes the proof.
\end{proof}

As a by-product of this proof, we next obtain a Gaussian measure concentration property.

\begin{cor}
Let $U(x)=|x|^p$ for $p\geq 2$ and consider the probability measure $\diff\mu_U = \frac{1}{Z} e^{-U}\diff \mu_k$. For any $G$-invariant $a$-Lipschitz function $f$ and for any $r\geq 0$ we have
\begin{equation} \label{measconc_eqn} 
\mu_U \left( f\geq \IntN f\diff\mu_U + r \right)
\leq e^{-r^2/(aC)}.
\end{equation}
\end{cor}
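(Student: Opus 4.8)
The plan is to deduce this concentration bound from the estimate \eqref{expint_step} already established in the proof of Theorem \ref{expint_thm}, via the classical exponential Chebyshev (Chernoff) method. First I would reduce to the centred case: replacing $f$ by $f - \IntN f\diff\mu_U$ leaves both the Lipschitz constant $a$ and the $G$-invariance unchanged while shifting the event in \eqref{measconc_eqn} to $\{f\geq r\}$, so it suffices to prove $\mu_U(f\geq r)\leq e^{-r^2/(aC)}$ under the assumption $\IntN f\diff\mu_U = 0$.

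Next, for any $s>0$ I would apply Markov's inequality to the exponentiated variable:
$$\mu_U(f\geq r) = \mu_U\left(e^{sf}\geq e^{sr}\right) \leq e^{-sr}\IntN e^{sf}\diff\mu_U.$$
Since $f$ is $G$-invariant, $a$-Lipschitz, and centred, inequality \eqref{expint_step} gives $\IntN e^{sf}\diff\mu_U \leq \exp\!\left(aCs^2/4\right)$, whence
$$\mu_U(f\geq r) \leq \exp\left(-sr + \frac{aC}{4}s^2\right).$$

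Finally I would optimise the exponent over $s>0$. The quadratic $-sr + \frac{aC}{4}s^2$ is minimised at $s = 2r/(aC)$, where it takes the value $-r^2/(aC)$; substituting this choice yields exactly \eqref{measconc_eqn}. There is no genuine obstacle here, since the entire analytic content is packaged in \eqref{expint_step} and what remains is the standard optimisation step. The only points requiring a little care are the reduction to the centred function (so that the linear term in the exponent of \eqref{expint_step} vanishes) and the restriction $s>0$ in Markov's inequality, which is precisely the regime in which the optimiser $s=2r/(aC)$ lies for $r\geq 0$.
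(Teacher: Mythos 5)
Your proposal is correct and is essentially the paper's own argument: Markov's inequality applied to $e^{sf}$, the bound \eqref{expint_step} from the proof of Theorem \ref{expint_thm}, and optimisation at $s=2r/(aC)$. The only (cosmetic) difference is that you centre $f$ first rather than carrying the $s\int f\diff\mu_U$ term through and letting it cancel, and you are in fact slightly more careful than the paper in restricting to $s>0$ so that Markov's inequality applies in the stated direction.
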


\begin{proof}
By Markov's inequality and \eqref{expint_step} we have, for any $s\in \RR$, 
\begin{align*}
\mu_U \left( f\geq \IntN f\diff\mu_U + r \right) 
&= \mu_U \left( e^{sf} \geq \exp\left(s \IntN f\diff\mu_U + sr\right) \right)
\\
&\leq e^{-s\int f \diff\mu_U -sr} \IntN e^{sf} \diff\mu_U
\leq e^{-sr + aC \frac{s^2}{4}}.
\end{align*}
The right hand side is minimised for $s=\frac{2r}{aC}$, and replacing this in the above inequality we obtain exactly \eqref{measconc_eqn}, as required.
\end{proof}

\begin{remark}
Using exponential integrability we can see that the condition $p\geq 2$ in Theorem \ref{tightlogsobolev1} is necessary. Indeed, assuming that \eqref{logsob>2} holds for some $1<p<2$, then Theorem \ref{expint_thm} can be extended in exactly the same way to this case. In other words, this shows that $e^{b^2f^2/2}$ is integrable if $f$ is a Lipschitz function. In particular, taking $f(x)=|x|$ which is $1$-Lipschitz, we have that
$$ \IntN e^{-|x|^p+b^2|x|^2/2} \diff\mu_k <\infty,$$
for some $b>0$, which contradicts the assumption $p<2$. 
\end{remark}
\subsection{Functional inequalities for singular Boltzmann-Gibbs measures}

As discussed in the introduction, the Dunkl setting allows us to rephrase some functional inequalities related to Boltzmann-Gibbs measures. We exploit this connection here to obtain such applications. The inequalities in this subsection are all stated for the classical gradient $\nabla f$, and the probability measures we consider are supported on the closure of a Weyl chamber $H$, and take the form
\begin{equation} \label{mu_UH}
\diff \mu_{U,H} = \frac{1}{Z_H} \1_{H} e^{-|x|^p} \diff\mu_k,
\end{equation}
where $Z_H=\IntN e^{-|x|^p} \1_H \diff\mu_k$ is a normalising constant, $1_H$ is the indicator function of any Weyl chamber, and $p>1$. 

Firstly, as a corollary of Proposition \ref{weighteddunklpoincare}, we obtain a Poincar\'e inequality for this setting. 

\begin{thm}
Let $p>1$. Let $H$ be any Weyl chamber associated with the root system $R$ and consider the probability measure $\diff\mu_{U,H}$ defined by \eqref{mu_UH}.  We then have the Poincar\'e inequality
\begin{equation} \label{poincare_H_eqn} 
\IntN \left| f-\IntN f \diff\mu_{U,H} \right|^2 \diff\mu_{U,H} \leq \tilde{C} \IntN |\nabla f|^2 \diff\mu_{U,H}
\end{equation}
which holds for all functions $f \in H^1(\mu_{U,H})$, with a constant $\tilde{C}>0$ independent of $f$.
\end{thm}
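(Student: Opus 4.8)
The plan is to reduce this statement, which concerns the classical gradient $\nabla$ and the measure $\mu_{U,H}$ supported on a single Weyl chamber $H$, to the already-established Poincaré inequality of Proposition \ref{weighteddunklpoincare}, which concerns the Dunkl gradient $\nabla_k$ and the full-space measure $\mu_U$. The bridge between the two settings is $G$-invariance. Given $f \in H^1(\mu_{U,H})$, I would introduce the $G$-invariant extension $\tilde f : \RR^N \to \RR$ determined by $\tilde f = f$ on $H$ and $\tilde f(gx) = f(x)$ for $x \in H$ and $g \in G$; this is well-defined precisely because $G$ acts simply transitively on the Weyl chambers. The crucial observation is that for a $G$-invariant function all the difference terms in the Dunkl operators vanish, since $\tilde f(x) = \tilde f(\sigma_\alpha x)$, so that $\nabla_k \tilde f = \nabla \tilde f$ pointwise; moreover, because each $g \in G$ acts as an orthogonal transformation, $|\nabla \tilde f|^2$ is itself $G$-invariant.

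The second ingredient is the decomposition of integrals. Since $w_k$ and $e^{-|x|^p}$ are $G$-invariant, the measure $\mu_U$ is $G$-invariant, and simple transitivity gives $\IntN g \diff\mu_U = |G| \int_H g \diff\mu_U$ for every $G$-invariant $g$; in particular $Z = |G| Z_H$, and hence $\diff\mu_{U,H} = |G| \, \1_H \diff\mu_U$. Applying this to the $G$-invariant functions $\tilde f$, $|\tilde f - m|^2$ and $|\nabla \tilde f|^2$, where $m := \IntN f \diff\mu_{U,H}$, I obtain
$$ \IntN \tilde f \diff\mu_U = \IntN f \diff\mu_{U,H} = m, \qquad \IntN |\tilde f - m|^2 \diff\mu_U = \IntN |f - m|^2 \diff\mu_{U,H},$$
and likewise $\IntN |\nabla_k \tilde f|^2 \diff\mu_U = \IntN |\nabla f|^2 \diff\mu_{U,H}$, using $\nabla_k \tilde f = \nabla \tilde f = \nabla f$ on the interior of $H$. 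Feeding $\tilde f$ into Proposition \ref{weighteddunklpoincare} and rewriting each term through these identities then yields \eqref{poincare_H_eqn}, in fact with $\tilde C = C$ the very same constant, since the factor $|G|$ cancels on the two sides.

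The step I expect to require the most care is verifying that $\tilde f$ genuinely belongs to $H^1_k(\mu_U)$, i.e.\ that its weak Dunkl gradient exists and equals $\nabla f$, because the $G$-invariant extension will in general have a kink in its normal derivative across the chamber walls $\{\alx = 0\}$ and so need not be classically differentiable there. The resolution is that the weight $w_k(x) = \prod_{\alpha\in R_+} |\alx|^{2k_\alpha}$ vanishes on exactly these walls, which suppresses any boundary contribution that the kinks would otherwise produce in the Dunkl integration-by-parts formula; this is the same mechanism already exploited in the proof of Theorem \ref{expint_thm}. Concretely I would either check the weak-derivative identity directly against test functions, exploiting this vanishing, or argue by approximating $f$ in $H^1(\mu_{U,H})$ by functions smooth up to $\partial H$ with vanishing normal derivative, whose extensions are genuinely $C^1$, applying the inequality to each, and passing to the limit.
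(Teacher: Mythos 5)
Your proposal is correct and follows essentially the same route as the paper: extend $f$ to the $G$-invariant function $\tilde f$, note that $\nabla_k\tilde f=\nabla\tilde f$ and that integrals of $G$-invariant functions decompose as $|G|$ copies of the integral over $H$, and then invoke Proposition \ref{weighteddunklpoincare}. The only differences are cosmetic improvements: you observe that $Z=|G|Z_H$, so the mean is preserved exactly and you get the constant $\tilde C=C$ directly, whereas the paper routes through the comparison with an arbitrary constant $\zeta$ (inequality \eqref{poincare_step_H}) and ends with $\tilde C=4C$; and you explicitly flag the membership of $\tilde f$ in $H^1_k(\mu_U)$ across the chamber walls, a point the paper passes over in silence.
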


\begin{proof}
Similarly to \eqref{poincare_step} we obtain
\begin{equation} \label{poincare_step_H}
\IntN \left| f - \IntN f \diff\mu_{U,H} \right|^2 \diff\mu_{U,H}
\leq 4 \IntN |f-\zeta|^2 \diff\mu_{U,H}
\end{equation}
for any $\zeta \in \RR$. 

Let $\restr{f}{H}:H\to \RR$ be the restriction of $f$ to $H$, and let $\tilde{f}:\RR^N \to \RR$ be the $G$-invariant function equal to $\restr{f}{H}$ on each Weyl chamber, i.e.,
\begin{equation} \label{defn_Ginvfct}
\tilde{f}(\sigma_\alpha x) = \restr{f}{H} (x) \qquad \forall x\in H,\; \forall \alpha\in R_+.
\end{equation}
Applying the Poincar\'e inequality \eqref{weighteddunklpoincare_eqn} to the function $\tilde{f}$ we obtain
\begin{equation} \label{poincare_H_step2} 
\IntN \left| \tilde{f}-\IntN \tilde{f} \diff\mu_U \right|^2 \diff\mu_U \leq C \IntN |\nabla_k \tilde{f}|^2 \diff\mu_U.
\end{equation}

Let us note here that for any $G$-invariant function $g$ and any Weyl chamber $H$ we have
\begin{equation} \label{property_Ginv}
\IntN g \diff\mu_k = \sum_{H'} \int_{H'} g \diff\mu_k = |G| \int_{H} g \diff\mu_k,
\end{equation}
where the sum goes over all Weyl chambers $H'$ and recall that $|G|$ is the number of Weyl chambers. Indeed, this is because for any Weyl chamber $H'$ there exists $\alpha\in R$ such that $H'=\sigma_\alpha H$, so by a change of variables $y=\sigma_\alpha x$ we have
$$ \int_{H} g \diff\mu_k = \int_{H'} g \diff\mu_k.$$

Since $\tilde{f}$ is $G$-invariant, we have $\nabla_k f=\nabla f$, and using property \eqref{property_Ginv}, the inequality \eqref{poincare_H_step2} becomes
$$\int_H \left|f-|G| \int_H f \diff\mu_U \right|^2 \diff\mu_U \leq C \int_H |\nabla f|^2 \diff\mu_U.$$
Using now the fact that $\1_H \diff\mu_U = \frac{Z_H}{Z} \diff\mu_{U,H}$, this inequality becomes
$$ \IntN \left| f - |G| \frac{Z_H}{Z} \IntN f \diff\mu_{U,H}\right|^2 \diff\mu_{U,H}
\leq C \IntN |\nabla f|^2 \diff\mu_{U,H}.$$
Taking $\zeta = |G| \frac{Z_H}{Z} \IntN f \diff\mu_{U,H}$ in \eqref{poincare_step_H} together with the previous inequality imply \eqref{poincare_H_eqn} with $\tilde{C}=4C$.
\end{proof}

Similarly, from Theorem \ref{tightlogsobolev1} we obtain a tight log-Sobolev inequality for this setting when $p\geq 2$. 

\begin{thm} \label{logsob_appl}
Let $p\geq 2$. Let $H$ be any Weyl chamber associated with the root system $R$ and consider the probability measure $\diff\mu_{U,H}$ defined by \eqref{mu_UH}. Then there exists a constant $C>0$ such that the inequality 
\begin{equation}
\IntN f^2 \log \frac{f^2}{\int f^2 \diff\mu_{U,H}} \diff\mu_{U,H}
\leq C \IntN |\nabla f|^2 \diff\mu_{U,H}
\end{equation}
holds for all $f\in H^1(\mu_{U,H})$.
\end{thm}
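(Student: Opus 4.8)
The plan is to deduce this tight log-Sobolev inequality from the whole-space version in Theorem \ref{tightlogsobolev1}, proceeding exactly as in the preceding Weyl-chamber Poincar\'e theorem, which was itself deduced from Proposition \ref{weighteddunklpoincare}. Given $f\in H^1(\mu_{U,H})$ defined on the closure of $H$, I would first form its $G$-invariant extension $\tilde f$ as in \eqref{defn_Ginvfct}, so that $\tilde f=f$ on $H$ and $\tilde f$ is determined elsewhere by reflection. Since $\tilde f$ is $G$-invariant, the difference part of each Dunkl operator vanishes and so $\nabla_k\tilde f=\nabla\tilde f$ almost everywhere; moreover both $\tilde f^2$ and $|\nabla\tilde f|^2$ are $G$-invariant.

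Next I would apply the tight inequality \eqref{tightlogsob>2} of Theorem \ref{tightlogsobolev1} to $\tilde f$, giving
$$\IntN \tilde f^2 \log \frac{\tilde f^2}{\int \tilde f^2 \diff\mu_U} \diff\mu_U \leq C \IntN |\nabla_k \tilde f|^2 \diff\mu_U.$$
Both integrands are $G$-invariant (for the left-hand side, note that $\int \tilde f^2\diff\mu_U$ is a constant), and $\mu_U$ is itself $G$-invariant because $e^{-|x|^p}$ depends only on $|x|$. Hence property \eqref{property_Ginv} applies with $\mu_U$ in place of $\mu_k$, turning each whole-space integral into $|G|$ times the corresponding integral over $H$. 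Finally I would pass from $\mu_U$ restricted to $H$ to the probability measure $\mu_{U,H}$ via $\1_H\diff\mu_U=\frac{Z_H}{Z}\diff\mu_{U,H}$, so that $\int_H g\diff\mu_U=\frac{Z_H}{Z}\IntN g\diff\mu_{U,H}$ for any $g$.

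The crucial point --- and the step I would watch most carefully --- is that these reductions leave the inequality \emph{tight}, with no residual $\int f^2\diff\mu_{U,H}$ term. This hinges on the normalisation identity $Z=|G|Z_H$, equivalently $\mu_U(H)=1/|G|$, which holds because the $|G|$ Weyl chambers all carry equal $\mu_U$-mass by $G$-invariance. Consequently the prefactor $|G|\frac{Z_H}{Z}$ equals $1$, so that on the one hand $\int \tilde f^2\diff\mu_U=|G|\int_H f^2\diff\mu_U=\IntN f^2\diff\mu_{U,H}$ --- meaning the normalising denominator inside the logarithm is exactly preserved --- and on the other hand the overall factors multiplying the entropy and the Dirichlet form cancel identically, yielding the tight inequality directly. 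Were this factor not exactly $1$, the homogeneity of the entropy functional would produce a spurious logarithmic $L^2$ term, so the substance of the argument is precisely that it does not appear. The only genuinely technical obstacle is the standard one of verifying that $\tilde f\in H^1_k(\mu_U)$, i.e.\ that the reflected extension introduces no singular contribution to the weak Dunkl gradient along the chamber walls; this follows because $\tilde f$ matches continuously across each reflecting hyperplane and the walls are $\mu_k$-null.
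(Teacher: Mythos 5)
Your proposal is correct and follows the paper's overall strategy --- symmetrize $f$ to the $G$-invariant extension $\tilde f$, apply the whole-space tight inequality \eqref{tightlogsob>2}, and descend to the chamber via \eqref{property_Ginv} and $\1_H\diff\mu_U=\frac{Z_H}{Z}\diff\mu_{U,H}$ --- but it diverges at the final step in a way worth recording. The paper carries through a residual term $\log\bigl(|G|\tfrac{Z_H}{Z}\bigr)\IntN f^2\diff\mu_{U,H}$ and then re-tightens the inequality by invoking Rothaus's lemma together with the Poincar\'e inequality \eqref{poincare_H_eqn}, exactly as in the proof of Theorem \ref{tightlogsobolev1}. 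You instead observe that $e^{-|x|^p}w_k$ is $G$-invariant, so the $|G|$ Weyl chambers carry equal mass and $Z=|G|Z_H$; hence the prefactor $|G|\tfrac{Z_H}{Z}$ equals $1$, the normalising constant inside the logarithm transforms exactly, and the residual term is identically zero. This is a genuine simplification: it makes the Rothaus/Poincar\'e step superfluous here (and, consistently, would also let one take $\zeta=\IntN f\diff\mu_{U,H}$ directly in the chamber Poincar\'e inequality, avoiding the factor $4$ from \eqref{poincare_step_H}). The trade-off is that your shortcut relies on the specific radial form of $U$ and the exact normalisation, whereas the paper's route is robust to any bounded perturbation of the constant. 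You correctly flag the one technical point that must be checked either way, namely that $\tilde f\in H^1_k(\mu_U)$; the continuity of $\tilde f$ across the reflecting hyperplanes (which are $\mu_k$-null) and the standard gluing lemma for Sobolev functions handle this, so no gap remains.
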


\begin{proof}
Consider the $G$-invariant function $\tilde{f}: \RR^N \to \RR$ defined by \eqref{defn_Ginvfct}. Applying the log-Sobolev inequality \eqref{tightlogsob>2} to the function $\tilde{f}$ and using property \eqref{property_Ginv}, we obtain
$$ \int_H f^2 \log \frac{f^2}{\int_H f^2 \diff\mu_U} \diff\mu_U
\leq C \int_H |\nabla f|^2 \diff\mu_U + \log |G| \int_H f^2 \diff\mu_U.$$
Using now the fact that $\1_H \diff\mu_U = \frac{Z_H}{Z} \diff\mu_{U,H}$, this inequality becomes
$$ \IntN f^2 \log \frac{f^2}{\int f^2 \diff\mu_{U,H}} \diff\mu_{U,H}
\leq C \IntN |\nabla f|^2 \diff\mu_{U,H} + \log \left(|G| \frac{Z_H}{Z}\right) \IntN f^2 \diff\mu_{U,H}.$$
To obtain a tight log-Sobolev inequality we use the same method as in the proof of Theorem \ref{tightlogsobolev1}, making use of the Rothaus lemma and the Poincar\'e inequality \eqref{poincare_H_eqn}.
\end{proof}

\begin{example}
Let us consider the case of root system $A_{N-1}$ where we have $R_+ = \{ e_i-e_j | 1\leq i < j \leq N\}$ and one choice of Weyl chamber is $H=\{ x\in \RR^N | x_1 > x_2 > \ldots >x_N \}$. In this case, all roots belong to the same orbit of the reflection group $G=S_N$, so the multiplicity function reduces to a constant, i.e., $k_\alpha = k \geq 0$ for all $\alpha\in R_+$, and $w_k(x)=\prod_{i<j} (x_i-x_j)^{2k}$. Thus, the measure  $\mu_{U,H}$ becomes
$$ \diff \mu_{U,H} = \frac{1}{Z_H} \1_{\{x_1>x_2>\ldots >x_N\}} e^{-|x|^p} \prod_{1\leq i <j \leq N} (x_i-x_j)^{2k}.$$
\end{example}

\begin{example}
In the case of root system $B_N$ we have $R_+ = \{\sqrt{2}e_i | 1 \leq i \leq N \} \cup \{e_i\pm e_j | 1\leq i < j \leq N \}$ and a choice of Weyl chamber is $H=\{ x\in \RR^N | x_1 >x_2 > \ldots >x_N >0\}$. Here, the multiplicity function reduces to two constants, say $k_1,k_2 \geq 0$ (depending on whether the root is of the form $\sqrt{2}e_i$, or $e_i\pm e_j$) and the Dunkl weight becomes $w_k(x) = 2^{k_1N} \prod_{i=1}^N |x_i|^{2k_1} \prod_{i<j} (x_i-x_j)^{2k_2} \prod_{i<j} (x_i+x_j)^{2k_2}$. Thus, the measure \eqref{mu_UH} in this case equals
$$ \diff\mu_{U,H} = \frac{1}{Z_H} \1_{\{x_1>\ldots >x_N >0\}} e^{-|x|^p} \prod_{i=1}^N |x_i|^{2k_1} \prod_{i<j} (x_i-x_j)^{2k_2} \prod_{i<j} (x_i+x_j)^{2k_2}.$$
\end{example}

Finally, we note that we can obtain a $\Phi$-Sobolev inequality in the range $1<p<2$ which complements Theorem \ref{logsob_appl}. The proof of this fact uses Theorem \ref{tightlogsobolev2} and goes along the same lines as above so we omit it here. 

\begin{thm}
Let $1<p<2$ and $s=2\frac{p-1}{p}$. Let $H$ be any Weyl chamber associated with the root system $R$ and consider the probability measure $\diff\mu_{U,H}$ defined by \eqref{mu_UH}. Define 
$$ \Phi(x) = x (\log(x+1))^s.$$
Then there exists a constant $C>0$ such that the inequality 
\begin{equation}
\IntN \Phi(f^2) \diff\mu_{U,H} - \Phi \left( \IntN f^2 \diff\mu_{U,H} \right)
\leq C \IntN |\nabla f|^2 \diff\mu_{U,H}
\end{equation}
holds for all $f\in H^1(\mu_{U,H})$.
\end{thm}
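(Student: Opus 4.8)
The plan is to follow the descent used in the proof of Theorem \ref{logsob_appl}, transporting the global tight $\Phi$-Sobolev inequality on $\RR^N$ (Theorem \ref{tightlogsobolev2}) down to a single Weyl chamber $H$ by symmetrisation, with Theorem \ref{tightlogsobolev2} now playing the role that Theorem \ref{tightlogsobolev1} played there. Given $f\in H^1(\mu_{U,H})$, I would first form the $G$-invariant extension $\tilde{f}:\RR^N\to\RR$ defined by \eqref{defn_Ginvfct}, i.e.\ the function agreeing with $\restr{f}{H}$ on each Weyl chamber. As in the previous proofs, $\tilde{f}$ inherits the Sobolev regularity required to apply Theorem \ref{tightlogsobolev2}; the only delicate point is the behaviour across the reflection hyperplanes, which is harmless because the Dunkl weight $w_k$ vanishes there.

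I would then apply Theorem \ref{tightlogsobolev2} to $\tilde{f}$ to get
\begin{equation*}
\IntN \Phi(\tilde{f}^2)\diff\mu_U - \Phi\left(\IntN \tilde{f}^2 \diff\mu_U\right) \leq C \IntN |\nabla_k\tilde{f}|^2\diff\mu_U.
\end{equation*}
Since $\tilde{f}$ is $G$-invariant, the difference terms in the Dunkl operators cancel, so $\nabla_k\tilde{f}=\nabla\tilde{f}$, and each of $\Phi(\tilde{f}^2)$, $\tilde{f}^2$ and $|\nabla\tilde{f}|^2$ is again $G$-invariant. Because $e^{-|x|^p}$ is radial, hence $G$-invariant, property \eqref{property_Ginv} remains valid with $\mu_k$ replaced by $\mu_U$, so each integral over $\RR^N$ equals $|G|$ times the corresponding integral over $H$, where $\tilde{f}=f$ and $\nabla\tilde{f}=\nabla f$.

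The crux, as I see it, is the bookkeeping of the normalising constants through the \emph{nonlinear} function $\Phi$. Writing $\1_H\diff\mu_U=\frac{Z_H}{Z}\diff\mu_{U,H}$ and observing that $G$-invariance of $e^{-|x|^p}$ forces $Z=\IntN e^{-|x|^p}\diff\mu_k=|G|\int_H e^{-|x|^p}\diff\mu_k=|G|Z_H$, the two prefactors combine into $|G|\cdot\frac{Z_H}{Z}=1$. Consequently $\IntN \Phi(\tilde{f}^2)\diff\mu_U=\IntN\Phi(f^2)\diff\mu_{U,H}$ and $\IntN \tilde{f}^2\diff\mu_U=\IntN f^2\diff\mu_{U,H}$ (so that $\Phi(\IntN\tilde{f}^2\diff\mu_U)=\Phi(\IntN f^2\diff\mu_{U,H})$, with no residual scaling left inside $\Phi$), while $\IntN|\nabla_k\tilde{f}|^2\diff\mu_U=\IntN|\nabla f|^2\diff\mu_{U,H}$. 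Substituting these three identities into the displayed inequality yields the claimed $\Phi$-Sobolev inequality for $\mu_{U,H}$ directly.

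The obstacle one might anticipate is that a multiplicative factor surviving inside $\Phi$ cannot be absorbed the way an additive $\log$ term is in the log-Sobolev setting; but this does not arise precisely because the exact identity $Z=|G|Z_H$ pins the internal scaling factor to $1$. Thus, unlike the passage from Theorem \ref{logsob>2_thm} to Theorem \ref{tightlogsobolev1}, no further appeal to a Rothaus-type lemma or to a Poincar\'e inequality is needed at this stage: tightness is already built into Theorem \ref{tightlogsobolev2}, and the symmetrisation merely transports it to $H$. If one preferred not to invoke the cancellation explicitly, one could instead carry a residual term and absorb it by mimicking the proof of Theorem \ref{tightlogsobolev2}, using the $\Phi$-Rothaus lemma of \cite{LZ} together with the Poincar\'e inequality \eqref{poincare_H_eqn}.
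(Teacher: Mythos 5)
Your proposal is correct and follows the same symmetrisation skeleton that the paper intends (extend $f$ to the $G$-invariant $\tilde f$ via \eqref{defn_Ginvfct}, apply the global tight inequality of Theorem \ref{tightlogsobolev2}, use $\nabla_k\tilde f=\nabla\tilde f$ and \eqref{property_Ginv} to descend to $H$). Where you genuinely diverge is at the end: the paper's ``same lines as above'' points back to the proof of Theorem \ref{logsob_appl}, which carries a residual term $\log\bigl(|G|\tfrac{Z_H}{Z}\bigr)\IntN f^2\diff\mu_{U,H}$ and then removes it with a Rothaus-type lemma plus the Poincar\'e inequality \eqref{poincare_H_eqn}; you instead observe that $G$-invariance of $e^{-|x|^p}$ forces $Z=|G|Z_H$, so the scaling factor inside $\Phi$ is exactly $1$ and no residual term ever appears. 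This observation is correct, and it is genuinely valuable here: an additive $\log$ defect can be absorbed by Rothaus, but a multiplicative defect inside the nonlinear $\Phi$ could not be handled the same way, so your cancellation is arguably the cleanest (perhaps the only painless) way to close the argument — and as a by-product it shows that the residual term in the paper's proof of Theorem \ref{logsob_appl} is in fact zero as well. The one point you should not wave away too quickly is the claim that $\tilde f\in H^1_k(\mu_U)$: the normal derivative of $\tilde f$ generically jumps across the reflection hyperplanes, so one must argue that the distributional gradient has no singular part supported on the walls (which is where the vanishing of $w_k$ there, or at least the fact that the walls are $\mu_k$-null together with an approximation argument, enters); the paper glosses over this in exactly the same way, so your proof is at the same level of rigour, but a complete write-up should address it.
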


\noindent \textbf{Acknowledgements.} The author wishes to thank Boguslaw Zegarlinski for introducing him to the problem and for useful advice. Financial support from EPSRC is also gratefully acknowledged. 


\bibliographystyle{plain}
\bibliography{ref}

\begin{thebibliography}{10}

\bibitem{Anker}
J.-Ph. Anker.
\newblock An introduction to {D}unkl theory and its analytic aspects.
\newblock In {\em Analytic, algebraic and geometric aspects of differential
  equations}, Trends Math., pages 3--58. Birkh\"{a}user/Springer, Cham, 2017.

\bibitem{BGL}
D.~Bakry, I.~Gentil, and M.~Ledoux.
\newblock {\em Analysis and geometry of {M}arkov diffusion operators}, volume
  348 of {\em Grundlehren der Mathematischen Wissenschaften [Fundamental
  Principles of Mathematical Sciences]}.
\newblock Springer, Cham, 2014.

\bibitem{CL}
D.~Chafai and J.~Lehec.
\newblock On {P}oincar{\'e} and logarithmic {S}obolev inequalities for a class
  of singular {G}ibbs measures.
\newblock In {\em Geometric Aspects of Functional Analysis}, volume 2256 of
  {\em Lecture Notes in Mathematics}, pages 219--246. Springer, Cham, 2020.

\bibitem{Davies}
E.~B. Davies.
\newblock {\em Heat kernels and spectral theory}, volume~92 of {\em Cambridge
  Tracts in Mathematics}.
\newblock Cambridge University Press, Cambridge, 1989.

\bibitem{Evans}
L.~C. Evans.
\newblock {\em Partial differential equations}, volume~19 of {\em Graduate
  Studies in Mathematics}.
\newblock American Mathematical Society, Providence, RI, second edition, 2010.

\bibitem{GRY}
P.~Graczyk, M.~R\"{o}sler, and M.~Yor, editors.
\newblock {\em Harmonic and stochastic analysis of {D}unkl processes}.
\newblock Travaux en cour. Herman, Paris, 2008.

\bibitem{Gross}
L.~Gross.
\newblock Logarithmic {S}obolev inequalities.
\newblock {\em Amer. J. Math.}, 97(4):1061--1083, 1975.

\bibitem{GZ}
A.~Guionnet and B.~Zegarli\'{n}ski.
\newblock Lectures on logarithmic {S}obolev inequalities.
\newblock In {\em S\'{e}minaire de {P}robabilit\'{e}s, {XXXVI}}, volume 1801 of
  {\em Lecture Notes in Math.}, pages 1--134. Springer, Berlin, 2003.

\bibitem{HZ}
W.~Hebisch and B.~Zegarli\'{n}ski.
\newblock Coercive inequalities on metric measure spaces.
\newblock {\em J. Funct. Anal.}, 258(3):814--851, 2010.

\bibitem{LZ}
P.~\L{}ugiewicz and B.~Zegarli\'{n}ski.
\newblock Coercive inequalities for {H}\"{o}rmander type generators in infinite
  dimensions.
\newblock {\em J. Funct. Anal.}, 247(2):438--476, 2007.

\bibitem{Rosler}
M.~R\"{o}sler.
\newblock Dunkl operators: theory and applications.
\newblock In {\em Orthogonal polynomials and special functions ({L}euven,
  2002)}, volume 1817 of {\em Lecture Notes in Math.}, pages 93--135. Springer,
  Berlin, 2003.

\bibitem{Rothaus}
O.~S. Rothaus.
\newblock Analytic inequalities, isoperimetric inequalities and logarithmic
  {S}obolev inequalities.
\newblock {\em J. Funct. Anal.}, 64(2):296--313, 1985.

\bibitem{vDV}
J.~F. van Diejen and L.~Vinet, editors.
\newblock {\em Calogero-{M}oser-{S}utherland models}, CRM Series in
  Mathematical Physics. Springer-Verlag, New York, 2000.

\bibitem{Vel19}
A.~Velicu.
\newblock Hardy-type inequalities for {D}unkl operators with applications to
  many-particle {H}ardy inequalities.
\newblock {\em Communications in Contemporary Mathematics}, 2020.

\bibitem{V}
A.~Velicu.
\newblock Sobolev-type inequalities for {D}unkl operators.
\newblock {\em J. Funct. Anal.}, 279(7), 2020.

\end{thebibliography}

\end{document}